\titleformat*{\section}{\normalsize\bfseries}
\titleformat*{\subsection}{\normalsize\itshape}
\numberwithin{equation}{section}
\theoremstyle{remark}
\newtheorem{remark}{Remark}[section]
\theoremstyle{definition}
\newtheorem{definition}{Definition}[section]
\newtheorem{proposition}{Proposition}[section]
\newtheorem{corollary}{Corollary}[section]
\newtheorem{theorem}{Theorem}[section]
\newcommand{\f}[1]{\pmb{#1}}
\DeclareMathOperator{\N}{\mathbb{N}}
\DeclareMathOperator{\R}{\mathbb{R}}
\DeclareMathOperator{\C}{\mathcal{C}}
\DeclareMathOperator{\F}{\mathcal{F}}
\DeclareMathOperator{\AC}{\mathcal{AC}}
\DeclareMathOperator{\V}{\f H^1_{0,\sigma}}
\DeclareMathOperator{\Vd}{(\f H^{1}_{0,\sigma})^*}
\DeclareMathOperator{\Ha}{\f L^2_{\sigma}}
\DeclareMathOperator{\Hb}{\f{H}^1_0}
\DeclareMathOperator{\Hc}{\f H^2}
\DeclareMathOperator{\Hd}{\f H^{-1}}
\DeclareMathOperator{\He}{\f{H}^1}
\DeclareMathOperator{\Le}{\f{L}^2}
\DeclareMathOperator{\Hf}{\f{H}^3}
\DeclareMathOperator{\Hg}{\f{H}^4}
\DeclareMathOperator{\Hi}{\f{H}^2}
\newcommand{\Hrand}[1]{{\f H^{\nicefrac{#1}{2}}(\partial\Omega)}}
\newcommand{\rot}[1]{[ #1 ]_{\f X}}
\newcommand{\rott}[1]{[ #1 ]_{-\f X}}
\DeclareMathOperator{\Sr}{\mathbb{E}}
\DeclareMathOperator{\Se}{\mathbb{S}}
\DeclareMathOperator{\M}{\mathcal{M}}
\DeclareMathOperator{\ra}{\rightarrow}
\DeclareMathOperator{\de}{\text{d}}
\DeclareMathOperator{\tr}{tr}
\DeclareMathOperator{\spa}{span}
\DeclareMathOperator{\sgn}{sgn}
\DeclareMathOperator{\esssup}{ess\,sup}
\newcommand{\dreidots}{\text{\,\multiput(0,-2)(0,2){3}{$\cdot$}}\,\,\,\,}
\newcommand{\dreidotkom}{\text{\,\multiput(0,0)(0,2){2}{$\cdot$}\put(0,0){,}}\,\,\,\,}
\newcommand{\ft}[1]{{\tilde{\mathbf{ #1}}}}
\newcommand{\pat}[2]{\frac{\partial #1}{\partial #2}} 
\DeclareMathOperator{\di}{\nabla \cdot}
\newcommand{\ov}[1]{\overline{#1}}
\DeclareMathOperator{\curl}{\nabla \times}
\newcommand{\intt}[1]{\int_{0}^T\left({ #1}\right) \de t}
\newcommand{\intte}[1]{\int_{0}^T{ #1} \de t}
\newcommand{\intet}[1]{\int_{\Omega}{ #1} \de \f x}
\renewcommand{\ll}[1]{\langle\hspace{-0.75mm}\langle{#1}\rangle\hspace{-0.75mm}\rangle}
\newcommand{\ftn}[1]{\tilde{\f {{#1}}}_{n,\delta}}
\newcommand{\fnt}[1]{\tilde{\f {{#1}}}_{n,\delta}}
\newcommand{\fdk}[1]{\f {{#1}}_{\delta_k}}
\newtheorem{rem}{Remark}
\newtheorem{defi}{Definition}
\DeclareMathOperator{\sym}{{sym}}
\DeclareMathOperator{\Lap}{\Delta}
\DeclareMathOperator{\Lapp}{\Delta^2}
\DeclareMathOperator{\skw}{skw}
\newcommand{\sy}[1]{(\nabla \f #1)_{{\sym}}}
\newcommand{\sk}[1]{(\nabla \f #1)_{\skw}}
\renewcommand{\t}{\partial_t  }
\newcommand{\syn}[1]{(\nabla \fn {#1})_{\sym}}
\newcommand{\skn}[1]{(\nabla \fn {#1})_{\skw}}
\newcommand{\syd}[1]{(\nabla \fd {#1})_{\sym}}
\newcommand{\skd}[1]{(\nabla \fd {#1})_{\skw}}
\newcommand{\fn}[1]{\f {{#1}}_{n,\delta}}
\newcommand{\fd}[1]{\f {{#1}}_{\delta}}
\author{%
\textsc{ Robert Lasarzik}\thanks{%
        Technische Universit\"{a}t Berlin,
        Institut f\"{u}r Mathematik,
        Stra{\ss}e des 17.~Juni 136,
        10623 Berlin, Germany
        \newline{\tt lasarzik@math.tu-berlin.de}
        }}
\title{
\begin{Large}
\textbf{Measure-valued solutions to the Ericksen--Leslie model equipped with the Oseen--Frank energy}\end{Large}\footnote{This work was funded by CRC 901 {\em Control of self-organizing nonlinear systems: Theoretical methods and concepts of application} (Project A8)\/.
}}
\begin{document}
\markboth{Measure-valued solutions to the Ericksen--Leslie model}{R.~Lasarzik}
\date{Version \today}
\maketitle
\begin{abstract}
\normalsize
In this article, we prove the existence of measure-valued solutions to the Ericksen--Leslie system equipped with the Oseen--Frank energy. We introduce the concept of generalized gradient Young measures. Via a Galerkin approximation, we show the existence of weak solutions to a regularized system and attain measure-valued solutions for vanishing regularization. 
Additionally, it is shown that the measure-valued solution fulfills an energy inequality.
\newline
\newline
{\em Keywords:
Liquid crystal,
Ericksen--Leslie equation,
Existence,
Measure-valued solution,
generalized Young measure
}
\newline
{\em MSC (2010): 35Q35, 35K65, 76A15
}
\end{abstract}
\tableofcontents
\section{Introduction}\label{sec:int}
Nonlinear partial differential equations require generalized solution concepts. In this context, the concept of \textit{Young measure-valued solutions} was first introduced by Tartar~\cite{tartar}. Later on, the concept of \textit{generalized Young measures} was used by DiPerna and Majda~\cite{DiPernaMajda} to define generalized solutions to the Euler equations.  	
These generalized Young measures capture oscillation and concentration effects for sequences bounded in $L^1$. Another step in the analysis of such sequences and their limits under nonlinear functions has been achieved by Alibert and Bouchitt{\'e}~\cite{alibert} who observed that concentrations can only occur almost everywhere.
In the article at hand, we further generalize these concepts to prove global existence of \textit{measure-valued solutions} to the \textit{Ericksen--Leslie system} describing \textit{nematic liquid crystal flow}.

Nematic liquid crystals are anisotropic fluids. They consist of rod-like molecules that build or are dispersed in a fluid and are directionally ordered. This ordering and its direction heavily influences the properties of the material such as light scattering or flow behavior. This gives rise to many applications, among which \textit{liquid crystal displays} are only the most prominent one. The Ericksen--Leslie model is a generally accepted model to describe nematic liquid crystals. The direction of the aligned molecules is modeled by a unit-vector field and the fluid flow by a 
velocity field. Since this model has been proposed  in the 60s by Ericksen~\cite{Erick2} and Leslie~\cite{leslie}, it has been extensively studied. Nevertheless, the global mathematical existence theory is restricted to simple quadratic free energies. 

In this article, we propose a remedy by introducing a new concept of solutions, the so-called {measure-valued solutions}. This is a rather weak notion of solutions, but in~\cite{weakstrong}, we show that the presented solutions enjoy the \textit{weak-strong uniqueness} property. They coincide with the local strong solution as long as the latter exists. Thus, the concept of measure-valued solutions is a natural generalization of the classical strong solutions. 

The first mathematical analysis of  a simplified Ericksen--Leslie model is due to Lin and Liu~\cite{linliu1}. They show global existence of weak solutions and local existence of strong solutions. Additionally, they manage to generalize these results to a more realistic model~\cite{linliu3}. They also show partial regularity of weak solutions to the considered system~\cite{linliu2}. Following this work, there have been many articles considering slightly more complicated models, for example~\cite{prohl},~\cite{allgemein},~or~\cite{isothermal}. Nevertheless to the best of the author's knowledge, the only generalization with respect to the free energy potential is   performed by Emmrich and the author in~\cite{unsere}. 

There are also results on the \textit{local existence of solutions} to the full Ericksen--Leslie model,  see~\cite{localin3d},~\cite{recent} or~\cite{Pruess2}. Especially, local strong solutions are known to exist to different simplifications of the system considered in this article.
The full (thermodynamically consistent) Ericksen--Leslie system equipped with the Dirichlet energy
is considered in~\cite{Pruess2}, whereas the simplified Ericksen--Leslie system with the full Oseen--Frank energy is studied in~\cite{localin3d} as well as in~\cite{localoseen}. 
Since finite time singularities in nematic liquid crystals have been observed  experimentally~\cite{singul2} and analytically~\cite{blow}, it seems appropriate to investigate a weakened solution concept such as measure-valued solutions.

We also want to mention the article by Brenier, De Lellis and Sz{\'e}kelyhidi~\cite{weakstrongeuler} showing the weak-strong uniqueness of measure-valued solutions to the Euler equation, because the techniques introduced there can be transferred to the setting presented here to show additional properties of the limiting measures, as well as the weak-strong uniqueness in~\cite{weakstrong}.

\subsection{Outline of the paper}
In this paper, we study the Ericksen--Leslie model in three dimensions equipped with the Oseen--Frank free energy. 
This energy is not convex and the existence theory is non-standard and involves generalized gradient Young measures.
Already Leslie
suggests to equip the model with the Oseen--Frank energy. It can be seen as the physically most relevant free energy function.

The paper is organized as follows: In Section~\ref{sec:not}, we collect some notation. Section~\ref{sec:model} contains the model, the definition of generalized solutions, and the main results.
In Section~\ref{sec:young}, we introduce the concept of generalized gradient Young measures and prove the associated main theorem. While Section~\ref{sec:weak} is devoted to the proof of existence of weak solutions to the regularized system, Section~\ref{sec:convmeas} shows the convergence of these weak solutions to measure-valued solutions for vanishing regularization. In the last section (Section~\ref{sec:add}), we  show additional properties of the measure-valued solutions such as additional strong convergences of the norm of the director as well as an energy inequality. The energy inequality is a necessary tool to obtain the weak-strong uniqueness of solutions.


\subsection{Notation\label{sec:not}}
Vectors of $\R^3$ are denoted by bold small Latin letters. Matrices of $\R^{3\times 3}$ are denoted by bold capital Latin letters. We also use tensors of higher order, which are denoted by bold capital Greek letters.
Moreover, numbers are denoted be small Latin or Greek letters, and capital Latin letters are reserved for potentials.
The euclidean scalar product in $\R^3$ is denoted by a dot, $ \f a \cdot \f b : = \f a ^T \f b = \sum_{i=1}^3 \f a_i \f b_i$  for $ \f a, \f b \in \R^3$ and the Frobenius product in $\R^{3\times 3}$ by a colon $ \,\f A: \f B:= \tr ( \f A^T \f B)= \sum_{i,j=1}^3 \f A_{ij} \f B_{ij}$ for $\f A , \f B \in \R^{3\times 3}$.
Additionally, the scalar product in the space of tensors of order three is denoted by three dots, 
\begin{align*}
\f \Upsilon \dreidots\, \f \Gamma : =\left [ \sum_{j,k,l=1} ^3 \f \Upsilon_{jkl} \f \Gamma_{jkl}\right ], \quad    \f \Upsilon \in \R^{3\times 3 \times 3 },  \, \f \Gamma \in \R^{3\times 3 \times 3}  .
\end{align*}
The associated norms are all denoted by $| \cdot |$, as well as the norms of tensors of higher order, 
\begin{align*}
| \f \Lambda|^2 := \sum_{i,j,k,l=1}^3 
\f \Lambda_{ijkl}^2\,\quad\text{for }\f \Lambda \in \R^{3^4} \quad 
\text{and }\quad| \f \Theta |^2  := \sum_{i,j,k,l,m,n=1}^3 \f \Theta ^2_{ijklmn}\,\quad\text{for } 
\f \Theta \in \R^{3^6}\,
\end{align*}
respectively.
Similar, we define the products of tensors of different order.
The product of a tensor of third order and a matrix and a  vector is defined by
\begin{align*}
\f \Gamma : \f A := \left [ \sum_{j,k=1}^3 \f \Gamma_{ijk}\f A_{jk}\right ]_{i=1}^3\, ,  \,   \f \Gamma \cdot \f A := \left [ \sum_{k=1}^3 \f \Gamma_{ijk}\f A_{kl}\right ]_{i,j,l=1}^3\, ,  \,    \f \Gamma \cdot \f a  := \left [ \sum_{k=1}^3 \f \Gamma_{ijk}\f a_k\right ]_{i,j=1}^3 \, , \, \f \Gamma\in \R^{3 \times 3\times 3 } , \, \f A \in \R^{3\times 3},\,\f a \in \R^3 .
\end{align*}
The product of a tensor of fourth order with a matrix and a vector is defined by
\begin{align*}
\f \Lambda : \f A : =\left [ \sum_{k,l=1} ^3 \f \Lambda_{ijkl} \f A_{kl}\right ]_{i,j=1}^3\,, \, 
\f \Lambda : \f a : =\left [ \sum_{l=1} ^3 \f \Lambda_{ijkl} \f a_{l}\right ]_{i,j,k=1}^3\,, 
 \,    \f \Lambda \in \R^{3^4 },  \,  \f A \in \R^{3\times 3 } \, 
 \f a 
 \in 
 \R^3 .
\end{align*}
The product of tensors of fourth and third order is given by
\begin{align*}
\f \Lambda  : \f \Gamma : =\left [ \sum_{k,l=1} ^3 \f \Lambda_{ijkl} \f \Gamma _ {klm}\right ]_{i,j,m=1}^3 , \, \f \Lambda  \dreidots \f \Gamma : =\left [ \sum_{j,k,l=1} ^3 \f \Lambda_{ijkl} \f \Gamma_{jkl}\right ]_{i=1}^3, \,    \f \Lambda \in \R^{3^4},  \, \f  \Gamma \in \R^{3\times 3\times 3}  .
\end{align*}
The product of a tensor of fourth order and a matrix or a tensor of third order is defined via
\begin{align*}
 \f A : \f \Theta : ={}& \left [ \sum_{i,j=1} ^3 \f A_{ij} \f \Theta_{ijklmn}  \right ]_{k,l,m,n=1}^3 , \, \f \Theta \dreidots \f \Gamma : = \left [ \sum_{l,m,n=1} ^3 \f \Theta_{ijklmn} \f \Gamma_{lmn}\right ]_{i,j,k=1}^3 ,  \,  
  \f \Theta \in \R^{3^6},\f A \in \R^{3\times 3} ,\f \Gamma \in \R^{3\times 3 \times 3}\,.
\end{align*}
The product of a vector and a tensor of fourth order is defined differently. The definition is adjusted to the cases of this work:
 \begin{align*}
\f a \cdot \f \Theta :={}& \left [ \sum_{k=1} ^3 \f a_{k} \f \Theta_{ijklmn}  \right ]_{i,j,l,m,n=1}^3,
\, 
  \f \Theta \in \R^{3^6},\f a \in \R^{3} \,.
\end{align*}
The standard matrix and matrix-vector multiplication is written without an extra sign for bre\-vi\-ty,
$$\f A \f B =\left [ \sum _{j=1}^3 \f A_{ij}\f B_{jk} \right ]_{i,k=1}^3 \,, \quad  \f A \f a = \left [ \sum _{j=1}^3 \f A_{ij}\f a_j \right ]_{i=1}^3\, , \quad  \f A \in \R^{3\times 3},\,\f B \in \R^{3\times3} ,\, \f a \in \R^3 .$$
The outer vector product is given by
 $\f a \otimes \f b := \f a \f b^T = \left [ \f a_i  \f b_j\right ]_{i,j=1}^3$ for two vectors $\f a , \f b \in \R^3$ and by $ \f A \otimes \f a := \f A \f a ^T = \left [ \f A_{ij}  \f a_k\right ]_{i,j,k=1}^3 $ for a matrix $ \f A \in \R^{3\times 3} $ and a vector $ \f a \in \R^3$. 
The symmetric and skew-symmetric parts of a matrix are given by 
$\f A_{\sym}: = \frac{1}{2} (\f A + \f A^T)$ and 
$\f A _{\skw} : = \frac{1}{2}( \f A - \f A^T)$, respectively ($\f A \in \R^{3\times  3}$).
For the product of two matrices $\f A, \f B \in \R^{3\times 3 }$, we observe
 \begin{align*}
 \f A: \f B = \f A : \f B_{\sym}\,, \quad \text{if } \f A^T= \f A\quad \text{and}\quad
  \f A: \f B = \f A : \f B_{\skw}\,, \quad \text{if } \f A^T= -\f A\, .
 \end{align*}
Furthermore, it holds $\f A^T\f B : \f C = \f B : \f A \f C$ for
$\f A, \f B, \f C \in \R^{3\times 3}$ and
$ \f a\otimes \f b : \f A = \f a \cdot \f A \f b$ for
$\f a, \f b \in \R^3$, $\f A \in \R^{3\times 3 }$  and hence $ \f a \otimes \f a : \f A = \f a \cdot \f A \f a =  \f a \cdot \f A_{\sym} \f a$.

We use  the Nabla symbol $\nabla $  for real-valued functions $f : \R^3 \to \R$, vector-valued functions $ \f f : \R^3 \to \R^3$ as well as matrix-valued functions $\f A : \R^3 \to \R^{3\times 3}$ denoting
\begin{align*}
\nabla f := \left [ \pat{f}{\f x_i} \right ] _{i=1}^3\, ,\quad
\nabla \f f  := \left [ \pat{\f f _i}{ \f x_j} \right ] _{i,j=1}^3 \, ,\quad
\nabla \f A  := \left [ \pat{\f A _{ij}}{ \f x_k} \right ] _{i,j,k=1}^3\, .
\end{align*}
 The divergence of a vector-valued and a matrix-valued function is defined by
\begin{align*}
\di \f f := \sum_{i=1}^3 \pat{\f f _i}{\f x_i} = \tr ( \nabla \f f)\, , \quad  \di \f A := \left [\sum_{j=1}^3 \pat{\f A_{ij}}{\f x_j}\right] _{i=1}^3\, .
\end{align*}

Throughout this paper, let $\Omega \subset \R^3$ be a bounded domain of class $\C^{3,1}$.
We rely on the usual notation for spaces of continuous functions, Lebesgue and Sobolev spaces. Spaces of vector-valued functions are  emphasized by bold letters, for example
$
\f L^p(\Omega) := L^p(\Omega; \R^3)$,
$\f W^{k,p}(\Omega) := W^{k,p}(\Omega; \R^3)$.
The standard inner product in $L^2 ( \Omega; \R^3)$ is just denoted by
$ (\cdot \, , \cdot )$, in $L^2 ( \Omega ; \R^{3\times 3 })$
by $(\cdot ; \cdot )$, and in $L^2 ( \Omega ; \R^{3\times 3\times 3 })$ by   $(\cdot \dreidotkom \cdot )$.

The space of smooth solenoidal functions with compact support is denoted by $\mathcal{C}_{c,\sigma}^\infty(\Omega;\R^3)$. By $\f L^p_{\sigma}( \Omega) $, $\V(\Omega)$,  and $ \f W^{1,p}_{0,\sigma}( \Omega)$, we denote the closure of $\mathcal{C}_{c,\sigma}^\infty(\Omega;\R^3)$ with respect to the norm of $\f L^p(\Omega) $, $ \f H^1( \Omega) $, and $ \f W^{1,p}(\Omega)$, respectively.
We denote the Dirichlet-trace by $\f \gamma_0$.

The dual space of a Banach space $V$ is always denoted by $ V^*$ and equipped with the standard norm; the duality pairing is denoted by $\langle\cdot, \cdot \rangle$. The duality pairing between $\f L^p(\Omega)$ and $\f L^q(\Omega)$ (with $1/p+1/q=1$), however, is denoted by $(\cdot , \cdot )$, $( \cdot ; \cdot )$, or $( \cdot \dreidotkom \cdot )$. The dual of $\f H^1_0$ is denoted by $\f H^{-1}$.

The unit ball in d dimensions is denoted by $B_d:= \{ \f x \in \R^d ; | \f x | \leq 1\}$ and the sphere in $d$ dimensions by  $\Se^{d-1}:= \{ \f x \in \R^d ; | \f d |=1  \}$.
We also use the sphere with radius $\nicefrac{1}{2}$, $\Se^{d-1}_{\nicefrac{1}{2}}$.

For $Q\subset \R^d$, the Radon measures are denoted by $\mathcal{M}(Q)$, the positive Radon measures by $\mathcal{M}^+(Q)$, and probability measures by $\mathcal{P}(Q)$. We recall that the Radon measures equipped with the total variation are a Banach space and  for compact sets Q, it can be characterized by~$\mathcal{M}(Q) = ( \C(Q))^*$ (see~\cite[Theorem~4.10.1]{edwards}). $\C_b(Q)$ are all bounded continuous functions on the set $Q$.
The integration of a function $f\in \C(Q)$ with respect to a measure $\mu\in \mathcal{M}(Q)$ is denoted by $ \int_Qf(\f h ) \mu(\de \f h)\,.$ In case of the Lebesgue measure we just write 
$ \int_Qf(\f h ) \de \f h\,.$

The cross product of two vectors is denoted by $\times $. We introduce the notation $ \rot{\cdot}$, which is defined via
\begin{align}
\rot{\cdot } : \R^d \ra \R^{d\times d}\, , \quad \rot{ \f h} := \begin{pmatrix}
0& - \f h_3 &\f h_2\\
\f h_3 & 0 & - \f h_1 \\
- \f h_2 & \f h_1 & 0
\end{pmatrix}\, .
\end{align}
The mapping $\rot{\cdot}$ has some nice properties, for instance
\begin{align*}
\rot{\f a}\f b = \f a \times \f b \, ,\quad \rot{\f a} ^T \rot{\f b} = (\f a \cdot \f b) I - \f b \otimes \f a\, 
\end{align*}
for all $\f a$, $\f b \in \R^3$, where $I$ denotes the identity matrix in $\R^{3\times 3}$, or 
\begin{align*}
 \quad \rot{\f a} : \nabla \f b = \rot{\f a} : \sk b = \f a \cdot \curl \f b \, , \quad \di \rot{ \f a} = - \curl \f a \, , \quad \frac{1}{2} \rot{\curl \f a} = \sk a \,
\end{align*}
for all $ \f a, \f b \in \C^1(\ov \Omega)$.

Additionally, we define $ \rott{\cdot } : \R^{3 \times 3} \ra \R^3 $, which is the left inverse of $\rot{\cdot} $ and given by
\begin{align*}
\rott{\f A} : = \begin{pmatrix}
\f A_{3,2} \\ \f A_{1,3} \\ \f A_{2,1}
\end{pmatrix}\,\quad \text{for all } \f A \in \R^{3\times 3}\,.
\end{align*} 
It holds $ \rott{\rot{\f a}} = \f a $ and hence $ 2 \rott{\sk{ a}}= \curl \f a$ for all $ \f a \in \C^1(\ov{ \Omega }; \R^3)$. 

We also use the Levi--Civita tensor $\f \Upsilon$. Let $\mathfrak{S}_3$ be the symmetric group of all permutations of $(1,2,3)$. The sign of  a given permutation  $\sigma \in \mathfrak{S}_3$ is denoted by $\sgn \sigma $.
The Tensor~$\f \Upsilon$ is defined via
\begin{align*}
\f \Upsilon_{ijk}:= \begin{cases}
\sgn{\sigma},  &  ( i ,j,k) = \sigma( 1,2,3)\text{ with } \sigma\in \mathfrak{S}_3 ,\\ 
0, & \text{ else}\, .
\end{cases}
\end{align*}
This tensor allows it two write the cross product as 
\begin{align*}
(\f a \times \f b)_i = \left (\f \Upsilon :(\f a \otimes \f b)\right )_i =\f \Upsilon _{ijk} \f a_j \f b _k\, \quad \text{for all }\f a , \f b \in \R^d \, 
\end{align*}
and the curl via
\begin{align*}
(\curl \f d)_i = \f\Upsilon_{ijk} \partial_j \f d_k \, \quad\text{for all }\f d \in \C^1 ( \Omega)\, .
\end{align*}

For a given Banach space $ V$, Bochner--Lebesgue spaces are denoted  by $ L^p(0,T; V)$. Moreover,  $W^{1,p}(0,T; V)$ denotes the Banach space of abstract functions in $ L^p(0,T; V)$ whose weak time derivative exists and is again in $ L^p(0,T; V)$ (see also
Diestel and Uhl~\cite[Section~II.2]{diestel} or
Roub\'i\v{c}ek~\cite[Section~1.5]{roubicek} for more details).
We often omit the time interval $(0,T)$ and the domain $\Omega$ and just write, e.g., $L^p(\f W^{k,p})$ for brevity.

Finally, by $c>0$, we denote a generic positive constant.

\section{Model and main results\label{sec:model}}
\subsection{Governing equations}
We consider the Ericksen--Leslie model as introduced in~\cite{unsere} with the constant $\gamma $  set to one. 
Additionally, the evolution equation of the director is restricted onto the unit sphere by taking the whole equation in the cross product with the director itself (compare~\cite{recent}). 
 The governing equations  read as
\begin{subequations}\label{eq:strong}
\begin{align}
\t {\f v}  + ( \f v \cdot \nabla ) \f v + \nabla p + \di \f T^E- \di  \f T^L&= \f g, \label{nav}\\
\f d \times \left (\t {\f d }+ ( \f v \cdot \nabla ) \f d -\sk{v}\f d + \lambda \sy{v} \f d + \f q\right ) & =0,\label{dir}\\
\di \f v & = 0,
\\
| \f d |&=1.
\end{align}%

We recall that $\f v : \ov{\Omega}\times [0,T] \ra \R^3$ denotes the velocity  of the fluid, $\f d:\ov{\Omega}\times[0,T]\ra \R^3$ represents the orientation of the rod-like molecules, and $p:\ov{\Omega}\times [0,T] \ra\R$ denotes the pressure.
The Helmholtz free energy potential~$F$, which is described rigorously in the next section, is assumed to depend on the director and its gradient, $F= F( \f d, \nabla \f d)$.
The free energy functional~$\mathcal{F}$  is defined by
\begin{align*}
\mathcal{F}: \f H^{\nicefrac{5}{4}} \ra \R , \quad \mathcal{F}(\f d)= \int_{\Omega} F( \f d, \nabla \f d) \de \f x \,,
\end{align*}
and $\f q$ is its variational derivative (see Furihata and Matsuo~\cite[Section 2.1]{furihata}),
\begin{align}\label{qdefq}
\f q =\frac{\delta \mathcal{F}}{\delta \f d}(\f d) =  \pat{F}{\f d}(\f d , \nabla\f d)-\di \pat{F}{\nabla \f d}(\f d, \nabla \f d)\, .
\end{align}
The Ericksen stress tensor $\f T^E$ is given by
\begin{equation}
\f T^E = \nabla \f d^T \pat{F}{\nabla \f d}( \f d , \nabla\f d ) \, ,\label{Erik}
\end{equation}
and the Leslie stress tensor by
\begin{align}
\begin{split}
\f T^L ={}&  \mu_1 (\f d \cdot \sy{v}\f d )\f d \otimes \f d +\mu_4 \sy{v}
 + {(\mu_5+\mu_6)} \left (  \f d \otimes\sy{v}\f d \right )_{\sym}
\\
& +{(\mu_2+\mu_3)} \left (\f d \otimes \f e  \right )_{\sym}
 +\lambda \left ( \f d \otimes \sy{v}\f d  \right )_{\skw} + \left (\f d \otimes \f e  \right )_{\skw}\, ,
\end{split}\label{Leslie}
\end{align}
where
\begin{align}
\f e : = \t {\f d} + ( \f v \cdot \nabla ) \f d - \sk v\f d\, .\label{e}
\end{align}
To ensure the dissipative character of the system, we assume that
\begin{align}
\begin{gathered}
\mu_1  > 0, \quad \mu_4 > 0, \quad (\mu_5+\mu_6)- \lambda (\mu_2+\mu_3)>0    \, ,
\\
4  \big( (\mu_5+\mu_6)- \lambda (\mu_2+\mu_3)\big)>
\big((\mu_2+\mu_3) -\lambda\big)^2\,.
\end{gathered}\label{con}
\end{align}
The case $\mu_1 =0$ simplifies the system and can thus be handled similar, but somehow simpler.
If Parodi's relation
\begin{equation}
\lambda = \mu_2+\mu_3\label{parodi}
\end{equation}
 is assumed to hold, the second line of~\eqref{con} is trivially fulfilled.
It can be derived from the Onsager reciprocal relation.  
This relation is only needed in this article to show that a certain energy inequality holds for the measure-valued solution. It is not needed for the existence of measure-valued solutions. 
Nevertheless, the announced weak-strong uniqueness result only holds for solutions fulfilling the energy inequality.

Finally, we impose boundary and initial conditions as follows:
\begin{align}
\f v(\f x, 0) &= \f v_0 (\f x) &\text{for } \f x \in \Omega ,
&&\qquad\f v (  \f x, t ) &= \f 0  &\text{for }( t,  \f x ) \in [0,T] \times \partial \Omega ,\notag \\
\f d (  \f x, 0 ) & = \f d_0 ( \f x) &\text{for } \f x \in \Omega ,
&&\qquad\f d (  \f x ,t ) & = \f d_1 ( \f x )  &\text{for }( t,  \f x ) \in [0,T] \times \partial \Omega .
\label{anfang}
\end{align}
\end{subequations}
We shall later assume that $\f d_1= \f d_0$ on $\partial \Omega$, which is a compatibility condition providing regularity.

\subsection{The general Oseen--Frank energy}
The aim of this article is to provide a global solution concept for the Ericksen--Leslie model equipped with the Oseen--Frank energy, where the emphasis lies on the latter part. The Oseen--Frank energy was already considered by Leslie~\cite{leslie} and can be seen as the energy with the most physical relevance. Nevertheless, there is to the best of the author's knowledge no global mathematical solution concept available for this energy.

The \textit{Oseen--Frank} free energy potential is given by~(see~Leslie~\cite{leslie}) 
\begin{align*}
F(\f d , \nabla \f d) := \frac{K_1}{2} (\di \f d )^2 +\frac{K_2}{2}( \f d \cdot \curl \f d )^2  + \frac{K_3}{2} |\f d \times \curl \f d|^2 \,,
\end{align*}
where $K_1,K_2,K_3>0$.
This energy can be reformulated using the norm one restriction to
\begin{align}
\begin{split}
2 F( \f d , \nabla \f d)&:= k_1 ( \di \f d) ^2 +  k_2 | \curl \f d |^2 + k_3 | \f d |^2 ( \di \f d )^2 +   k_4 ( \f d\cdot \curl \f d )^2  +  k_5 | \f d \times \curl \f d |^2 \, ,
\end{split} \label{frei}
\end{align} 
where $ k_1=k_3=K_1/2$, $k_2={\min\{K_2,K_3\}}/{2}$, $k_4 = K_2-k_2$, and $ k_5 =K_3-k_2$ are again positive constants.
We remark that $| \f d |^2| \curl \f d |^2 = ( \f d \cdot \curl \f d )^2 + | \f d \times \curl \f d |^2 $. 

In Section~\ref{sec:add}, we use another reformulation. Setting $k:= \min\{ K_1/2,K_2/2,K_3/2\}$, $k_3=K_1-k$, $k_4=K_2-k$, as well as $k_5=K_3-k$, we get the formulation~\eqref{frei} with $k_1=k_2=k$. 
With some vector analysis one gets $ | \nabla \f d |^2 = ( \di \f d)^2 + | \curl \f d |^2 + \tr (\nabla \f d^2) - (\di \f d )^2 $, where the last two terms can be written in divergence form 
$$ \tr (\nabla \f d^2) - (\di \f d )^2 = \di ( \nabla \f d \f d - ( \di \f d )\f d ) $$
and hence this term is prescribed by the boundary values. 
This motivates to consider the Dirichlet energy 
$$ F_D(\nabla \f d ) = \frac{K}{2}| \nabla \f d|^2 \,,$$ 
which  is also called one-constant approximation. 
Most of the previous work concerning global solution concepts to the Ericksen--Leslie model consider this one constant approximation.

We introduce short notations for the derivatives of the free energy~\eqref{frei} with respect to $\nabla \f d$ and $ \f d$. The free energy~\eqref{frei} can be seen as a function $F: \R^d \times \R^{d\times d}$ where we replace $\f d$ in definition~\eqref{frei} by $\f h\in \R^d$ and $\nabla \f d $ by $ \f S\in \R^{d\times d }$.  Some vector calculus gives
\begin{align*}
2 F( \f h, \f S)  &=  k_1 \tr (\f S) ^2 + k_2 | ( \f S)_{\skw}|^2  + k_3 | \f h |^2 \tr ( \f S)^2  + k_4 ( \rot{ \f h } : ( \f S) _{\skw})^2  \\&\quad +4k_5| (\f S)_{\skw} \f  h|^2 \, 
\end{align*}
(see Section~\ref{sec:not} for the definition of the matrix $\rot{\cdot}$).

We abbreviate the derivative of $F$ with respect to $\f h$ by $F_{\f h}$ and the derivative with respect to $\f S$ by $F_{\f S}$ where
\begin{align*}
F_{\f S} : \R^d \times \R^{d\times d } \ra \R^{d\times d }  \quad \text{and } \quad F_{\f h} : \R^d \times \R^{d\times d } \ra \R^d \, ,
\end{align*}
these derivatives are given by
\begin{align}
\begin{split}
F_{\f S}(\f h ,\f S) & =   k _1 \tr (\f S) I + k_2 (\f S)_{\skw}  + k_3 \tr( \f S) | \f h|^2 I  +  k_4  \rot{ \f h}( \rot{\f h} :(\f S)_{\skw})\\& \quad  + 4 k_5 ((\f S)_{\skw}\f h \otimes \f h ) _{\skw} \\
F_{\f h} ( \f h , \f S) &= k_3 \tr(\f S)^2 \f h + 2 k_4 ( \rot{\f h} :(\f S)_{\skw}) \rott{( \f S)_{\skw}} + k_5 ( \f S)^T_{\skw}(\f S) _{\skw} \f h 
\, ,
\end{split}\label{FSFh}
\end{align}
(see Section~\ref{sec:not} for the definition of $\rott{\cdot}$).

To abbreviate, we define the tensor of order four $\f \Lambda \in \R^{d^4}$, and the tensor of order six $ \f \Theta \in \R^{d^6}$ via
\begin{align}
\f \Lambda_{ijkl} &: ={} k_1 \f \delta_{ij} \f \delta_{kl} + k_2 ( \f \delta_{ik}\f \delta_{jl}-\f\delta_{il}\f\delta_{jk})\,,\label{Lambda}
\intertext{and}
\f \Theta_{ijklmn} &:={}k_3 \f \delta_{ij}\f \delta_{lm}\f \delta_{kn} 
%
%
+ k_5  \left (  \f \delta_{il}\f \delta_{mn}\f \delta_{jk} - \f \delta_{mi}\f \delta_{ln}\f \delta_{jk} - \f \delta_{lj}\f \delta_{mn}\f \delta_{ik} + \f \delta_{jm}\f \delta_{ln}\f \delta_{ik}  \right )\notag
\\
& 
+k_4 \left (  \f \delta_{kn}\f \delta_{jm}\f \delta_{il} + \f \delta_{km}\f \delta_{jl}\f \delta_{in} + \f \delta_{kl}\f \delta_{jn}\f \delta_{im} - \f \delta_{kn}\f \delta_{jl}\f \delta_{im}- \f \delta_{km}\f \delta_{jn}\f \delta_{il} - \f \delta_{kl}\f \delta_{jm}\f \delta_{in}  \right )
\,, \notag
\end{align}
respectively.
Therewith, the free energy can be written as 
\begin{align}
2 F(\f d, \nabla \f d ) = \nabla \f d : \f \Lambda : \nabla \f d +  \nabla \f d \otimes  \f d  \dreidots \f \Theta \dreidots  \nabla \f d \otimes \f d \,. \label{tensoren}
\end{align}
The tensor $\f \Lambda$ is strongly elliptic, i.e.~there is an $\eta>0$ such that $ \f a \otimes  \f b : \f \Lambda : \f a \otimes \f b   \geq \eta | \f a|^2 | \f b|^2 $ for all $\f a, \f b \in \R^3$. Indeed, it holds 
\begin{align}\label{ellip}
 \f a \otimes  \f b : \f \Lambda : \f a \otimes \f b  = k_1 (\f a \cdot \f b)^2 + k_2 ( | \f a |^2 | \f b|^2-( \f a \cdot \f b )^2 ) \geq \min\{k_1,k_2\} | \f a |^2 | \f b|^2\,.
\end{align}

\subsection{Regularized system}
Before, we show the existence of measure-valued solutions, we consider a regularized system and show the existence of weak solutions to this system.
A regularizing and a penalizing term  are added to the free energy potential and the system is adapted accordingly.
The regularized free energy potential is given by
\begin{align}
{F}_\delta(\f d , \nabla \f d , \nabla^2 \f d) : = \frac{\delta}2 | \Lap \f d |^2 + F(\f d , \nabla \f d )+ \frac{ 1}{4 \varepsilon(\delta) }( | \f d|^2 -1 )^2 \, ,\label{regfrei}
\end{align} 
where $\delta>0$ and $F$ is given by~\eqref{frei}. We define $\varepsilon(\delta)=\delta$. This is just a linear connection of the regularization parameter $\delta$ and the penalization parameter $\varepsilon$. Later on, we are going to choose another connection to be able to prove better estimates (see Section~\ref{sec:add}).
Like beforehand, if $\f d$, $\nabla \f d $, and $\nabla^2 \f d $ are replaced by $\f h$, $\f S$, and $\f \Gamma$, respectively, the regularized free energy potential can be written as
\begin{align*}
{F}_{\delta}(\f h ,  \f S , \f \Gamma ) = \frac{\delta}{2}  \left |\f   \Gamma :I \right |^2 + F( \f h, \f S) + \frac{ 1}{4 \varepsilon }( | \f h|^2 -1 )^2  \,.
\end{align*}
Thus, the free energy is given by $\F_{\delta}(\f d):=\int_{\Omega} F_{\delta}(\f d , \nabla \f d , \nabla ^2 \f d )\de \f x$ and the variational derivative of this free energy by
\begin{align}
{\f q}_{\delta} = \frac{\delta {\F_{\delta}}}{\delta \f d} = \pat{F_{\delta}}{\f h} - \di \pat{{F}_{\delta}}{ \f S } + \nabla ^2 : \pat{ {F}_{\delta}}{\f \Gamma }  =  \f q + \delta \Lapp \f d + \frac{1}{\varepsilon}( | \f d|^2-1)\f d  \, .\label{qtilde}
\end{align}

Additionally, we have to adapt the  Ericksen stress $\f T^E$  for the regularized system,
\begin{align}
\f T^E_{\delta} : = \f T^E + \delta  \Lap \f d \cdot \nabla^2 \f d  - \delta \nabla \f d ^T  \nabla \Lap \f d \, .\label{Erikreg}
\end{align}
\begin{rem}
This adaptation is necessary in order to show the energy equality~\eqref{entro1} for the discretized system, which is essential for all a priori estimates.
\end{rem}
First, we recall the important relation between the Ericksen stress and the gradient of the director multiplied with the variational derivative (see~\cite{unsere})
\begin{align*}
( \f T^E ; \nabla \f w ) - ( \nabla \f d^T {\f q}, \f w) = 0\quad \text{ for all } \f w \in \V \, .
\end{align*} 
A similar identity  holds for the regularized system. Let again be $\f w \in \V$, then we have
\begin{align}
\begin{split}
({\f T}_{\delta}^E ; \nabla \f w ) - ( \nabla \f d^T {\f q_{\delta}} , \f w )&=( {\f T}^E ; \nabla \f w ) - ( \nabla \f d^T \f q , \f w ) \\
&\quad  + \delta  (  \Lap \f d \cdot\nabla^2 \f d ; \nabla \f w) - \delta (\nabla \f d ^T  \nabla \Lap \f d  ; \nabla \f w ) 
\\
&\quad 
 - \delta ( \nabla \f d ^T\Lapp \f d , \f w )- \frac{1}{\varepsilon }\left ( \nabla \f d ^T \f d ( | \f d|^2-1) , \f w  \right )\\
&=   \delta  (  \Lap \f d \cdot\nabla^2 \f d ; \nabla \f w) - \delta (\nabla \f d ^T  \nabla \Lap \f d  ; \nabla \f w ) \\
& \quad 
+ \delta ( \nabla \f d^T  \nabla \Lap \f d ; \nabla \f w ) + \delta ( \nabla (\nabla \f d)^T :  \nabla \Lap \f d , \f w) 
\\
&\quad - \frac{1}{2\varepsilon } ( \nabla |\f d |^2 ( | \f d|^2-1) , \f w )\\
&= \delta  (  \Lap \f d  \nabla^2 \f d ; \nabla \f w)  - \delta  (  \Lap \f d \cdot\nabla^2 \f d ; \nabla \f w) 
\\
&\quad - \delta ( \nabla \Lap \f d  \cdot \Lap \f d , \f w) - \frac{1}{4\varepsilon } (  \nabla ( | \f d|^2-1)^2 , \f w )
 \\
& =- \intet{( \f w \cdot \nabla ) \left (\frac{\delta}{2}| \Lap \f d|^2+\frac{1}{4\varepsilon }   ( | \f d|^2-1)^2 \right ) } =0 \, .
\end{split}\label{Erikseniden}
\end{align}
We remark, that we have to equip the regularized system with another boundary condition, since the regularizing term is of higher order.
We regularize with the square of the operator $\Lap$ and thus get the additional boundary condition $\Lap \f d = 0 $ on $\partial \Omega$.


\begin{definition}[Weak solution to the regularized system]\label{defi:weak}
A pair $(\fd v , \fd d )$ is said to be a solution to the  regularized Ericksen--Leslie system if
\begin{align}
\begin{split}
\fd v &\in L^\infty(0,T;\Ha)\cap  L^2(0,T;\V)
\cap W^{1,2}(0,T; (\f H^2 \cap \V)^*),
\\ \fd d& \in L^\infty(0,T;\Hc)\cap  L^2(0,T;\Hg) \cap W^{1,2}  (0,T;   \f L^{\nicefrac{3}{2}} ),
\end{split}\label{weakreg}
\end{align}
and if
\begin{subequations}\label{weak}
\begin{align}
\begin{split}
-\int_0^T (\fd v(t), \f \varphi'(t)) \de t &+ \int_0^T ((\fd v(t)\cdot \nabla) \fd v(t), \f \varphi(t)) \de t  - \intte{\left (\nabla
\fd d(t)^T  {\f T}^E_{\delta}(t)  ;\nabla   \f \varphi(t)
\right )}\\&+ \intte{(\f T^L_{\delta}(t): \nabla \f \varphi(t) ) } =\intte{ \left \langle \f g (t),\f \varphi(t)\right \rangle } ,\quad
\end{split}\label{eq:velo}\\\begin{split}
-\intte{( \fd d(t), \f \psi'(t) ) } &+ \intte{((\fd v(t)\cdot \nabla ) \fd d(t), \f \psi(t))} - \intte{\left( (\nabla \fd v(t))_{\skw} \fd d(t) , \f \psi(t)\right )}\\&
+\lambda\intte{\left( (\nabla \fd v(t))_{\sym} \fd d(t) , \f \psi(t)\right)}
+ \intte{\left(\f {q}_{\delta}(t) , \f \psi(t)\right)}=0 \quad
\end{split}
\label{eq:dir}
\end{align}%
\end{subequations}
for all solenoidal  $ \f \varphi \in \mathcal{C}_c^\infty( \Omega \times ( 0,T);\R^3))$ and $ \f \psi \in\mathcal{C}_c^\infty( \Omega \times ( 0,T);\R^3))$.
Additionally, the 
initial conditions  shall be fulfilled, i.e.~$(\fd v (0) , \fd d(0)) \rightharpoonup ( \f v_0, \f d_{0}) $ in $  \Ha \times \f H^{2}$  and the boundary values shall be fulfilled in the sense of the trace operator.

\end{definition}

\begin{theorem}[Existence of solutions to the regularized system]\label{thm:weak}
Let $\Omega$ be a bounded domain of class $\C^{3,1}$.
For given initial data $ \f v_0 \in \Ha $ and $ \f d_{0}\in  \Hc $ with $ | \f d_0|=1$ for a.e.~$\f x \in \Omega$,  boundary data $\f d_1\in \f H^{\nicefrac{7}{2}}( \partial \Omega)$ fulfilling the compatibility condition $ \f \gamma_0 (\f d_0) = \f d_1$, and right-hand side $ \f g\in L^2 ( 0,T; (\V)^*)$, there exists a global-in-time  solution to the Ericksen--Leslie system~\eqref{eq:strong} equipped with the regularized free energy~\eqref{regfrei}
in the sense of Definition~\ref{defi:weak}.
The solution additionally fulfills the intrinsic boundary condition $\f \gamma_0(\Lap \fd d ) =0$. 
\end{theorem}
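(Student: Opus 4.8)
## Proof proposal for Theorem~\ref{thm:weak}

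\textbf{Galerkin approximation.} The plan is to construct solutions via a Faedo--Galerkin scheme. First I would fix suitable bases: for the velocity, the eigenfunctions $\{\f w_i\}$ of the Stokes operator in $\V$, which are also orthogonal in $\Ha$ and (by the $\C^{3,1}$ regularity of $\Omega$) lie in $\f H^2\cap\V$; for the director, the eigenfunctions $\{\f z_i\}$ of the bi-Laplacian $\Lapp$ with the two boundary conditions $\f z_i = \f d_1$-type data and $\Lap\f z_i = 0$ on $\partial\Omega$ (after subtracting a fixed harmonic extension $\f d_1^*$ of the boundary datum $\f d_1\in\f H^{7/2}(\partial\Omega)$, so that the unknown has homogeneous data and the compatibility $\f\gamma_0(\f d_0)=\f d_1$ is respected). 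One then seeks $\fn v(t)=\sum_{i\le n}a_i(t)\f w_i$ and $\fn d(t)=\f d_1^* + \sum_{i\le n}b_i(t)\f z_i$ solving the projected system~\eqref{weak}; local-in-time existence of the ODE system for $(a,b)$ follows from the Carathéodory/Picard--Lindelöf theorem once one checks that all nonlinearities ($\f T^E_\delta$, $\f T^L_\delta$, $\f q_\delta$, the convective terms, the penalization $\varepsilon^{-1}(|\f d|^2-1)\f d$) are locally Lipschitz in the coefficients — this uses that $\fn d$ sits in a finite-dimensional space contained in $\Hg$, so all derivatives up to fourth order and all polynomial nonlinearities are controlled.

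\textbf{A priori estimates.} The heart of the argument is the energy estimate. Testing the discretized~\eqref{eq:velo} with $\fn v$ and the discretized~\eqref{eq:dir} with $\f q_\delta(\fn d)$ (the latter is admissible in the Galerkin scheme precisely because of the choice of basis $\{\f z_i\}$), and exploiting the Ericksen-stress identity~\eqref{Erikseniden} to cancel the coupling terms $(\nabla\fn d^T\f T^E_\delta;\nabla\fn v)$ against $(\nabla\fn d^T\f q_\delta,\fn v)$, yields the energy (in)equality
\begin{align*}
\frac{1}{2}\|\fn v(t)\|_{\Le}^2 + \F_\delta(\fn d(t)) + \int_0^t\!\!\Big(\text{dissipation from }\f T^L_\delta\text{ and }|\f q_\delta|^2\Big)\,\de s \le \frac{1}{2}\|\f v_0\|_{\Le}^2 + \F_\delta(\f d_0) + \int_0^t\langle\f g,\fn v\rangle\,\de s
\end{align*}
up to boundary contributions from $\f d_1^*$ that are controlled by the data. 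The positivity assumptions~\eqref{con} guarantee the Leslie-stress dissipation is coercive in $\sy v$, hence (with Korn) in $\|\fn v\|_{\V}$; the term $\delta\|\Lapp\fn d\|$-type contribution inside $\|\f q_\delta\|^2$ together with the regularized energy $\F_\delta$ controlling $\frac{\delta}{2}\|\Lap\fn d\|_{\Le}^2$ gives, via elliptic regularity for $\Lapp$ with the prescribed boundary conditions, the bound $\fn d\in L^\infty(0,T;\Hc)\cap L^2(0,T;\Hg)$. Gronwall then makes all this global in $t$. Separately, one reads off $\t\fn v\in L^2((\f H^2\cap\V)^*)$ by testing~\eqref{eq:velo} with $\f H^2\cap\V$ functions and estimating each nonlinear term using the just-obtained bounds (the worst term, $(\fn v\cdot\nabla)\fn v$, lands in $L^2(\f L^{3/2})\hookrightarrow L^2((\f H^2)^*)$ after Hölder and Sobolev), and $\t\fn d\in L^2(\f L^{3/2})$ similarly from~\eqref{eq:dir}, the critical term being $\f q_\delta$ which is in $L^2(\Le)$.

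\textbf{Passage to the limit.} With these $n$-uniform bounds, standard compactness — Banach--Alaoglu for the weak-$*$ limits, and Aubin--Lions for strong convergence $\fn v\to\fd v$ in $L^2(\Ha)$ and $\fn d\to\fd d$ in $L^2(\Hc)$ (interpolating between $\Hg$ and $\f L^{3/2}$, or using $W^{1,2}$-in-time) together with $\nabla\fn d\to\nabla\fd d$ strongly in $L^2(\He)$-ish norms — lets one pass to the limit in every term of~\eqref{weak}. The polynomial nonlinearities in $\f q_\delta$ and $\f T^E_\delta,\f T^L_\delta$ converge because of the strong convergence of $\fn d$ and $\nabla\fn d$ (for the highest-order pieces $\delta\Lapp\fn d$, $\delta\Lap\fn d\cdot\nabla^2\fn d$ etc., weak convergence in $L^2$ paired against the strong convergence of the other factor suffices); the convective and rotational terms in~\eqref{eq:velo}, \eqref{eq:dir} converge by the strong $L^2$-convergence of $\fn v$ and $\fn d$. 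Finally, initial conditions are recovered from the $W^{1,2}$-in-time bounds (giving $\fn v, \fn d$ bounded in $\C_w$) and the compatibility plus trace theory delivers the boundary conditions, including the intrinsic one $\f\gamma_0(\Lap\fd d)=0$ inherited from the basis $\{\f z_i\}$.

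\textbf{Main obstacle.} I expect the delicate point to be the a priori estimate for $\fn d$: justifying that testing~\eqref{eq:dir} with $\f q_\delta(\fn d)$ is legitimate in the Galerkin scheme (this is exactly why the director basis must diagonalize $\Lapp$ with the two boundary conditions, so that $\f q_\delta(\fn d)$ — or at least its leading part $\delta\Lapp\fn d$ — stays in the span of $\{\f z_i\}$), and then converting the resulting $\|\f q_\delta\|_{\Le}^2$-control plus the energy bound into genuine $L^2(0,T;\Hg)$-regularity via elliptic estimates, while carefully tracking the boundary terms generated by the inhomogeneous datum $\f d_1^*$ and by the Ericksen-stress manipulation~\eqref{Erikseniden} — these must all be absorbed or estimated by the data norms $\|\f d_1\|_{\f H^{7/2}(\partial\Omega)}$ and $\|\f d_0\|_{\Hc}$. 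Keeping $\delta>0$ fixed throughout makes every estimate finite; the genuinely hard $\delta\to 0$ analysis is deferred to later sections.
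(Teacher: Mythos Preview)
Your overall strategy matches the paper's: Galerkin approximation with Stokes eigenfunctions for $\f v$ and (what amount to) Dirichlet--Laplacian eigenfunctions for $\f d$ after subtracting a harmonic extension of $\f d_1$, the energy identity obtained by testing the director equation with the variational derivative, $n$-uniform a priori bounds, Aubin--Lions compactness, and passage to the limit. Your ``main obstacle'' is also correctly identified.

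There is, however, one concrete formulation point you gloss over that the paper resolves explicitly and that would otherwise break the argument. You propose to project system~\eqref{weak} (which carries $\f T^E_\delta$ in the velocity equation) and then test the director equation with $\f q_\delta(\fn d)$; but as you note yourself, only the leading piece $\delta\Lapp\fn d$ lies in $Z_n$, not the nonlinear parts. If you test instead with $R_n\f q_\delta\in Z_n$, the convective term produces $((\fn v\cdot\nabla)\fn d, R_n\f q_\delta)$, while the velocity equation tested with $\fn v$ gives, via~\eqref{Erikseniden}, $(\f T^E_\delta;\nabla\fn v)=((\fn v\cdot\nabla)\fn d,\f q_\delta)$ --- and these do \emph{not} cancel since $(\fn v\cdot\nabla)\fn d\notin Z_n$. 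The paper's fix is to formulate the \emph{discrete} velocity equation~\eqref{vdis} with the term $-(\nabla\fn d^T\fn q,\f w)$, where $\fn q:=R_n(F_{\f h}-\di F_{\f S}+\varepsilon^{-1}(|\fn d|^2-1)\fn d)+\delta\Lapp\fn d$ already carries the projection; then the cancellation against $((\fn v\cdot\nabla)\fn d,\fn q)$ from~\eqref{ddis} is exact, and the Ericksen identity~\eqref{Erikseniden} is invoked only \emph{after} $n\to\infty$ to recover the weak form~\eqref{eq:velo} (the paper remarks that~\eqref{Erikseniden} ``does not hold for the approximate analogues''). Two smaller corrections: at the Galerkin level the bound on $\t\fn d$ is in $L^2(0,T;\Hd)$, not $L^2(0,T;\f L^{3/2})$, because $R_n$ is only known to be stable on $\Hb$ --- the $\f L^{3/2}$ regularity is obtained only for the limit $\fd d$; and the energy equality~\eqref{entro1} carries the cross term $((\mu_2+\mu_3)-\lambda)(\fn q,(\nabla\fn v)_{\sym}\fn d)$ on the right, which is absorbed using the second line of~\eqref{con} rather than dropped.
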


\subsection{Measure-valued solutions}
\begin{defi}[Measure-valued solutions]\label{def:meas}
The tupel $( ( \f v ,\f d ) , ( \nu^o,m , \nu^\infty) , ( \mu , \nu^\mu))$ consisting of the pair $(\f v , \f d)$ of velocity field $\f v$ and director field  $\f d$, the generalized gradient Young measure $(\mu, \nu^\mu)$, and the defect measure $(\mu , \nu^\mu)$ (see Section~\ref{sec:young})  is  said to be a measure-valued solution to~\eqref{eq:strong} if
\begin{align}
\begin{split}
\f v &\in L^\infty(0,T;\Ha)\cap  L^2(0,T;\V)
\cap W^{1,2}(0,T; ( \f W^{1,3}_{0,\sigma}(\Omega))^*),
\\ \f d& \in L^\infty(0,T;\He)\cap   W^{1,2}  (0,T;   \f  L^{\nicefrac{3}{2}} ),\\
\{\nu^o _{(\f x,t)}\}&  \subset \mathcal{P} ( \R^{d\times d})\, \text{ a.e.~in $\Omega\times [0,T]$} \, ,\\
 \{m_t\} &\subset \mathcal{M}^+(\ov\Omega)\,\text{ a.e.~in $ [0,T]$} \, ,  \\
 \{\nu^\infty _{(\f x,t)}\} &\subset \mathcal{P}(\ov B_d\times \Se^{d^2-1})\,\text{ $m_t $-a.e.~in }\Omega \text{ and a.e.~in }   [0,T]\, ,\\
 \{\mu_t\} &\subset \mathcal{M}^+(\ov\Omega)\,\text{ a.e.~in $ [0,T]$} \, ,  \\
\{ \nu^{\mu}_{(\f x ,t)}\}&\subset \mathcal{P}(\Se^{d^3-1})\, \text{ $\mu_t$-a.e.~in }\Omega \text{ and a.e.~in }   [0,T]\, 
\end{split}\label{measreg}
\end{align}
and if
\begin{subequations}\label{meas}
\begin{align}
\begin{split}
&\int_0^T (\t \f v(t), \f \varphi(t)) \de t + \int_0^T ((\f v(t)\cdot \nabla) \f v(t), \f \varphi(t)) \de t  - \intte{ \ll{\nu_t,\f S^T F_{\f S}(\f h, \f S):\nabla \f \varphi(t)   }  }
\\&-2 \int_0^T \ll{\mu_t, \f \Gamma \dreidots ( \f \Gamma\cdot \nabla\f \varphi(t))}+ \intte{(\f T^L(t): \nabla \f \varphi(t) ) } =\intte{ \left \langle \f g (t),\f \varphi(t)\right \rangle }\, \quad
\end{split}\notag 
\intertext{as well as}
\begin{split}
&\intte{\left ( \f d (t) \times\left ( \t \f d(t)+( \f v(t)\cdot \nabla ) \f d(t) -  \sk{v(t)}\f d(t)\right ) , \f \psi(t)\right )}\\&
+\lambda\intte{\left(\f d (t) \times  \sy{v(t)} \f d(t) , \f \psi(t)\right)}
+ \intte{(\rot{\f d(t)}  F_{ \f S} ( \f d(t) , \nabla \f d (t)) ; \nabla \f \psi(t) )} \\ &  +\intte{ \ll{\nu_t, \left (\f \Upsilon :\left (\f  S  (F_{\f S}(\f h, \f S))^T\right )\right ) \cdot\f \psi(t)   }}+\intte{ \ll{\nu_t, \left (\f h \times F_{\f h}(\f h, \f S)\right ) \cdot\f \psi(t)   }}=0\, \quad
\end{split}
\label{eq:mdir}
\end{align}%
\end{subequations}
hold for all $ \f \varphi \in \mathcal{C}_c^\infty(\Omega \times ( 0,T);\R^3))$ with $ \di \f \varphi =0$ and $ \f \psi \in  \mathcal{C}_c^\infty(\Omega \times ( 0,T);\R^3))$, respectively.
Additionally, the norm restriction of the director holds, i.\,e. $|\f d (\f x,t)|=1$ for a.e.~$(\f x, t)\in \Omega\times (0,T)$, the oscillation measure of the identity is the gradient of the director
\begin{align*}
\int_{\R^{3\times 3}} \f S  \nu_{(\f x,t)}^o(\de \f S)   = \nabla \f d(\f x,t)\, ,
\end{align*}
for a.\,e. $( \f x,t) \in \Omega\times (0,T)$ 
and the
initial conditions $( \f v_0, \f d_0)\in \Ha \times \Hc$ with $ \f d_0 \in \Hrand{7}$ shall be fulfilled in the weak sense.
The dual pairings are defined as
\begin{align*}
 \ll{\mu_t,f} :={}& \int_{\ov \Omega} \int_{\Se ^{d^3-1}}  \sum_{i,j=1}^3f(\f \Gamma) \nu^\mu_{(\f x ,t)} ( \de \f \Gamma) \mu_t(\de \f x )\,\intertext{ for $f \in \C(\Se^{3^3-1};\R) $  and}
\ll{\nu_t, f } :={}& 
 \int_{\Omega} \int_{\R^{d\times d} } f(\f x, \f d(\f x, t), \f S)  \nu^o_{(\f x, t)} ( \de \f S)\de \f x \\&+ \int_{\ov\Omega}\int_{\Se^{d^2-1} \times \ov B_d} \tilde{f} (\f x,  \tilde{\f h} , \tilde{\f S}) \nu_{(\f x, t)}^\infty ( \de \tilde{\f S}, \de \tilde{\f h}) m_t (\de \f x)
\end{align*}
for $f\in \mathcal{R}$ (see~\eqref{R} below).
\end{defi}
We refer to the section~\ref{sec:not} for the definition of the tensor $\f \Upsilon$ and to \eqref{transi} for the definition of the transformed function $\tilde{f}$.
\begin{rem}
We often abuse  the notation by writing $ \ll{\nu_t , f(\f h , \f S)}$. Thereby, we mean the generalized Young measure applied to the continuous function $(\f h, \f S)\mapsto f (\f h ,\f S)$. 
\end{rem}

\begin{theorem}[Existence of measure-valued solutions]\label{thm:meas}
Let $\Omega$ be a bounded domain of class $\C^{3,1}$.
For given initial data $ \f v_0 \in \Ha $ and $ \f d_0\in  \Hc $
with $ | \f d_0|=1$ for a.e.~$\f x \in \Omega$, boundary data $\f d_1\in \f H^{\nicefrac{7}{2}}( \partial \Omega)$ fulfilling the compatibility condition $ \f \gamma_0 (\f d_0) = \f d_1$, and right-hand side $ \f g\in L^2 ( 0,T; (\V)^*)$, there exists a measure-valued  solution to the Ericksen--Leslie system~\eqref{eq:strong}  with
the Oseen--Frank free energy~\eqref{frei}
in the sense of Definition~\ref{def:meas}.
\end{theorem}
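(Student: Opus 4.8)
\textbf{Proof strategy for Theorem~\ref{thm:meas}.}
The plan is to obtain the measure-valued solution as the limit of the sequence of weak solutions $(\fd v, \fd d)$ to the regularized system from Theorem~\ref{thm:weak} as $\delta\to 0$. First I would collect the $\delta$-uniform a priori estimates. The energy equality~\eqref{entro1} for the regularized system (valid because of the adapted Ericksen stress~\eqref{Erikreg} and the identity~\eqref{Erikseniden}) together with the dissipativity assumptions~\eqref{con} gives bounds for $\fd v$ in $L^\infty(\Ha)\cap L^2(\V)$, for $\fd d$ in $L^\infty(\He)$, for $\delta^{1/2}\Lap\fd d$ in $L^\infty(\Le)$, and for $\varepsilon(\delta)^{-1/2}(|\fd d|^2-1)$ in $L^\infty(L^2)$; testing the director equation and using these bounds yields a bound for $\t\fd d$ in $L^2(\f L^{3/2})$ and for $\t\fd v$ in $L^2((\f W^{1,3}_{0,\sigma})^*)$. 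The penalization bound forces $|\f d|=1$ a.e.\ in the limit, and the $L^2(\f L^{3/2})$-bound on $\t\fd d$ together with the $\He$-bound and Aubin--Lions gives strong convergence $\fd d\to\f d$ in, say, $L^2(\f L^2)$ (in fact in $C_w(\He)$ and $L^2(\f H^{1})$ up to subsequence, but strong $L^2$ convergence of $\fd d$ itself and a.e.\ convergence is what I will lean on), which also passes the initial and boundary conditions of $\f d$ to the limit; similarly $\fd v\rightharpoonup\f v$ weakly-$*$, strongly in $L^2(\Ha)$ by Aubin--Lions with the velocity equation.

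Next I would apply the generalized gradient Young measure machinery of Section~\ref{sec:young} to the sequence $\nabla\fd d$: since $\nabla\fd d$ is bounded in $L^\infty(\Le)$ but the nonlinear terms in the free energy (the quartic terms carrying $F_{\f S}$, $F_{\f h}$) are only bounded in $L^1$, its weak limit does not commute with these nonlinearities and oscillation/concentration defects appear. The main theorem of Section~\ref{sec:young} produces the triple $(\nu^o, m, \nu^\infty)$ with the stated measurability and the constraint $\int_{\R^{3\times 3}}\f S\,\nu^o_{(\f x,t)}(\de\f S)=\nabla\f d(\f x,t)$, i.e.\ the barycentre of the oscillation measure is the weak limit $\nabla\f d$. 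The quartic terms in~\eqref{frei} have the structure "quadratic in $\nabla\fd d$ times quadratic in $\fd d$", and since $\fd d\to\f d$ strongly and $\f d$ is bounded, I can freeze the $\f d$-slot and view everything as $\ll{\nu_t,f(\f h,\f S)}$ with $f\in\Rc$; this is precisely why the dual pairing $\ll{\nu_t,\cdot}$ in Definition~\ref{def:meas} is built on the class $\Rc$ of functions with the prescribed growth/recession behaviour. For the $\delta$-regularizing terms $\delta\Lapp\fd d$, $\delta\Lap\fd d\cdot\nabla^2\fd d$, $\delta\nabla\fd d^T\nabla\Lap\fd d$ in the momentum equation: these are quadratic in the quantity $\sqrt\delta\,\nabla^2\fd d$ (or products of $\sqrt\delta\Lap\fd d$ with $\sqrt\delta\nabla^2\fd d$) which is only bounded in $L^2$ after multiplication by $\sqrt\delta$; their weak-$*$ limits are captured by a second defect measure $(\mu,\nu^\mu)$ concentrated on $\Se^{d^3-1}$, giving the term $-2\int_0^T\ll{\mu_t,\f\Gamma\dreidots(\f\Gamma\cdot\nabla\f\varphi)}$ in~\eqref{eq:mdir}. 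I would check that, after multiplying by $\sqrt\delta$, these quadratic expressions have a limit of the right form by the fundamental theorem on (classical) Young measures applied to $\sqrt\delta\,\nabla^2\fd d/\|\sqrt\delta\,\nabla^2\fd d\|$ together with the weight $\mu_t=\lim\delta|\nabla^2\fd d|^2\,\de\f x$ weakly-$*$ in measures.

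Then I would pass to the limit term by term in~\eqref{weak}. The linear and lower-order quadratic terms (convective terms, $\sk{\cdot}$, $\sy{\cdot}$ contributions, the Leslie stress $\f T^L_\delta\to\f T^L$) pass by the strong $L^2$ convergences of $\fd v$ and $\fd d$ combined with weak $L^2$ convergence of $\nabla\fd v$; note $\f e_\delta\to\f e$ requires care but follows since $\t\fd d$ converges weakly in $L^2(\f L^{3/2})$ and the remaining pieces are products of strongly and weakly convergent factors. The Ericksen-stress term and the $\f q$-terms in the director equation: here I use the relation~\eqref{Erikseniden}-type structure, the strong convergence of $\fd d$ to freeze the non-gradient slots, and the generalized Young measure to represent $\nabla\fd d^T F_{\f S}(\fd d,\nabla\fd d)$ and $\rot{\fd d}F_{\f S}(\fd d,\nabla\fd d)$ and $\fd d\times F_{\f h}(\fd d,\nabla\fd d)$; the term $\rot{\f d}F_{\f S}(\f d,\nabla\f d)$ appearing explicitly in~\eqref{eq:mdir} is the "regular part" evaluated at the limit, the $\ll{\nu_t,\cdot}$-terms absorb oscillation and concentration. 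The penalization term $\varepsilon^{-1}(|\fd d|^2-1)\fd d$ in $\f q_\delta$: in the director equation this term is in the direction $\f d$, so after taking the cross product with $\f d$ it drops — this is the reason the director equation in~\eqref{eq:mdir} is written in cross-product form and the penalization never appears there; in the momentum equation the corresponding term $\nabla\fd d^T\fd d(|\fd d|^2-1)/\varepsilon$ is a gradient (it equals $\tfrac14\nabla((|\fd d|^2-1)^2)/\varepsilon$), hence drops against a solenoidal test function. The $\delta\Lapp\fd d$ contribution to the director equation: after the cross product this is $\fd d\times\delta\Lapp\fd d$, which I would rewrite (as in~\eqref{Erikseniden}) in divergence form plus terms of order $\delta\nabla\fd d\cdot\nabla^2\fd d$, all carrying a factor $\delta$ against bounds that blow up at most like $\delta^{-1}$, so a borderline estimate; the honest bound is $\|\delta\Lapp\fd d\|$ is not small, but $\fd d\times\delta\Lapp\fd d=\di(\fd d\times\delta\nabla\Lap\fd d)-\nabla\fd d\times\delta\nabla\Lap\fd d$ and $\delta\nabla\Lap\fd d$... this is exactly the kind of term that must be shown to vanish or be captured by $\mu$, and it is where I expect the real work to be.

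\textbf{Main obstacle.} The delicate point is the treatment of the $\delta$-regularizing terms: showing that $\delta$ times the biharmonic/second-gradient expressions either vanish in the limit or are faithfully represented by the defect measure $(\mu,\nu^\mu)$, and in particular that no uncontrolled contribution survives in the director equation. This requires the sharp scaling $\sqrt\delta\,\nabla^2\fd d$ bounded in $L^2$ (from the energy estimate) and nothing better, so only the quadratic combinations have limits, matching exactly the single term $-2\int_0^T\ll{\mu_t,\f\Gamma\dreidots(\f\Gamma\cdot\nabla\f\varphi)}$ in the momentum balance and forcing the cross-product form of the director equation. A secondary subtlety is verifying the barycentre identity $\int\f S\,\nu^o=\nabla\f d$ and the compatibility of the two measures on their respective supports $\ov B_d\times\Se^{d^2-1}$ and $\Se^{d^3-1}$, which is handled by the abstract representation theorem proved in Section~\ref{sec:young} applied to the pair $(\fd d,\nabla\fd d)$ and to $\sqrt\delta\,\nabla^2\fd d$ respectively; everything else is a by-now-standard compactness-and-limit-passage in the spirit of Lin--Liu~\cite{linliu1} and the DiPerna--Majda framework~\cite{DiPernaMajda,alibert}.
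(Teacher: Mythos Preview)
Your overall strategy matches the paper's exactly: take the weak solutions $(\fd v,\fd d)$ of the regularized system, collect $\delta$-uniform energy estimates, extract strong convergence of $\fd v$ and $\fd d$ by Aubin--Lions, represent the nonlinear gradient terms via the generalized gradient Young measure $(\nu^o,m,\nu^\infty)$ of Section~\ref{sec:young}, represent the quadratic second-gradient terms in the Ericksen stress via the defect measure $(\mu,\nu^\mu)$, and pass to the limit term by term. Your handling of the penalization (cross product kills it in the director equation, gradient structure kills it against solenoidal test functions in the momentum equation) and of the Ericksen stress (after two integrations by parts, $\delta\nabla^2\fd d\dreidots\nabla^2\fd d\cdot\nabla\f\varphi$ is exactly the defect-measure term) is right.

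The one point you flag as unresolved --- the fate of $\delta\,\fd d\times\Lapp\fd d$ in the director equation --- is handled in the paper by a cleaner device than your single integration by parts. Integrate by parts \emph{twice}, moving the outer Laplacian off $\Lapp\fd d=\Lap(\Lap\fd d)$ onto the factor $\rot{\fd d}^T\f\psi$ (boundary terms vanish since $\f\gamma_0(\Lap\fd d)=0$). Expanding $\Lap(\rot{\fd d}^T\f\psi)$ by the product rule, the leading term is
\[
\delta\bigl(\Lap\fd d\times\Lap\fd d,\f\psi\bigr)=0
\]
by antisymmetry of the cross product. The remaining terms carry either $\delta\,\Lap\fd d$ against $\nabla\fd d\otimes\nabla\f\psi$ or $\delta\,\Lap\fd d$ against $\fd d\otimes\Lap\f\psi$, and both are bounded by $\sqrt\delta\cdot\bigl(\sqrt\delta\|\Lap\fd d\|_{\Le}\bigr)\cdot\|\fd d\|_{\He}\cdot\|\f\psi\|_{\f W^{2,\infty}}\to 0$. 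So this term \emph{vanishes} in the limit and contributes nothing to any measure; the defect measure $(\mu,\nu^\mu)$ appears only in the momentum equation. Your attempted decomposition $\fd d\times\delta\Lapp\fd d=\di(\cdots)-\nabla\fd d\times\delta\nabla\Lap\fd d$ stalls precisely because $\delta\nabla\Lap\fd d$ is not controlled; the second integration by parts is what avoids this.

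One small overclaim: you do not get strong convergence of $\fd d$ in $L^2(0,T;\He)$ --- if you did, no Young measure would be needed. The paper obtains strong convergence only in $L^q(0,T;\f L^r)$ for $r<12$ (via $\He\hookrightarrow\f L^r$ compactly for $r<6$ and interpolation with the $\f L^{12}$ bound coming from $\|\nabla\fd d\,|\fd d|\|_{L^\infty(\Le)}$). Since you correctly say you only lean on strong $L^2$ convergence of $\fd d$ itself and pointwise a.e.\ convergence, this does not affect your argument.
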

\begin{rem}
This is a global but very weak solution concept. Nonlinear occurring gradients of the director, i.\,e.~the Ericksen-stress and parts of the variational derivative, are represented by the associated generalized gradient Young measure. Additionally, a defect measure appears due to the regularization in the Ericksen-stress. In an upcoming article, we are going to show that this measure-valued solutions fulfill the weak-strong uniqueness property. As long as a local strong solution exists to this model, it coincides with the measure-valued solution. 
Local strong solutions are known to exist for similar models, see for instance~\cite{localin3d},~\cite{recent}, or~\cite{Pruess2}. 
\end{rem}

\begin{rem}
When we choose $\varepsilon = \delta^{\nicefrac{7}{3}}$, it can be shown that the support of the defect angle measure $\nu^\infty$ is $\Se^{d^2-1} \times \Se^{d-1}_{\nicefrac{1}{2}} $ instead of $\Se^{d^2-1} \times B_d$ (see Proposition~\ref{prop:apri2an}). 

\end{rem}
\begin{rem}
We postulate that the defect measure $\mu$ vanishes almost everywhere in $\Omega\times [0,T]$.
In the future, we additionally want to investigate
 whether the oscillation measure $\nu^o$ coincides with the point measure~$\delta_{\nabla \f d}$. However, such analysis relies on local energy methods (see for instance~\cite{Dim3}) which are very different to the global techniques used in this paper.

\end{rem}

\section{Generalized gradient Young measures\label{sec:young}}
This section introduces the concept of generalized gradient Young measures and the sense of convergence that is used to prove Theorem~\ref{thm:meas}.

\subsection{Definitions and main theorem for generalized gradient Young measures}
Consider a sequence of functions $\{ \f d_{\delta}\} \subset L^\infty(0,T;\Hc)$ with $\|\nabla \f d_\delta |\f d_\delta|\|_{L^\infty(0,T;\Le)}\leq c $.  We want to study the limit of sequences of the form 
\begin{equation}
f( \cdot , \f d_{\delta}( \cdot ) , \nabla \fd d(\cdot)): Q \ra \R \,\label{limiti}
\end{equation} 
for continuous functions $f$ with appropriate growth conditions. 

We abbreviate $Q : = \Omega \times (0,T)$ and for a given function $f \in \C ( \ov Q \times \R^d \times \R^{d\times d }  ) $, we define its transform $ \tilde{f}\in \C ( \ov Q \times {B}_d \times {B}_{d\times d } ) $ by 
\begin{align}
\tilde{f} ( {\f y }, \tilde{\f h} ,\tilde{\f  S} ) : = 
f \left ( \f y , \frac{\tilde{\f h}}{\sqrt{1-|\tilde{\f h}|^2}}, \frac{\tilde{\f S}}{\sqrt{1-|\tilde{\f S}|^2}}\right )  ( 1-|\tilde{ \f h}|^2  )( 1- | \tilde{\f S}|^2) \, 
 .\label{transi}
\end{align}
The set of functions for which we are going to identify the limit of~\eqref{limiti} are the functions $f \in \C(Q\times \R^d \times \R^{d\times d })$ whose transform (see~\eqref{transi}) admits a continuous extension onto the closure of the domain. We thus define the following set of functions
\begin{align}
\mathcal{R}:= \left \{ f \in \C ( \ov Q \times \R^d \times \R^{d\times d }  ) | \exists \tilde{ g}\in \C(\ov Q \times \ov{B_d}\times \ov{B_{d\times d}})\, ; \, \tilde{f}= \tilde{g}\text{ on } \ov Q \times B_d \times B_{d\times d}  	\right \}\,.\label{R}
\end{align} 
The initial idea for the representation of limits of sequences like~\eqref{limiti} for functions $f \in \mathcal{R}$ is due to DiPerna and Majda~\cite{DiPernaMajda} and relies heavily on the fact that $\mathcal{R}$ is isometrically isomorphic to $\C( \ov Q\times \ov{B}_d \times \ov B_{d\times d})$ when $\mathcal{R}$ is equipped with an appropriate norm. Thus, it is possible to represent  the limit of~\eqref{limiti} by a measure $\tilde{\nu } \in \M( \ov Q\times  \ov B_d \times \ov B_{d\times d})= \C (\ov Q\times  \ov B_d \times \ov B_{d\times d})^*$.

A \textbf{generalized gradient Young measure} on $\ov \Omega \times [0,T]$ with values in $\R^d \times \R^{d\times d}$ is a triple $( \nu_y^o , m_t , \nu_y^\infty) $ consisting of  
\begin{itemize}
\item a parametrized family of probability measures $\{\nu_y^o\}_{y\in Q} \in \mathcal{P}(\R^{d\times d})$ for a.e.~$y \in Q$,
\item a positive measure $m_t\in \M^+(\ov \Omega )$ for a.e.~$t\in (0,T)$ and
\item a parametrized family of probability measures $\{ \nu_y^\infty\}_{y\in \ov Q} \in \mathcal{P}( \ov B_d \times \Se^{d^2-1})$ for $m_t$-a.e.~$\f x  \in \ov \Omega $ and a.e.~$t\in(0,T)$. 
\end{itemize}
As in~\cite[page 552]{rindler}, we call $\nu^o$ \textit{oscillation measure}, $m_t$ \textit{concentration measure} and $\nu^\infty$ \textit{concentration angle measure}. 

A \textbf{defect measure} on $\ov \Omega \times (0,T)$ with values in $ \R^{d\times d\times d }$
is a pair $(\mu_t, \nu^\mu )$ consisting of 
\begin{itemize}
\item a positive measure $\mu_t\in\M^+(\ov\Omega)$ for a.e.~$t\in (0,T)$ and
\item a parametrized family of probability measures $\{ \nu_y^\mu\}_{y\in \ov Q} \in \mathcal{P}(  \Se^{d^3-1})$ for $\mu_t$-a.e.~$\f x \in \ov \Omega$ and a.e.~$t\in(0,T)$. 
\end{itemize}
We are now able to state the important theorem for generalized gradient Young measures.

\begin{theorem}\label{thm:young}
Let $\{\f d_{\delta}\}_{\delta\in(0,1)}$ be a family of functions bounded in $L^\infty(0,T;\He)$ with $$ \sup_{\delta\in(0,1)} \left \| \nabla \fd d | \fd d |\right \|_{L^\infty(\Le)}< \infty$$
and $\{ \fd d \}$ is relatively compact in $L^2(0,T\Le)$. 
 Then there exists a subsequence $\{\delta_n\}$ and a generalized gradient Young measure such that for all $f\in\mathcal{R}$, we have
\begin{align*}
\int_{Q} f( \f y , \f d_{\delta_n} ( \f y ) , \nabla \f d_{\delta_n} ( \f y) ) \de \f y \ra \ll{f , \nu_t }\, 
\end{align*}
for $\delta_n \ra 0$,  where 
the dual paring $\ll{\cdot,\cdot}$ is defined for a function $f \in \mathcal{R}$ by
\begin{align*}
\ll{f,\nu_t} &:= \int_{\Omega} \langle f(\f x, \f d(\f x,t), \cdot) , \nu^o_{(\f x, t)} \rangle\de \f x + \int_{\ov\Omega}\langle \tilde{f}( \f x, \cdot, \cdot) , \nu_{(\f x, t)}^\infty \rangle m_t (\de \f x)\\
&= \int_{\Omega} \int_{\R^{d\times d} } f(\f x, \f d(\f x, t), \f S)  \nu_{(\f x, t)}^o ( \de \f S)\de \f x + \int_{\ov\Omega}\int_{\Se^{d^2-1} \times \ov B_d} \tilde{f} (\f x,  \tilde{\f h} , \tilde{\f S}) \nu_{(\f x, t)}^\infty ( \de \tilde{\f S}, \de \tilde{\f h}) m_t (\de \f x)\,.
\end{align*}
Additionally, $\nu^o $ is a classical gradient Young measure, i.e. 
\begin{align}
\langle \nu^o _{\f y}, I   \rangle= \int_{\R^{d\times d } }\f S \nu_{\f y} ( \de \f S) = \nabla \f d(\f y)\, \label{gradmeas}
\end{align}
for a.\,e.~$\f y\in Q$.
The function $\tilde{f}$ is the recession function similar to~\eqref{transi} defined by
\begin{align*}
\tilde{f}(\f y ,\tilde{\f h} , \tilde{\f S}) : = \lim_{\bar{\f y}\ra \f y  }
\lim_{\genfrac{}{}{0pt}{}{\bar{\f S}\ra \tilde{\f S}, | \bar{\f S}|<1}{\bar{\f h}\ra \tilde{\f h}, | \bar{\f h}|<1}}  f\left ( \bar{\f y} , \frac{\tilde{\f h}}{\sqrt{1-| \tilde{\f h}|^2}}, \frac{\tilde{\f S}}{\sqrt{1-| \tilde{\f S}|^2}} \right ) ( 1- | \tilde{\f h}|^2 ) ( 1- | \tilde{\f S}|^2 )
\,,
\end{align*}
with $(\f y, \ft h, \ft S)\in \ov Q \times \ov B_d \times \ov B_{d\times d}$. 
\end{theorem}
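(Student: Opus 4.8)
\textbf{Step 1: Reduction to a measure on a compact set via the transform.} The plan is to exploit the isometric isomorphism between $\mathcal{R}$ (equipped with the norm $\|f\|_{\mathcal{R}} := \|\tilde f\|_{\C(\ov Q \times \ov B_d \times \ov B_{d\times d})}$) and $\C(\ov Q \times \ov B_d \times \ov B_{d\times d})$, exactly as in DiPerna--Majda. For each $\delta$, define the ``lifted'' measure $\tilde\nu_\delta \in \M(\ov Q \times \ov B_d \times \ov B_{d\times d})$ by pushing forward the Lebesgue measure on $Q$ under the map $\f y \mapsto (\f y, T(\f d_\delta(\f y)), T(\nabla \f d_\delta(\f y)))$, where $T(\f z) = \f z / (1+\sqrt{1+|\f z|^2})$ (or any fixed homeomorphism $\R^k \to B_k$ compatible with~\eqref{transi}), and weighting so that testing against $\tilde g$ reproduces $\int_Q f(\f y, \f d_\delta, \nabla\f d_\delta)\,\de\f y$. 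The key a priori bound is that these measures are uniformly bounded in total variation: this follows because the weight $(1-|\tilde{\f h}|^2)(1-|\tilde{\f S}|^2)$ in~\eqref{transi}, pulled back, is integrable uniformly — one uses $|\f d_\delta|$ bounded in $L^\infty(\He)$, hence in $L^2$ uniformly in space-time, and crucially the bound on $\|\nabla\f d_\delta\,|\f d_\delta|\|_{L^\infty(\Le)}$ to control the ``concentration weight'' tied to large values of $\nabla\f d_\delta$. By weak-$*$ sequential compactness of bounded sets in $\M$ of a compact metric space, extract a subsequence $\delta_n$ with $\tilde\nu_{\delta_n} \rightharpoonup^* \tilde\nu$.

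\textbf{Step 2: Slicing and disintegration.} Disintegrate $\tilde\nu$. First slice in time: since the time variable passes through untouched, $\tilde\nu$ projects onto Lebesgue measure on $(0,T)$, yielding $t \mapsto \tilde\nu_t \in \M(\ov\Omega \times \ov B_d \times \ov B_{d\times d})$ for a.e.\ $t$. Split each $\tilde\nu_t$ into the part carried on $\Omega \times B_d \times B_{d\times d}$ (the ``interior/oscillation'' part) and the part carried on the complement, i.e.\ where $|\tilde{\f h}|=1$ or $|\tilde{\f S}|=1$. For the oscillation part, undoing the transform $T^{-1}$ (which maps $B_{d\times d}$ back to $\R^{d\times d}$) produces a measure on $\Omega \times \R^{d\times d}$; project onto $\Omega$ and use that $\f d_{\delta_n} \to \f d$ strongly in $L^2(\Le)$ (so the $\f h$-marginal concentrates on the graph of $\f d$) to reduce to a family $\{\nu^o_{(\f x,t)}\}$ on $\R^{d\times d}$, parametrized by $\f x$; the projection onto $\Omega$ is absolutely continuous (dominated by Lebesgue measure via the total-variation bound on the oscillation part), so these are probability measures after normalization. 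The remaining (concentration) part gives $m_t \in \M^+(\ov\Omega)$ and $\{\nu^\infty_{(\f x,t)}\}$ on $\ov B_d \times \Se^{d^2-1}$ — here the support in the second factor lies on the sphere $|\tilde{\f S}|=1$ precisely because it is the gradient (not $\f d$) that carries the concentration, and the $\f h$-component stays in $\ov B_d$. The announced representation $\ll{f,\nu_t}$ is then just the recombination of these two pieces, and the convergence statement follows by testing $\tilde\nu_{\delta_n}\rightharpoonup^*\tilde\nu$ against the continuous function $\tilde g$ corresponding to $f \in \mathcal{R}$.

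\textbf{Step 3: The gradient constraint~\eqref{gradmeas}.} To see $\nu^o$ is a classical gradient Young measure with barycenter $\nabla\f d$, test the convergence against functions $f(\f y,\f h,\f S) = \f S_{ij}\,\chi(\f y)$ for $\chi \in \C_c(Q)$ — but such $f$ is not in $\mathcal{R}$ because of linear growth in $\f S$, so instead use a truncation/approximation argument: apply the representation to $f_k$ with $f_k(\f y,\f h,\f S) = \phi_k(\f S_{ij})\chi(\f y)$ for a smooth bounded cutoff $\phi_k$ of the identity, pass $\delta_n \to 0$ (using $\nabla\f d_{\delta_n} \rightharpoonup \nabla\f d$ in $L^2$, which holds since $\f d_{\delta_n}$ is bounded in $L^\infty(\He)$ and $\f d_{\delta_n}\to\f d$ strongly in $L^2$), then $k\to\infty$, controlling the error by the concentration mass $m_t$; the point is that the linear-growth tail, by Alibert--Bouchitt\'e, only charges the concentration measure, and when computing the barycenter the concentration part drops out because $\nabla\f d \in L^2$ has no concentration, forcing the oscillation barycenter to equal $\nabla\f d$ a.e.

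\textbf{Main obstacle.} The technical heart — and the step I expect to demand the most care — is \emph{Step 2}: justifying the simultaneous disintegration in $(t,\f x)$ and cleanly separating oscillation from concentration, in particular proving that the $\f h$-marginal of the \emph{oscillation} part is exactly $\delta_{\f d(\f x,t)}$ (needs the strong $L^2$ convergence of $\f d_{\delta_n}$, plus an argument that no concentration in $\f h$ survives because $\f d_{\delta_n}$ is bounded in $L^\infty(\He)\hookrightarrow L^6$) and that the concentration in $\f S$ lives on the full sphere $\Se^{d^2-1}$ while $\f h$ stays in $\ov B_d$. Making the measurability of $(\f x,t)\mapsto \nu^o_{(\f x,t)}$, $\nu^\infty_{(\f x,t)}$ rigorous requires invoking a measurable disintegration theorem on the compact metric space, and checking that the null sets line up. The uniform total-variation bound in Step 1 also needs the $\|\nabla\f d_\delta\,|\f d_\delta|\|_{L^\infty(\Le)}$ hypothesis used in a slightly nonobvious way, which is the reason this particular (weighted) transform is the right one rather than the naive DiPerna--Majda setup.
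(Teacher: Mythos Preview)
Your approach is essentially the same as the paper's: lift to measures on the compactified space via the transform~\eqref{transi}, extract a weak$^*$ limit, disintegrate, and separate oscillation from concentration. The paper splits the argument into two propositions---first carrying out the full disintegration over $\ov Q$ (your Steps~1--2), and only afterwards showing that the concentration measure $m$ disintegrates as $m_t\otimes\de t$.

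Two corrections. First, your claim in Step~2 that ``$\tilde\nu$ projects onto Lebesgue measure on $(0,T)$'' because ``the time variable passes through untouched'' is not automatic: the measures $\tilde\nu_\delta$ carry the weight $(1+|\f d_\delta|^2)(1+|\nabla\f d_\delta|^2)$, and in the limit their time-marginal could in principle concentrate. That it does not---i.e.\ that the time-marginal is absolutely continuous with respect to $\de t$---is precisely where the hypothesis $\sup_\delta\|\nabla\f d_\delta\,|\f d_\delta|\|_{L^\infty(\Le)}<\infty$ (as opposed to merely $L^2(\Le)$) is used, via testing against $\phi(t)$ and bounding by $\|\phi\|_{L^1}$. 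This is the content of the paper's Proposition~\ref{lem:timemeas}; you should not fold it into the disintegration step as though it were free.

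Second, Step~3 is simpler than you make it. The function $f(\f y,\f h,\f S)=\f S_{ij}\chi(\f y)$ \emph{is} in $\mathcal{R}$: its transform is $\tilde f(\f y,\tilde{\f h},\tilde{\f S})=\tilde{\f S}_{ij}\sqrt{1-|\tilde{\f S}|^2}\,(1-|\tilde{\f h}|^2)\chi(\f y)$, which extends continuously to $\ov B_d\times\ov B_{d\times d}$ (vanishing on $|\tilde{\f S}|=1$). Hence the recession part contributes nothing, and~\eqref{gradmeas} follows directly from $\nabla\f d_\delta\rightharpoonup\nabla\f d$ in $L^2(Q)$ together with the representation---no truncation or Alibert--Bouchitt\'e argument needed.
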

The \textit{proof} of Theorem~\eqref{thm:young} is split in two propositions, Proposition~\ref{lem:meas} and Propositions~\ref{lem:timemeas}.



\begin{proposition}\label{lem:meas}
Let $\{\fd d\}$ be a sequence with $$\sup_{\delta\in (0,1)} \left (\|\nabla \fd d\|_{\Le(Q)} + \left \| \nabla \fd d|\fd d|\right \|_{\Le(Q)} \right ) <\infty$$ and $\{\fd d\}$ is relatively compact in $L^2( 0,T; \f L^2)$.  Additionally, we assume that $f \in \mathcal{R}$. Then there exists a measure $m\in\M(\ov Q)$, two families of measures $\{\nu^o_{\f y}\}_{\f y\in Q} $ and $ \{\nu^\infty_{\f y} \}_{\f y\in Q} $ such that $ \nu^o_{ \f y} \in \mathcal{P}(\R^{d\times d})$ and $\nu^{\infty}_{\f y}\in \mathcal{P}(B_d \times \Se^{d^2-1})$ and 
\begin{align}
f( \f y, \fd d , \nabla \fd d ) \stackrel{*}{\rightharpoonup} \int_{\R^{d\times d} } f (\f d( \f y),\f S){\nu}^o_{\f y}( \de \f S) 
 + \int_{\Se^{ d^2 -1} \times \overline{B}_d} \tilde{f}(\ft h, \ft S) \nu^\infty_{\f y} (\de \ft h,  \de \ft S)m
\quad \text{in }\M(\ov \Omega) \, .\label{convinS}
\end{align}
The measure $\nu^o$ fulfils~\eqref{gradmeas} almost everywhere.
\end{proposition}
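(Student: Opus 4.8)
\medskip

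The plan is to reduce the representation of limits of $f(\f y,\fd d,\nabla\fd d)$ to the classical DiPerna--Majda machinery by exploiting the isometric isomorphism $\mathcal{R}\cong\C(\ov Q\times\ov B_d\times\ov B_{d\times d})$. First I would introduce the change of variables that compactifies both the $\f h$- and $\f S$-variables: for the sequence $\{\fd d\}$ set $\tilde{\f h}_\delta := \fd d/\sqrt{1+|\fd d|^2}$ and $\tilde{\f S}_\delta := \nabla\fd d/\sqrt{1+|\nabla\fd d|^2}$, so that $\tilde{\f h}_\delta\in B_d$, $\tilde{\f S}_\delta\in B_{d\times d}$ pointwise. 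Using~\eqref{transi}, one checks the algebraic identity
\begin{align*}
f(\f y,\fd d(\f y),\nabla\fd d(\f y)) = \frac{\tilde f(\f y,\tilde{\f h}_\delta(\f y),\tilde{\f S}_\delta(\f y))}{(1-|\tilde{\f h}_\delta(\f y)|^2)(1-|\tilde{\f S}_\delta(\f y)|^2)}
= \tilde f(\f y,\tilde{\f h}_\delta,\tilde{\f S}_\delta)\,\bigl(1+|\fd d|^2\bigr)\bigl(1+|\nabla\fd d|^2\bigr),
\end{align*}
which exhibits the original quantity as a bounded continuous function $\tilde f$ of the compactified variables, multiplied by the ``mass'' factor $w_\delta := (1+|\fd d|^2)(1+|\nabla\fd d|^2)$. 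The key point is that $w_\delta$ is bounded in $L^1(Q)$: indeed $|\nabla\fd d|^2\,|\fd d|^2$, $|\nabla\fd d|^2$ and $|\fd d|^2$ are each bounded in $L^1(Q)$ by hypothesis (for $|\fd d|^2$ one uses the $L^2(Q)$-compactness, hence $L^2(Q)$-boundedness, of $\{\fd d\}$), so the cross term in the product is controlled, and $1$ integrates to $|Q|$.

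\medskip

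Next I would package the data into a single measure on the compactified space. Define the Radon measures $\tilde\sigma_\delta\in\M^+(\ov Q\times\ov B_d\times\ov B_{d\times d})$ by $\tilde\sigma_\delta := \bigl(\mathrm{id},\tilde{\f h}_\delta,\tilde{\f S}_\delta\bigr)_{\#}\bigl(w_\delta\,\de\f y\bigr)$, i.e. the pushforward of the weighted Lebesgue measure under the map $\f y\mapsto(\f y,\tilde{\f h}_\delta(\f y),\tilde{\f S}_\delta(\f y))$. By the $L^1(Q)$-bound on $w_\delta$ the total masses $\tilde\sigma_\delta(\ov Q\times\ov B_d\times\ov B_{d\times d})=\|w_\delta\|_{L^1(Q)}$ are uniformly bounded, so by weak-$*$ compactness of bounded sets in $\M(K)=\C(K)^*$ for the compact set $K=\ov Q\times\ov B_d\times\ov B_{d\times d}$, a subsequence $\{\delta_n\}$ satisfies $\tilde\sigma_{\delta_n}\stackrel{*}{\rightharpoonup}\tilde\sigma$ in $\M(K)$. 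Testing against $\tilde g\in\C(K)$ and using that for $f\in\mathcal{R}$ the transform extends continuously gives, for every $f\in\mathcal{R}$,
\begin{align*}
\int_Q f(\f y,\fd d,\nabla\fd d)\,\varphi(\f y)\,\de\f y = \int_K \varphi(\f y)\,\tilde f(\f y,\tilde{\f h},\tilde{\f S})\,\tilde\sigma_{\delta_n}(\de\f y,\de\tilde{\f h},\de\tilde{\f S}) \;\longrightarrow\; \int_K \varphi(\f y)\,\tilde f(\f y,\tilde{\f h},\tilde{\f S})\,\tilde\sigma(\de\f y,\de\tilde{\f h},\de\tilde{\f S})
\end{align*}
for all $\varphi\in\C(\ov Q)$, which is the asserted weak-$*$ convergence in $\M(\ov\Omega)$ once the limit measure is disintegrated. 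To obtain the stated form I would split $\tilde\sigma$ according to the ``interior'' part $\{|\tilde{\f S}|<1\}$ and the ``boundary at infinity'' part $\{|\tilde{\f S}|=1\}$: disintegrating the $\f y$-marginal of the interior part (which is absolutely continuous w.r.t. $\de\f y$ because there $w$ is finite and the map $\nabla\fd d\mapsto\tilde{\f S}_\delta$ does not concentrate) and pushing back through the inverse change of variables yields the oscillation measure $\nu^o_{\f y}\in\mathcal{P}(\R^{d\times d})$; the part on $\{|\tilde{\f S}|=1\}$ has $\f y$-marginal $m\in\M^+(\ov Q)$ and disintegration $\nu^\infty_{\f y}\in\mathcal{P}(\Se^{d^2-1}\times\ov B_d)$. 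Dividing by the original mass factor $(1-|\tilde{\f h}|^2)(1-|\tilde{\f S}|^2)$ in the interior part converts $\tilde f$ back to $f$ and produces exactly~\eqref{convinS}. That $\nu^o_{\f y}$ has full mass $1$ for a.e.\ $\f y$ (rather than being sub-probability) follows by testing with $f\equiv 1$, whose transform is $(1-|\tilde{\f h}|^2)(1-|\tilde{\f S}|^2)$, vanishing on $\{|\tilde{\f S}|=1\}$, so the constant function sees only the interior part and forces $\int\de\nu^o_{\f y}=1$; the concentration angle measure $\nu^\infty$ being supported in $\Se^{d^2-1}\times\ov B_d$ (no concentration in the $\f h$-variable) is the place where one invokes the Alibert--Bouchitt\'e observation, using the $L^2(Q)$-compactness of $\{\fd d\}$: strong $L^2$-convergence of $\fd d$ prevents concentration of $|\fd d|^2$, so all concentration mass lives over $|\tilde{\f h}|<1$... wait, more precisely it can still reach $|\tilde{\f h}|=1$ only on a $\de\f y$-null set where $|\fd d|$ blows up, but the bound $\|\nabla\fd d\,|\fd d|\|_{L^2}\le c$ forces the angle part to lie in $\ov B_d$ in the $\f h$-slot; I would carry this out by testing with functions of the form $f(\f y,\f h,\f S)=\phi(\f h)\,|\f S|^2\,\psi(\f y)$ and $f=\phi(\f h)|\f h|^2|\f S|^2\psi(\f y)$ and comparing.

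\medskip

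Finally, the gradient-Young-measure identity~\eqref{gradmeas}, $\int_{\R^{d\times d}}\f S\,\nu^o_{\f y}(\de\f S)=\nabla\f d(\f y)$, I would prove by testing~\eqref{convinS} with the linear function $f(\f y,\f h,\f S)=\f S:\f B$ for constant $\f B\in\R^{d\times d}$ times $\varphi\in\C^\infty_c(Q)$. On the one hand $\nabla\fd d\rightharpoonup\nabla\f d$ in $L^2(Q)$ (using the $L^2$-compactness of $\{\fd d\}$ to identify the weak limit of gradients with the gradient of the strong limit $\f d$), so the left side converges to $\int_Q\nabla\f d:\f B\,\varphi\,\de\f y$. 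On the other hand the transform of $f(\f y,\f h,\f S)=\f S:\f B$ is $\tilde f(\f y,\tilde{\f h},\tilde{\f S})=\frac{\tilde{\f S}:\f B}{\sqrt{1-|\tilde{\f S}|^2}}(1-|\tilde{\f h}|^2)(1-|\tilde{\f S}|^2)$, which is \emph{not} bounded near $|\tilde{\f S}|=1$ — so strictly this $f\notin\mathcal{R}$, and I would instead use a truncation $f_R(\f S)=\f S:\f B\cdot\chi_R(|\f S|)$ with $\chi_R$ a smooth cutoff, apply~\eqref{convinS} to each $f_R\in\mathcal{R}$, and let $R\to\infty$, using the $L^1$-equi-integrability provided by the uniform $L^2(Q)$-bound on $\nabla\fd d$ (de la Vall\'ee-Poussin) to kill the tail and the $L^2$-weak convergence on the truncated part. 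The concentration contribution $\int\tilde f_R\,\nu^\infty\,m$ then vanishes in the limit because $\tilde f_R$ is uniformly bounded in $R$ on the recession sphere only up to a factor growing like $R$, which is dominated by the $\de\f y$-equi-integrability — this is exactly the step that records that first moments do not concentrate. The main obstacle I anticipate is precisely this last point: carefully separating oscillation from concentration in the first-moment identity, i.e. justifying that $\nu^o$ alone (with no help from $m,\nu^\infty$) reproduces $\nabla\f d$, which requires the equi-integrability of $\{|\nabla\fd d|\}$ in $L^1(Q)$ coming from its $L^2(Q)$-boundedness, together with a clean disintegration argument to legitimately restrict attention to the interior part $\{|\tilde{\f S}|<1\}$ of the limit measure.
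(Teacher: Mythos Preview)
Your overall route is exactly the paper's: compactify via $\tilde{\f h}_\delta=\fd d/\sqrt{1+|\fd d|^2}$, $\tilde{\f S}_\delta=\nabla\fd d/\sqrt{1+|\nabla\fd d|^2}$, push forward the weight $w_\delta=(1+|\fd d|^2)(1+|\nabla\fd d|^2)$ to get measures $L_\delta\in\M(\ov Q\times\ov B_d\times\ov B_{d\times d})$, pass to a weak-$*$ limit, and disintegrate. Your truncation argument for~\eqref{gradmeas} is correct and in fact more explicit than the paper's one-line ``weak convergence of $\nabla\fd d$ plus~\eqref{convinS}''.

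There is, however, one genuine gap. You write that the interior part $\{|\tilde{\f S}|<1\}$ yields, after pushback, an oscillation measure $\nu^o_{\f y}\in\mathcal{P}(\R^{d\times d})$ --- a measure on the $\f S$-variable only --- without explaining why the $\f h$-slot collapses to the single point $\f d(\f y)$. A priori the disintegration of the interior part is a measure on $\R^d\times\R^{d\times d}$; the statement~\eqref{convinS} claims the $\f h$-dependence enters only through evaluation at $\f d(\f y)$. This is precisely where the relative compactness of $\{\fd d\}$ in $L^2(Q)$ is used in an essential way, and you only invoke it later and vaguely. The paper's device is to test with $f(\f y,\f h,\f S)=\phi(\f y,\f h)(1+|\f h|^2)$ for $\phi\in\C_b(\ov Q\times\R^d)$: the transform is $\tilde f=\phi(\f y,\tilde{\f h}/\sqrt{1-|\tilde{\f h}|^2})(1-|\tilde{\f S}|^2)$, which vanishes on $\{|\tilde{\f S}|=1\}$, so only the interior part contributes; on the other hand, by pointwise a.e.\ convergence $\fd d\to\f d$ and an $L^2$ dominating function, the left-hand side converges to $\int_Q\phi(\f y,\f d(\f y))(1+|\f d(\f y)|^2)\,\de\f y$. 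Comparing, one reads off that the $\tilde{\f h}$-disintegration of the interior part is the Dirac mass at $\f d(\f y)/\sqrt{1+|\f d(\f y)|^2}$ for a.e.\ $\f y$. Your ``Alibert--Bouchitt\'e / no concentration in $\f h$'' remarks circle this step but do not supply it.

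A second, smaller point: your claim that the $\f y$-marginal of the interior part is absolutely continuous ``because $w$ is finite and the map does not concentrate'' is not an argument. The paper gets this cleanly from the same family of tests: taking $f(\f h,\f S)=1+|\f h|^2$ gives $\tilde f=1-|\tilde{\f S}|^2$, whose limit is $1+|\f d(\f y)|^2$ (Lebesgue-a.e.), which forces $\int(1-|\tilde{\f S}|^2)\,\bar\nu_{\f y}(\de\tilde{\f S})\,p(\f y)=1+|\f d(\f y)|^2$ on the absolutely continuous part and $=0$ on the singular part $m_s$; hence $m_s$ is supported on $\{|\tilde{\f S}|=1\}$ and $p$ is identified explicitly. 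This simultaneously gives you the support of $\nu^\infty$ in $\ov B_d\times\Se^{d^2-1}$ without any separate argument about the $\f h$-slot of the concentration measure, so the speculative paragraph about testing with $\phi(\f h)|\f S|^2$ and $\phi(\f h)|\f h|^2|\f S|^2$ is unnecessary.
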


\begin{rem}\label{rem:trs}
The transformation~\eqref{transi} does not change functions with quadratic growth in $\f S$ times $\f h$. Indeed, let $ g:\Se^{d-1}\times \Se^{d^2-1} \ra \R $ be continuous and  $f: \R^d \times \R^{d\times d} \ra \R$ be defined via $ f( \f h , \f S ) : = g( \f h / | \f h | , \f S /|\f S|) | \f h |  ^2 | \f S|^2 $. Then we get
\begin{align*}
\tilde{f}(\tilde{\f h}, \tilde{\f  S}) 
&= f \left ( \frac{\ft h }{\sqrt{1-|\ft h|^2}}, \frac{\tilde{\f S}}{\sqrt{1-|\ft S|^2}}\right )( 1-|\tilde{ \f h}|^2  ) ( 1- | \tilde{\f S}|^2) \\
&= g \left (\frac{\tilde{\f h}}{|\tilde{\f h}|}, \frac{\tilde{\f S}}{|\tilde{\f S}|}\right ) \frac{|\tilde{\f h}|^2}{1-|\tilde{\f h|^2}}\frac{|\tilde{\f S}|^2}{1-|\tilde{\f S}|^2} ( 1-|\tilde{ \f h}|^2  ) ( 1- | \tilde{\f S}|^2)\\
&= g \left (\frac{\tilde{\f h}}{|\tilde{\f h}|}, \frac{\tilde{\f S}}{|\tilde{\f S}|}\right ) | \tilde{\f h}|^2  | \tilde{\f S}|^{2} = f \left (\tilde{\f h}, \tilde{\f S}\right ) \,.
\end{align*}
Most of the appearing terms in  Definition~\ref{def:meas} have the above  growth behaviour. This implies that the transformation of $\f h \times F_{\f h}(\f h ,\f S)$ remains the function itself. Only the linear terms in $F_{\f S}$ are changed by multiplying them with $1-|\tilde{\f h}|^2$,
 such that for example
\begin{align*}
\widetilde{\f S ^T F_{\f S}}(\tilde{\f h}, \tilde{\f S})= \tilde{\f S}^T F_{\f S}(\tilde{\f h}, \tilde{\f S})- k_1 | \tilde{ \f h }|^2 \tr{(\tilde{\f S})}\tilde{\f S}^T - k_2| \tilde{\f h}|^2 \tilde{\f S}^T ( \tilde{\f S})_{\skw}\,.
\end{align*}
\end{rem}
\begin{proof}[Proof of Proposition~\ref{lem:meas}]
We define the family of measures $\{L_\delta	\}_\delta \subset  \mathcal{M}( \ov{Q}\times \overline{B}_d \times \overline{B}_{d\times d })$ via 
\begin{align}
\langle L_\delta, g \rangle := \int_{Q} g\left ( \f y ,\frac{\fd d (\f y)}{\sqrt{1+|\fd d (\f y)|^2}}, \frac{\nabla \fd d (\f y)}{\sqrt{1+|\nabla \fd d(\f y)|^2}}  \right ) (1+  | \fd d|^2)(1+| \nabla \fd d|^2 ) \mu (\de \f y) \, ,
\end{align}
where  $g \in \C_b( \ov{Q}\times \overline{B}_d \times \overline{B}_{d\times d })$. 
Due to our a priori estimates for the approximate solutions, we see that for all $g \in \C_b(\ov{Q}\times \overline{B}_d \times \overline{B}_{d\times d })$ with $\|g\|_{ \C(\ov {Q}\times \overline{B}_d \times \overline{B}_{d\times d })}\leq 1$, we have
\begin{align*}
\sup_{\delta\in(0,1)}\langle L_\delta ,g \rangle <\infty\, . 
\end{align*}
 Via standard arguments, we first extract a sequence $\{\delta_k\}$ such that $\delta_k \ra 0$ and then a  weakly$^*$ converging subsequence $\{\delta_n \}\subset \{\delta_k\}$ with   
\begin{align*}
L_{\delta_n} \stackrel{*}{\rightharpoonup}  L  \quad\text{in  } \mathcal{M}(\ov {Q}\times \overline{B}_d \times \overline{B}_{d\times d })\, .
\end{align*}
In the following, the subsequences are not relabled any more.
The canonical projection of $L$ onto $\ov Q$ will be called $\tilde{m}$, i.e.~$
\tilde{m}(E) :=  L ( E \times \overline{B}_d \times \overline{B}_{d\times d})
$
for all Borel sets $E\subset\ov Q$.
The classical desintegration argument for measures (see Evans~\cite[Theorem~10.]{evans} or Fonseca~\cite[Proposition 3.2.]{Fonseca}) provides the existence of a probability measure $\tilde{\nu}_{\f y}\in \mathcal{P}(  \overline{B}_d \times \overline{B}_{d\times d} , \tilde{m} ) $ such that
\begin{align}
\langle L,g \rangle = \int_{\overline{Q}} \int_{ \overline{B}_d \times \overline{B}_{d\times d}  }g( \f y , \f h , \f S) \tilde{\nu}_{\f y} ( \de \f h, \de S) \tilde{m}( \de \f y)\label{convmeasures} \,. 
\end{align}

Since $\tilde{m}$ is a measure on $\ov{Q}$, we now consider its {Radon--Nikod\'{y}m--Lebesgue}-de\-com\-po\-si\-tion (see Evans and Gariepy~\cite[section 1.6.2]{evans2} or Halmos~\cite[Section 32, Theorem C]{halmos})  with respect to the Lebesgue measure. There exists a function $p\in L^1(Q)$ and a measure $m_s \in \mathcal{M} ( \ov Q) $ such that
\begin{align*}
\tilde{m}( \de \f y ) = p( \f y)  \de \f y + m_s ( \de \f y) \, .
\end{align*}
The measure $m_s$ and the Lebesgue measure are then mutually singular. Remark that $\de $ without specifying the measure always means integration with respect to the Lebesgue measure.

Applying the desintegration theorem a second time (see Evans and Gariepy~\cite[section 1.6.2]{evans2}), we get
$$ \tilde{\nu}_{\f y } = \nu_{\f y,\ft S}^{\f d} \otimes   \bar{\nu}_{\f y} \, . $$
Here, $\nu_{\f y,\ft S}^{\f d} $ and $  \bar{\nu}_{\f y}$ are both probability measures with respect to $ \bar{\nu}_{\f y}$ and $ \tilde{m}$, respectively. 

Now taking $\tilde{f}$ as the constant function $1$,  $ \tilde{f}\equiv 1$, one gets the convergence
\begin{align*}
(1 + |\fd d|^2)(1 + |\nabla \fd d|^2)   \stackrel{*}{\rightharpoonup} \tilde{m}
\end{align*}
weakly$^*$ in $ \M(\ov Q)$.  This implies $p(\f y)\geq 1$ almost everywhere in $Q$ and $\tilde{m}\in \M^+(\ov Q)$.

Recall that the relative compactness of $\fd d$  in $\f L^2(Q)$ implies the strong convergence of a (not relabled) subsequence $\fd d$ to $\f d$ in $\f L^2(Q)$ and consequently the point-wise convergence of $\fd d(\f y)$ to $ \f d ( \f y)$ a.e.~in $Q$ as well as the existence of a dominating function in $\f L^2(Q)$.

Consider the function $f(\f h , \f S)=1+|\f h|^2$ the associated transformed function (see~\eqref{transi}) is given by $ \tilde{f}( \f y ,\tilde{ \f h} , \tilde{\f S}) = ( 1 - |\tilde{\f S}|^2)$. Inserting this function into  \eqref{convmeasures} yields
\begin{align*}
1+ | \f d( \f y) |^2 = \int_{\overline{B}_{d\times d } }\int_{\overline{B}_{d } } ( 1 - |\tilde{\f S}|^2) \tilde{\nu}_{\f y }(\de \ft h,\de \ft S) (p( \f y)  + m_s ) \,.
\end{align*}
The  function $( 1 - |\tilde{\f S}|^2)$  only vanishes on the set where the norm of  $\tilde{S}$ is equal to one, i.e.~on the set $\overline{B}_d \times \Se^{d^2-1}$. The measure $m_s$ was mutually singular, which now shows that 
\begin{align*}
 \int_{{B}_{d\times d } }  ( 1 - |\tilde{\f S}|^2)  \bar{\nu}_{\f y }(\de \tilde{\f S}) p( \f y) & =1+ |\f d(\f y)|^2\quad \text{ a.e.~with respect to the Lebesgue measure,} \\
 \int_{\overline{B}_{d\times d } } ( 1 - |\tilde{\f S}|^2) \bar{\nu}_{\f y }(\de \tilde{\f S})   & =0 \quad \text{ a.e.~with respect to } m_s\, .
\end{align*}
This now allows us to assign $p$ as 
\begin{align}
p (\f y ) := \left (\int_{{B}_{d\times d } }  ( 1 - |\tilde{\f S}|^2)  \bar{\nu}_{\f y }(\de \tilde{\f S})\right )^{-1}(1+| \f d(\f y)|^2)\,\label{p}
\end{align}
and to deduce that $ \tilde{\nu}_{\f y }$ is supported on $\overline{B}_d \times \Se^{d^2-1}$ $m_s$ a.e.~on $\ov Q$.

For $\phi \in \C_{b}(\ov Q \times \R^d) $ we consider the test function $ f ( \f y, \f h, \f S) : = \phi (\f y, \f h ) ( 1+ | \f h|^2)$. 
On the one hand, due to the point-wise strong convergence of $\fd d$  to $\f d$ in $Q$ (see~\eqref{sr:d}) and the dominating function in $\f L^2(Q)$ we get that 
\begin{align*}
\lim_{\delta \ra 0} \int_Q f ( \f y , \fd d ( \f y ) , \nabla \fd d( \f y) )\de \f y  = \lim_{\delta \ra 0}\int_Q\phi(\f y, \fd d ) ( 1 + | \fd d(\f y)|^2)\de \f y = \int_Q\phi(\f y, \f d ) ( 1 + | \f d(\f y)|^2)\de \f y \,.
\end{align*}
On the other hand, the convergence result~\eqref{convmeasures} implies
\begin{align*}
\lim_{\delta \ra 0} f ( \f y , \fd d ( \f y ) , \nabla \fd d( \f y) )  \rightharpoonup ^* 
\int_{\overline{B}_{d\times d } } \int_{\overline{B}_d }
\phi\left  (\frac{\ft h}{\sqrt{1-| \ft h|^2}}\right ) 
 \nu_{\f y, \ft S}^{\f d}(\de \ft h ) (1-|\ft S|^2) \bar{\nu}_{\f y }( \de \ft S) \tilde{m}\,.
\end{align*}
Using~\eqref{p}, the definition of the measure $\tilde{m}$, and since $\nu_{\f y, \ft S}^{\f d}$ is a probability measure, we get
\begin{align*}
0 =\int_{\Omega}\int_{\overline{B}_{d\times d } } \int_{\overline{B}_d }\left ( \phi(\f y, \f d (\f y)) - 
\phi\left  (\f y, \frac{\ft h}{\sqrt{1-| \ft h|^2}}\right ) \right ) 
 \nu_{\f y, \ft S}^{\f d}(\de \ft h ) (1-|\ft S|^2) \bar{\nu}_{\f y }( \de \ft S) \tilde{m}(\de \f y)\,.
\end{align*}
We see that the function vanishes for all values of $( \ft h, \ft S)$ with $| \ft S|<1$. This means that the measure $ \nu_{\f y, \ft S}^{\f d}$ is concentrated on $\f d(\f y)/\sqrt{1+|\f d (\f y)|^2}$ for 
$ \bar{\nu}_{\f y }$ a.\,e.~$\f S \in B_{d\times d}$ and $\tilde{m}$ a.e.~
$  \f y\in \ov Q $. 

With the additional properties of $\tilde{\nu}_{\f y}$ we now define the projections of this measure onto the interior and the boundary of $ \overline{B}_{d\times d }$. For a continuous bounded function  $\varphi\in \C_0(\R^{d\times d}) $ we define the measure ${\nu}^o_{\f y}\in \mathcal{P}( \R^{d\times d})$ via 
\begin{align*}
\int_{\R^{d\times d} } \varphi (\f S){\nu}^o_{\f y}( \de \f S) &: = \frac{1}{1+ | \f d( \f y)|^2}\int_{B_{d\times d} }\varphi \left ( \frac{\ft S}{\sqrt{1- | \ft S|^2}} ) \right ) ( 1 - |\ft S|)^{2}\bar{\nu}_{\f y} (\de \ft S) p( \f y) 
\, .
\end{align*}
With the considerations above, we see that the following identity holds for all functions~$\varphi\in \C_0(\R^d\times\R^{d\times d})$:
 \begin{align*}
 \int_{\R^{d\times d} }& \varphi (\f d(\f y),\f S){\nu}^o_{\f y}( \de \f S)\\ &= \frac{1}{1+ | \f d( \f y)|^2}\int_{B_{d\times d} }\varphi \left (\f d ( \f y), \frac{\ft S}{\sqrt{1- | \ft S|^2}} ) \right ) ( 1 - |\ft S|)^{2}\bar{\nu}_{\f y} (\de \ft S) p( \f y) \\
 &= \int_{B_{d\times d} }\int_{\overline{B}_{d}}\varphi \left ( \frac{\ft h}{\sqrt{1-|\ft h|^2}}, \frac{\ft S}{\sqrt{1-|\ft S|^2}}\right ) ( 1- | \ft h |^2)( 1 - |\ft S|^{2})\nu_{\f y, \ft S}^{\f d}(\de \ft h ) \bar{\nu}_{\f y} (\de \ft S) p( \f y)\\
 &= \int_{B_{d\times d} }\int_{\overline{B}_{d}}\tilde{\varphi}  (\ft h, \ft S)  \tilde{\nu}_{\f y} (\de \ft h , \de \ft S) p( \f y)\,.
 \end{align*}
 
 Additionally, we basically take the remaining part of the measure $\tilde{m}$, which is supported on $\overline{B}_d \times \Se^{d^2-1}$, and define the measure $m$ 
via 
\begin{align*}
 m &:= (p( \f y) + m_s) \tilde{\nu}_{\f y}(\overline{B}_d \times \Se^{d^2-1}) 
\, .
\end{align*}
 The probability measure $ \nu^{\infty}_{ \f y} $ on $\overline{B}_d \times\Se^{d^2-1}$ is defined for every continuous bounded function  $\varphi \in \C_b( \overline{B}_d \times\Se^{d^2-1})$ via
\begin{align*}
\int_{\overline{B}_d}\int_{\Se^{ d^2 -1} } \varphi(\ft h, \ft S) \nu^\infty_{\f y} (\de \ft h, \de \ft S) := \frac{ 1 }{\tilde{\nu }_{\f y} ( \overline{B}_d \times \Se^{d^2-1})} \int_{\overline{B}_d}\int_{S^{d^2-1}} \varphi( \ft h, \ft S)  \tilde{\nu }_{\f y} (\de \ft h , \de \ft S) \, .
\end{align*}
This different definitions taken together imply
\begin{align*}
\int_{\R^{d\times d} } f (\f y,\f d(\f y),\f S){\nu}^o_{\f y}( \de \f S)p(\f y)\mu &+ \int_{\overline{B}_d}\int_{\Se^{ d^2 -1} } \tilde{f}(\f y,\ft h, \ft S) \nu^\infty_{\f y} (\de \ft h ,\de \ft S)m \\
&= \int_{\overline{B}_{d\times d } } \int_{\overline{B}_d } \tilde{f}( \f y , \ft h , \ft S )  \tilde{\nu}_{\f y }(\de \ft h\de \ft S) \tilde{m}
%
%
\end{align*}
for all $f \in\mathcal{R}$.
Inserting the new defined measures into the convergence result~\eqref{convmeasures} gives the asserted result~\eqref{convinS}.
The weak convergence of $\nabla \fd  d$ and~\eqref{convinS} imply the asserted equation~\eqref{gradmeas}.

\end{proof}

\begin{rem}
The biting limit of a sequence as given in Proposition~\ref{lem:meas} is given by the classical Young measure generated by this sequence. For functions $f\in \mathcal{R}$,
 we can deduce
\begin{align}
f(\f y, \fd d , \nabla \fd d  )  \stackrel{b}{\rightharpoonup}  \int_{\R^{d \times d}} f ( \f y, \f d( \f y),\f  S) \nu_{ \f y} ( \text{\emph{d}} \f S) \, .
\end{align}
It also holds that for $f \in\mathcal{R}$, the sequence $ f(\cdot , \fd d , \nabla \fd d) $ is weakly convergent in $\f L^1( Q)$ if and only if $$
\int_{\overline{B}_d}\int_{\Se^{ d^2 -1} } \tilde{f}(\f y, \ft h, \ft S) \nu^\infty_{\f y} (\de \ft h \de \ft S)m=0\, .$$ 
Moreover, $|\nabla \fd d |^2| \fd d |^2 $ is weakly convergent in $L^1( Q)$ if and only if the measure $m$ vanishes.

The proof of this result is obtained by adapting all the steps in the proof of Theorem~9 in~\cite{alibert} to the case of Proposition~\ref{lem:meas}.
\end{rem}

\subsection{Additional properties of generalized gradient Young measures}
The previous proposition (Proposition~\ref{lem:meas}) only uses the $L^2(0,T;\Le)$ boundedness of the sequence $\{\fd d \}$. The following proposition is an adaptation of the considerations in~\cite[section 3]{weakstrongeuler} to our case and indicates the additional properties of the generalized Young measure due to the $L^\infty(0,T;\Le)$ bound which holds for the considered sequence.

\begin{proposition}\label{lem:timemeas}
Let $\fd d :\Omega \times [0,T] \ra \R^d$ be a family of functions fulfilling the assumptions of Theorem~\ref{thm:meas} and let this sequence  generate a generalized Young measure $(\nu^o, m , \nu^\infty)$. Then 
\begin{align}
\esssup_t \left ( \int_{\Omega} \langle| \f d(\f x, t)|^2 | \cdot |^2 , \nu^o_{\f x,t} \rangle \de \f x \right ) < \infty \,, \quad \esssup_t \left ( \int_{\Omega} \langle| \cdot |^2 , \nu^o_{\f x,t} \rangle \de \f x \right ) < \infty \,, \label{esssup}
\end{align} 
and the concentration measure $m$ admits a desintegration of the form 
\begin{align}
m ( \de \f x, \de t ) = m_t ( \de \f x ) \otimes \de t \,, \label{desin}
\end{align}
 where $t \mapsto  m_t$ is a bounded measurable map from $[0,T]$ into $\M^+(\ov{\Omega})$.
\end{proposition}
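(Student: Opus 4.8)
The plan is to upgrade the $L^2(0,T;\Le)$-based construction of Proposition~\ref{lem:meas} to exploit the uniform $L^\infty(0,T;\Le)$ bounds, in the spirit of~\cite[Section~3]{weakstrongeuler}. First I would record the two uniform bounds that are available by hypothesis: $\sup_{\delta}\|\nabla\fd d\|_{L^\infty(0,T;\Le)}<\infty$ and $\sup_{\delta}\|\nabla\fd d\,|\fd d|\|_{L^\infty(0,T;\Le)}<\infty$. For a fixed $\delta$ these say that $t\mapsto \int_\Omega(1+|\fd d|^2)(1+|\nabla\fd d|^2)\de\f x$ (the total mass of the slice-measure used to build $L_\delta$) is bounded on $[0,T]$ uniformly in $\delta$. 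Hence the family $\{L_{\delta}\}$, viewed not as measures on $\ov Q\times\ov B_d\times\ov B_{d\times d}$ but as $L^\infty$-in-time families of measures on $\ov\Omega\times\ov B_d\times\ov B_{d\times d}$, is bounded in $L^\infty_{w^*}(0,T;\M(\ov\Omega\times\ov B_d\times\ov B_{d\times d}))=\big(L^1(0,T;\C(\ov\Omega\times\ov B_d\times\ov B_{d\times d}))\big)^*$. By the Banach--Alaoglu theorem in this dual space, a (not relabelled) subsequence converges weakly$^*$; the limit is a bounded measurable map $t\mapsto \tilde L_t\in\M^+(\ov\Omega\times\ov B_d\times\ov B_{d\times d})$, and one checks this limit is consistent with the measure $L$ already extracted in Proposition~\ref{lem:meas} by testing against functions of the form $g(\f y,\f h,\f S)\eta(t)$ with $\eta\in L^1(0,T)$ and $g$ continuous, so that $L(\de\f y,\de\f h,\de\f S)=\tilde L_t(\de\f x,\de\f h,\de\f S)\otimes\de t$. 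Projecting onto $\ov\Omega$ gives $\tilde m(\de\f x,\de t)=\tilde m_t(\de\f x)\otimes\de t$ with $t\mapsto\tilde m_t$ bounded and measurable into $\M^+(\ov\Omega)$, and splitting $\tilde m_t$ along its Radon--Nikodym decomposition against the Lebesgue measure in $\f x$ produces the desired disintegration $m(\de\f x,\de t)=m_t(\de\f x)\otimes\de t$ of the concentration measure, the density part being absorbed into $\nu^o$ exactly as in Proposition~\ref{lem:meas}.

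For the two essential-supremum bounds in~\eqref{esssup}, I would test the disintegrated limit against the functions $f(\f h,\f S)=|\f S|^2$ and $f(\f h,\f S)=|\f h|^2|\f S|^2$, both of which lie in $\mathcal{R}$ (the latter is unchanged by the transformation~\eqref{transi}, see Remark~\ref{rem:trs}). By lower semicontinuity of the $L^\infty(0,T;\,\cdot\,)$-norm under weak$^*$ convergence — equivalently, by testing the convergence against $\eta(t)\,\mathbf 1_\Omega\cdot f$ for arbitrary nonnegative $\eta\in L^1(0,T)$ and using the uniform bound $\int_0^T\eta(t)\int_\Omega|\nabla\fd d|^2\de\f x\,\de t\le \|\fd d\|_{L^\infty(\He)}^2\|\eta\|_{L^1}$ — one obtains, for a.e.\ $t$,
\begin{align*}
\int_\Omega \langle |\cdot|^2,\nu^o_{(\f x,t)}\rangle\,\de\f x + \ll{m_t,\text{(angle part)}} \le \liminf_\delta \|\nabla\fd d(t)\|_{\Le}^2
\end{align*}
and similarly with the extra $|\f d(\f x,t)|^2$ weight; since $\nu^o$ is a probability measure and $m_t\ge0$, dropping the nonnegative concentration term yields~\eqref{esssup}. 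Here one uses crucially that by Proposition~\ref{lem:meas} the oscillation measure $\nu^o$ is a genuine (classical) gradient Young measure, so $\langle|\cdot|^2,\nu^o\rangle$ is the $L^1$-weak limit plus a nonnegative defect, never larger than the $\liminf$ of the $L^2$-norms.

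The main obstacle is the measurability and the rigorous identification of the time-disintegration: one must argue that the slice-wise weak$^*$ limits $\tilde L_t$ depend measurably on $t$ and genuinely disintegrate the space-time limit $L$, rather than merely existing for a.e.\ $t$ after passing to $t$-dependent subsequences. The clean way is to work in the dual pairing with $L^1(0,T;\C(\ov\Omega\times\ov B_d\times\ov B_{d\times d}))$ and invoke the fact that this space is separable (since $\ov\Omega\times\ov B_d\times\ov B_{d\times d}$ is a compact metric space), so that bounded sequences in its dual have weak$^*$ convergent subsequences whose limits are automatically represented by weak$^*$-measurable (hence, by separability, essentially $\M$-valued measurable) maps $t\mapsto\tilde L_t$; this is precisely the measurable-selection/disintegration argument used in~\cite[Section~3]{weakstrongeuler}, and I would follow it verbatim, replacing their Euler energy bound by our $L^\infty(0,T;\He)$ bound on $\fd d$. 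The only other point requiring care is checking that the function $|\f d(\f x,t)|^2|\f S|^2$ is an admissible test function in $\mathcal{R}$ even though $\f d$ itself is only in $L^\infty(0,T;\He)$: this is fine because the $\f h$-slot of the generalized Young measure is evaluated at the fixed limit $\f d(\f x,t)$ pointwise a.e., exactly as in the definition of $\ll{f,\nu_t}$ in Theorem~\ref{thm:young}.
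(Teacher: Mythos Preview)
Your proposal is correct and takes a genuinely different route from the paper. The paper argues more classically: it first applies the standard disintegration theorem to the space--time concentration measure $m$ with respect to its time-marginal $\bar m(E):=m(\ov\Omega\times E)$, obtaining $m=\tilde m_t\otimes\bar m$ with $\tilde m_t\in\mathcal P(\ov\Omega)$; it then tests the convergence of Proposition~\ref{lem:meas} against $f(\f h,\f S)=(1+|\f h|^2)(1+|\f S|^2)$ (whose recession function is identically $1$) multiplied by a nonnegative time-cutoff $\varphi\in\C_c([0,T])$, and the uniform $L^\infty(0,T;\Le)$ bounds give $\big|\int_0^T\varphi\,\bar m(\de t)\big|\le C\|\varphi\|_{L^1}$, so $\bar m\ll\de t$ and Radon--Nikod\'ym produces $g\in L^1(0,T)$ with $m_t=g(t)\tilde m_t$. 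The same single test also yields both bounds in~\eqref{esssup} at once. Your approach instead passes to the limit directly in $L^\infty_{w^*}\big(0,T;\M(\ov\Omega\times\ov B_d\times\ov B_{d\times d})\big)=\big(L^1(0,T;\C(\cdots))\big)^*$ via Banach--Alaoglu, so the time-disintegration with Lebesgue base comes built in and the measurability issue you flag is absorbed into the dual-space identification. This is more structural and arguably closer to what~\cite{weakstrongeuler} actually do; the paper's version is more elementary and avoids invoking the Bochner-dual characterisation. Your final paragraph is slightly muddled---the correct test function is $f(\f h,\f S)=|\f h|^2|\f S|^2\in\mathcal R$ (as you already say), and the appearance of $|\f d(\f x,t)|^2$ in~\eqref{esssup} is then automatic from the way the oscillation part of $\ll{\nu_t,\cdot}$ is evaluated at $\f h=\f d(\f x,t)$ in Proposition~\ref{lem:meas}---but the underlying logic is sound.
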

\begin{proof}
The application of Proposition~\ref{lem:meas} with $f ( \f h, \f S): = (1+| \f h |^2)(1+ | \f S|^2) $ and the recession function $\tilde{f}\equiv 1$ yields
\begin{align*}
0\leq (1+|\fd d |^2)(1+ | \nabla \fd d|^2) \stackrel{*}{\rightharpoonup}  \langle \nu^o, (1+| \f d|^2 )(1+| \f S  |^2)    \rangle  + \langle  \nu^{\infty }, 1  \rangle m \, 
\end{align*}
in $ \M ( \Omega \times [0,T])$. 

The canonical projection of the measure $m$ onto $[0,T]$ is defined by $\bar{m}( E) : = m( \ov \Omega \times E)$ for every Borel subset $E \subset[0,T]$. By the standard desintegration theorem of measures (see Evans and Gariepy~\cite[section 1.6.2]{evans2}), there exists a probability measure $\tilde{m}_t$ such that $m ( \de \f x , \de t ) = \tilde{m}_t( \de x) \otimes \bar{m}(\de t)$.

For  $ \varphi \in \C_c( [0,T] )$ with $ \varphi(t)\geq 0$ for all $t \in [0,T]$  we get
\begin{multline}
\intte{\intet{ \varphi ( t)(1+ | \fd d( \f x ,t)|^2)(1+|\nabla  \fd d( \f x ,t)|^2) }} \\ \longrightarrow \intte{\intet{  \varphi( t) \langle \nu^o_{( \f x, t)},(1+ | \f d(\f x, t)|^2)(1+ | \cdot|^2) \rangle   }}+  \int_0^T \varphi(t) \bar{m} ( \de t)\, .  \label{convgenYou}
\end{multline}
Remark that $ \nu_{(\f x, t)}^\infty $ and $\tilde{m}_t$ are probability measures and thus
\begin{align*}
\int_{\ov \Omega} \langle 1 , \nu_{(\f x, t)} ^ \infty \rangle \tilde{m}_t ( \de \f x) =1\, .
\end{align*}
Due to the a priori estimates holding for~$\fd d$ (see~\eqref{apri3}), we get
\begin{align}
\begin{split}
&\left | \intte{  \intet{  \varphi( t) \langle \nu_{( \f x, t)}^o , (1+| \f d(\f x, t)|^2)(1+ | \cdot|^2) \rangle   } } \right |\\&\qquad \leq \sup_{\delta \in (0,1)} \left | \intte{\varphi(t)\left (\left \| |\fd d(t)|\nabla \fd d(t) \right \|_{\Le}^2 + \| \nabla \fd d(t)\|_{\Le}^2 + \| \fd d(t) \|_{\Le}^2 +1 \right )} \right |\\&\qquad\leq \sup_{\delta\in(0,1)}  \left (\left \| |\fd d(t)|\nabla \fd d(t) \right \|_{L^\infty(\Le)}^2 + \| \nabla \fd d(t)\|_{L^\infty(\Le)}^2 + \| \fd d(t) \|_{L^\infty(\Le)}^2 +1 \right ) \| \varphi\|_{L^1(0,T)}\, 
\end{split}
\label{testedphi}
\end{align}
and hence the assertion of~\eqref{esssup}. 

The convergence~\eqref{convgenYou} together with the estimate~\eqref{testedphi} additionally implies
\begin{align*}
\left | \intte{\varphi (t) \bar{m}(\de t)}  \right |  \leq \sup_{\delta\in(0,1)} \esssup_{ t \in [0,T]} \left (\left \| |\fd d(t)|\nabla \fd d(t) \right \|_{\Le}^2 + \| \nabla \fd d(t)\|_{\Le}^2 + \| \fd d(t) \|_{\Le}^2 +1 \right )\| \varphi\|_{L^1(0,T)}\, .
\end{align*}
This shows that $\bar{m}$ is absolutely continuous with respect to the Lebesgue measure on $(0,T)$.
 By the Radon-Nikod\'{y}m theorem (see Evans and Gariepy~\cite[section 1.6.2]{evans2}), there exists a function $g\in L^1(0,T)$ with 
\begin{align*}
\int_0^T\varphi (t) \bar{m}(\de t) = \intte{\varphi(t) g(t) }\,  \quad \text{for all } \varphi \in \C([0,T])\, .
\end{align*}
Setting $m_t = g(t) \tilde{m}_t$, we find the desintegration property~\eqref{desin}.

\end{proof}


\subsection{Defect measure}
A similar statement as in Theorem~\ref{thm:young} is valid for families of functions which are bounded in the  sense of the following theorem.
\begin{theorem}\label{thm:defectmeas}
Let $\{ \fd d\}$ be a family of functions 
fulfilling
\begin{align}
\sup _{\delta\in (0,1)} \left (\delta \| \Lap \fd d  \|_{L^\infty(\Le)}^2 + \| \fd d\|_{L^\infty(\He)}^2\right ) < \infty\label{deltacoer}
 \,.
\end{align}
Then there exists a subsequence $\{ \fdk d \}$, a defect measure $\mu_t \in \M^+(\ov \Omega)$ for a.e.~$t\in(0,T)$ and a family of probability measure $\{\nu^\mu \}\subset \mathcal{P}(\Se ^{d^3-1}) $ for $\mu_t$ a.e.~$\f x \in \ov\Omega$ such that 
\begin{align*}
\int_0^T \int_{\Omega} f\left (\f x ,t,  \frac{\nabla^2\fdk d (\f x ,t) }{| \nabla^2 \fdk d(\f x ,t)|}\right ) \delta_k  | \nabla ^2 \fdk d(\f x ,t)|^2 \de \f x \de t 
\longrightarrow \int_0^T \int_{\Omega} \int_{\Se^{d^3-1}} f\left (  \f x,t, \f \Gamma \right )  \nu^\mu_{(\f x, t)} ( \de \f \Gamma ) \mu_t ( \de \f x ) \de t \,
\end{align*}
holds for all $f\in \C(\ov \Omega\times [0,T]   \times \Se^{d^3-1})$ and for~$\delta_k \ra 0$.

Additionally, $ \esssup_{t\in(0,T)} \ll{ \mu_t , 1} <\infty$ and 
\begin{align*}
\lim_{k\ra\infty } \int_0^T \int_{\ov \Omega }\phi(t) \varphi(\f x ) \delta _k| \nabla ^2 \fdk d(\f x ,t) |^2 \de\f x \de t = {}& \int_0^T \int_{\ov \Omega }\phi(t) \varphi(\f x ) \mu_t( \de\f x) \de t \\  ={}& \lim_{k\ra\infty} \int_0^T \int_{\ov \Omega }\phi(t) \varphi(\f x ) \delta_k |\Lap  \fdk d(\f x ,t) |^2 \de\f x \de t\,
\end{align*}
for all $\varphi \in \C^\infty_c(\Omega)$ and $\phi\in\C([0,T])$.
 
\end{theorem}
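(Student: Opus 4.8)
The plan is to transcribe the machinery behind Propositions~\ref{lem:meas} and~\ref{lem:timemeas} to the higher-order density $\delta|\nabla^2\fd d|^2$, which now lives on the \emph{compact} set $\ov Q\times\Se^{d^3-1}$ (recall $\ov Q=\ov\Omega\times[0,T]$), so that no transformation of type~\eqref{transi} is needed and the only real work is a disintegration argument preceded by one analytic reduction.

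\textbf{From the Laplacian to the full Hessian.} First I would upgrade the coercivity~\eqref{deltacoer}: it only controls $\delta\|\Lap\fd d\|_{L^\infty(\Le)}^2$, whereas the statement (and the $\esssup$-bound on $\mu_t$) genuinely involves the full second gradient. Invoking elliptic regularity on the $\C^{3,1}$-domain $\Omega$ together with the boundary conditions carried by the regularized solutions (the Dirichlet datum and the intrinsic condition $\f \gamma_0(\Lap\fd d)=0$ from Theorem~\ref{thm:weak}) gives, at almost every time, $\|\nabla^2\fd d(t)\|_{\Le}\leq c\,(\|\Lap\fd d(t)\|_{\Le}+\|\fd d(t)\|_{\He}+\|\f d_1\|_{\f H^{\nicefrac{3}{2}}(\partial\Omega)})$. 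Squaring, multiplying by $\delta$ and taking $\esssup_t$ yields $\sup_{\delta\in(0,1)}\esssup_{t}\,\delta\|\nabla^2\fd d(t)\|_{\Le}^2<\infty$, hence also $\sup_{\delta\in(0,1)}\delta\|\nabla^2\fd d\|_{L^2(Q)}^2<\infty$.

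\textbf{Compactness and disintegration.} I then define, for $g\in\C(\ov Q\times\Se^{d^3-1})$, the nonnegative Radon measures $\langle L_\delta,g\rangle:=\int_0^T\int_\Omega g\bigl(\f x,t,\frac{\nabla^2\fd d}{|\nabla^2\fd d|}\bigr)\,\delta|\nabla^2\fd d|^2\,\de\f x\,\de t$, with the sphere argument fixed arbitrarily on the null set where $\nabla^2\fd d=0$ (the weight vanishes there). Since $\langle L_\delta,1\rangle=\delta\|\nabla^2\fd d\|_{L^2(Q)}^2$ is uniformly bounded and $\M(\ov Q\times\Se^{d^3-1})=\C(\ov Q\times\Se^{d^3-1})^*$ with the set compact, I extract $\delta_k\to0$ with $L_{\delta_k}\stackrel{*}{\rightharpoonup}L\geq0$. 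Projecting $L$ onto $[0,T]$ gives a measure $\bar\mu$; testing the convergence against $g=\phi(t)\geq0$, $\phi\in\C_c((0,T))$, and using the $L^\infty$-in-time bound of the previous step gives $\int_0^T\phi\,\de\bar\mu\leq c\|\phi\|_{L^1(0,T)}$, so $\bar\mu$ has a bounded density $g_0\in L^\infty(0,T)$. Disintegrating $L$ over $[0,T]$ (Evans and Gariepy~\cite[section 1.6.2]{evans2}), rescaling the time-slices by $g_0$, and disintegrating the resulting slices over $\ov\Omega$ produces $\mu_t\in\M^+(\ov\Omega)$ with $\ll{\mu_t,1}=g_0(t)$ (hence $\esssup_t\ll{\mu_t,1}<\infty$) and probability measures $\nu^\mu_{(\f x,t)}\in\mathcal P(\Se^{d^3-1})$, so that $L=\nu^\mu_{(\f x,t)}\otimes\mu_t(\de\f x)\otimes\de t$; reading this off against a general $f\in\C(\ov\Omega\times[0,T]\times\Se^{d^3-1})$ is exactly the asserted convergence.

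\textbf{The $\nabla^2$-versus-$\Lap$ identity and the main obstacle.} Taking $g(\f x,t,\f\Gamma)=\phi(t)\varphi(\f x)$ constant in $\f\Gamma$, with $\varphi\in\C_c^\infty(\Omega)$ and $\phi\in\C([0,T])$, in the convergence above yields the first of the two final equalities; the second follows because integration by parts (with no boundary contribution, as $\varphi$ has compact support) rewrites $\int_\Omega\varphi(|\nabla^2\fd d|^2-|\Lap\fd d|^2)\,\de\f x$ as a finite sum of terms of the form $\int_\Omega\nabla\varphi\cdot\nabla\fd d\cdot\nabla^2\fd d\,\de\f x$, each of which, multiplied by $\delta_k$ and integrated in $t$ against $\phi$, is bounded by $\sqrt{\delta_k}\,\|\phi\|_\infty\|\nabla\varphi\|_\infty\|\nabla\fd d\|_{L^2(Q)}\|\sqrt{\delta_k}\,\nabla^2\fd d\|_{L^2(Q)}\to0$ by the $L^\infty(0,T;\He)$ bound and the first step. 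The one genuinely non-routine point is precisely that first step — converting the Laplacian coercivity~\eqref{deltacoer} into a uniform bound on the full Hessian, which is where the smoothness of $\partial\Omega$ and the boundary data really enter; all later steps are direct copies of the arguments already used for Propositions~\ref{lem:meas} and~\ref{lem:timemeas}.
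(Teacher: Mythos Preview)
Your proposal is correct and follows essentially the same route as the paper: define the measures $L_\delta$ on the compact set $\ov Q\times\Se^{d^3-1}$, extract a weak$^*$ limit, disintegrate successively to produce $\nu^\mu$ and $\mu_t$, use the $L^\infty$-in-time bound to show absolute continuity of the time marginal, and conclude the $\nabla^2$-versus-$\Lap$ identity by the same integration-by-parts computation (the paper's display~\eqref{massiden}) with the cross terms killed by the factor $\sqrt{\delta_k}$. Your treatment of the elliptic step is in fact slightly more explicit than the paper's, which simply records $\|\nabla^2\fd d(t)\|_{\Le}\leq c\|\Lap\fd d(t)\|_{\Le}+c\|\fd d(t)\|_{\He}$ without spelling out the role of the boundary data; and your order of disintegration (time first, then space) differs cosmetically from the paper's (sphere first, then time within $\ov Q$), but the content is identical.
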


\begin{proof}
The Radon measures $\M ( \ov \Omega \times [0,T]\times \Se^{d^3-1})$ are identified with the dual space of the continuous functions $\C( \ov\Omega \times [0,T]\times \Se^{d^3-1})$ (see {Edwards}~\cite[Theorem 4.10.1]{edwards}).
The family of measures 
$ \{ L_{\delta} \} \subset \M ( \ov\Omega \times [0,T]\times \Se^{d^3-1})$ is given by  
\begin{align*}
\langle L_{\delta} , g \rangle : = \int_0^T \int_{\ov \Omega} g\left (\f x, t ,  \frac{ \nabla ^2 \fd d (\f x, t) }{| \nabla ^2 \fd d(\f x, t) | } \right ) \delta | \nabla ^2 \fd d(\f x, t) |^2 \de \f x \de t  \,
\end{align*}
for all  $g \in \C(\ov  \Omega \times [0,T]\times \Se^{d^3-1})$.
The boundedness~\eqref{deltacoer} yields
 $$\| \nabla ^2 \fd d(t) \|_{\Le}\leq c \| \Lap \fd d(t) \|_{\Le}+ c \| \fd d (t)\|_{\He}\leq c \,,$$
such that the Banach--Alaoglu--Bourbaki theorem  provides the existence of a weakly$^*$ converging subsequence $\{ L_{\delta_k}\}\subset \{ L_{\nicefrac{1}{n}}\}$ with $n\in\N$, i.\,e.
\begin{align*}
L_{\delta_k} \stackrel{*}{\rightharpoonup} L  \quad \text{in } \M (\ov \Omega \times [0,T]\times \Se^{d^3-1})
\end{align*}
for $\delta_k \ra 0$.
The classical desintegration argument (see~{Evans}~\cite[Theorem 10.]{evans}
or {Ambrosio, Fusco and Pallara}~\cite[Theorem 2.28]{BoundVar})
shows the existence of a probability measure $\nu^\mu\in \mathcal{P}(\Se^{d^3-1})$ and a measure $\bar{\mu}\in \M(\ov\Omega \times [0,T])$ such that 
\begin{align*}
\langle L , g \rangle = \int_0^T \int_{\Omega} \int_{\Se^{d^3-1}} f\left ( t, \f x, \f \Gamma \right )  \nu^\mu_{(\f x, t)}( \de \f \Gamma ) \bar\mu ( \de \f x ,\de  t) \,.
\end{align*} 
Hence, for the test function $f \equiv 1$ we get $$ \delta | \nabla ^2 \fdk d(\f x,t)  |^2   \stackrel{*}{\rightharpoonup} \bar \mu   \quad \text{in } \M (\ov \Omega \times [0,T])$$ and thus $\bar \mu \in \M^+(\ov \Omega \times [0,T])$. 
 Like in Proposition~\ref{lem:timemeas}, the desintegration argument is again applied to $\bar{\mu}$ such that
 $ \bar \mu = \mu^1 \otimes \mu ^2$, where $ \mu^1 \in \mathcal{P}(\ov \Omega)$ and $ \mu^2 \in \M^+([0,T])$.
 Additionally, for the function $f \equiv 1$ it holds
 \begin{align}
\int_0^T  \phi(t) \mu^2 (\de t )  \leq \displaystyle \sup_{\delta\in(0,1) } \esssup_{t\in(0,T)} \delta  \| \nabla^2 \fd d \|_{\Le}^2  \| \phi \|_{L^1(0,T)}\leq c \| \phi \|_{L^1(0,T)}\,\label{muzwei}
\end{align}
for all  $\phi\in\C_c^\infty(0,T)$ with $\phi(t) \geq 0$. As a consequence $\mu^2$ is absolutely continuous with respect to the Lebesgue measure
(see~{Brenier, De Lellis \& Sz{\'e}kelyhidi}~\cite{weakstrongeuler} or {Elstrodt}~\cite[Kapitel VIII, Satz 2.5]{elstrodt}). Thus, the Radon--Nikod\'{y}m derivative of $\mu^2$ with respect to the Lebesgue measure exists (see~{Halmos} \cite[Section 32, Theorem A]{halmos}). 
There is a function $g\in L^1(0,T)$
 such that $\mu^2(\de t) = g(t) \de t $.
The first assertion of Theorem~\ref{thm:defectmeas} is reached by setting  $\mu_t = g(t) \mu^1_t$.   
The estimate for $\mu_t$  is a direct consequence of inequality~\eqref{muzwei}.

Using a partial integration, we see 
\begin{align}
\begin{split}
\delta \left (   | \Lap \fd d |^2 ,\varphi \right ) = {}& - \delta \left(      \nabla  \fd d :\nabla \Lap \fd d , \varphi\right ) 
 -\delta \left(  \nabla \fd d ;  \Lap \fd d \otimes \nabla  \varphi \right)  \\
={}& \delta\left (  | \nabla ^2  \fd d |^2 ,\varphi \right )
+ \delta \left ( \nabla \fd d : \nabla ^2 \fd d  ,\nabla  \varphi\right )
-\delta \left(   \nabla \fd d ;  \Lap \fd d \otimes \nabla   \varphi \right)\,
\end{split}\label{massiden}
   \end{align}
for all $\varphi \in \C^\infty_c(\Omega)$. 

The terms on the right-hand side of~\eqref{massiden} can be estimated by 
\begin{align*}
\delta \left ( \nabla \fd d : \nabla^2 \fd d , \nabla  \varphi \right )
 - \delta \left(   \nabla \fd d ;  \Lap \fd d \otimes\nabla  \varphi \right) &\leq 
 c\delta  \| \nabla \fd d \|_{\Le} \|  \nabla^2 \fd d \|_{\Le} \| \nabla \varphi\|_{\f L^\infty}\\& \leq c\sqrt{\delta} \left ( \| \nabla \fd d \|_{\Le} ( \delta \| \Lap \fd d  \|_{\Le}^2+ \| \fd d \|_{\He}^2)^{\nicefrac{1}{2}}  \| \nabla \varphi\|_{\f L^\infty}\right )\,.
\end{align*}
Hence, this terms vanishes for $\delta \ra 0$. 
\end{proof}


\section{Existence of weak solutions to the regularised system\label{sec:weak}}
\subsection{Galerkin basis and solvability of the approximate problem}
In this section, we argue in the same way as in~\cite{unsere} and therefore, we refer to this previous work.
The approximation scheme is similar to the one in~\cite{unsere}.
To approximate  the Navier--Stokes-like equation we use again the eigenfunctions of the Stokes operator $( \f w_i)_{i\in \N}$
with the associated sequence of Galerkin spaces  $W_n : = \spa\{\f  w_1, \f w_2 , \f w_3, \ldots \}$ and sequence of $\Le$-orthogonal projections $P_n : \Ha \ra W_n$. Remark that $\Omega$ is of class $\C^{3,1}$ such that the family of projections $P_n$ is continuous as a mapping of $\f H^2 \cap \V$ to itself (see~\cite{malek}).

For the regularized director equation, we choose eigenfunctions of the differential operator corresponding to the boundary value problem
\begin{align}
\begin{split}
- \Lap \f z  &= \f h \quad\text{in } \Omega \, , \\
\f z &= 0\quad\text{on } \partial\Omega \,.
\end{split}\label{bvp}
\end{align}
Since $\f \Lambda$ is strongly elliptic~\eqref{ellip} and symmetric, i.e.~$\f \Lambda _{ijkl}=\f \Lambda_{klij}$, 
 the above problem  is a symmetric strongly elliptic system that possesses a unique weak solution $\f z \in \f H^1_0$ for any $\f h \in \f H^{-1}$ (see e.g.~Chipot~\cite[Theorem~13.3]{chipot}). Its solution operator is thus a compact operator in $\f L^2$. Hence there exists an orthogonal basis of eigenfunctions $(\f z_n )$.
Moreover, the problem is $\f H^2$-regular (see e.g.~Morrey~\cite[Theorem~6.5.6]{morrey} and recall that $\Omega$ is of class $\mathcal{C}^{3,1}$), i.e.~for any $\f h \in \f L^2$ the solution $\f z$ is in $\f H^2 \cap \f H^1_0$ and there exists a constant $c>0$ such that
\begin{equation}\label{H2-Lambda}
\|\f z\|_{\f H^2} \le  \eta \, \|\Lap \f z\|_{\f L^2}
\end{equation}
for any $\f z \in \f H^2 \cap \f H^1_0$. With a standard bootstrap argument we get, that for every $\f h \in \Hc$, the solution $\f z$ of~\eqref{bvp} is in $\Hg$ and for another constant $c>0$, we have
\begin{equation}\label{H4-Lambda}
\|\f z\|_{\f H^4} \le c \, (\|\Lapp \f z \|_{\f L^2} + \| \f z\|_{\Hc})\,.
\end{equation}

Again, the eigenfunctions form an orthogonal basis in $\Le$. Let $ Z_n : = \spa \left \{  \f z_1, \dots , \f z_n \right \} $ ($n\in \N$) and assume $\|\f z_i\|_{\f L^2} = 1$ for $i=1, 2, \dots$. Then
\begin{equation*}
R_n : \Le \longrightarrow Z_n \, , \quad
R_n  \f f:= \sum_{i=1}^n ( \f f, \f z_i ) \f z_i
\end{equation*}
 is the
$\Le$-orthogonal projection onto $Z_n$.

We define the inverse of the trace operator in an appropriate way for our system. This is done by using the solution operator to the associated stationary problem.

\begin{theorem}[Extension operator]
\label{spur}

There exists a linear continuous operator $\Sr: \f H^{\nicefrac{7}{2}}(\partial \Omega)  \ra \f H^{4}(\Omega)$, where $\Omega$ is of class $\C^{3,1}$.
This operator is the right-inverse of the trace operator, i.e.~for all $\f g\in \f H^{\nicefrac{7}{2}}(\partial \Omega)  $, it holds  $ \Sr\f g = \f g $  on $\partial\Omega$ in the sense of the trace operator.
Additionally, it holds $ \Lap \Sr\f g = 0  \text{ in } \Omega $ and there exists a constant $c>0$ such that
\begin{align}
\| \Sr\f g \|_{\Hg(\Omega)} \leq{}& c \| \f g \|_{\Hrand{7}}\,  \quad \text{for }\f g \in \Hrand{7} \,.\label{H4}
\end{align}
\end{theorem}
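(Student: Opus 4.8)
The plan is to construct $\Sr\f g$ as a \emph{harmonic} lifting of $\f g$, obtained from an arbitrary (not necessarily harmonic) Sobolev extension of $\f g$ corrected by a homogeneous-Dirichlet Poisson solve. Every step is linear and can be performed componentwise, so linearity of $\Sr$ and the passage to vector-valued data reduce at once to a scalar construction; I would therefore fix a scalar $g\in\Hrand{7}$ and write $\Lap$ for the scalar Laplacian throughout.

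First I would invoke the standard trace theory: since $\partial\Omega$ is of class $\C^{3,1}$, the Dirichlet trace maps $\Hg(\Omega)$ onto $\Hrand{7}$ and admits a bounded linear right inverse $E_0\colon\Hrand{7}\to\Hg(\Omega)$. Setting $w:=E_0 g$, one has $\f\gamma_0 w=g$, $\|w\|_{\Hg(\Omega)}\le c\|g\|_{\Hrand{7}}$, and in particular $\Lap w\in\Hc(\Omega)$. Next I would correct $w$ by solving $-\Lap z=\Lap w$ in $\Omega$ with $z=0$ on $\partial\Omega$: the Lax--Milgram lemma (as in Chipot~\cite[Theorem~13.3]{chipot}) gives a unique weak solution $z\in\Hb(\Omega)$, and since the right-hand side lies in $\Hc(\Omega)$ and $\partial\Omega\in\C^{3,1}$, the up-to-the-boundary regularity for the Dirichlet--Laplacian (Morrey~\cite[Theorem~6.5.6]{morrey}, together with the same bootstrap already used to derive~\eqref{H4-Lambda}) yields $z\in\Hg(\Omega)\cap\Hb(\Omega)$ with
\begin{align*}
\|z\|_{\Hg(\Omega)}\le c\,\|\Lap w\|_{\Hc(\Omega)}\le c\,\|w\|_{\Hg(\Omega)}\le c\,\|g\|_{\Hrand{7}}\,.
\end{align*}

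Then I would set $\Sr g:=w+z$ and verify the three assertions. By construction $\Sr g\in\Hg(\Omega)$; moreover $\Lap(\Sr g)=\Lap w+\Lap z=\Lap w-\Lap w=0$ in $\Omega$; and $\f\gamma_0(\Sr g)=\f\gamma_0 w+\f\gamma_0 z=g$, since $z\in\Hb(\Omega)$ has zero trace. The estimate~\eqref{H4} is immediate from the two bounds above via the triangle inequality, and $\Sr$ is linear because $E_0$ and the solution operator $\Lap w\mapsto z$ are linear; finally one recovers the vector-valued statement by applying the construction to each Cartesian component of $\f g$.

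The only non-elementary inputs are (i) the existence of a bounded right inverse of the trace into $\Hg(\Omega)$ and (ii) the global $\Hg$-estimate for the Poisson problem with data in $\Hc(\Omega)$; both rest precisely on the hypothesis $\partial\Omega\in\C^{3,1}$, and (ii) is exactly the boundary regularity already exploited in~\eqref{H4-Lambda}, so no genuinely new obstacle appears --- the argument is a routine assembly of standard elliptic facts. An equivalent route, if one prefers to bypass the explicit lifting, is to define $\Sr\f g$ directly as the solution (obtained by transposition for rough data) of $\Lap\f u=0$ in $\Omega$, $\f u=\f g$ on $\partial\Omega$, and to read off $\f u\in\Hg(\Omega)$ from the same regularity theorem; this merely repackages the same two facts.
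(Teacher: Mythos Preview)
Your proposal is correct and amounts to the same harmonic-extension construction the paper uses: the paper defines $\Sr\f g$ directly as the solution of $-\Lap\f d=0$ in $\Omega$, $\f d=\f g$ on $\partial\Omega$, and quotes McLean~\cite[Theorems~4.10 and~4.21]{mclean} for solvability and the $\f H^{s-1/2}(\partial\Omega)\to\f H^s(\Omega)$ regularity up to $s=4$, which is precisely the ``equivalent route'' you describe in your final paragraph. Your lift-then-correct presentation is just a more explicit unpacking of the same argument, relying on the same elliptic regularity input.
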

\begin{proof}
Let $\Omega$ be of class $\C^{3,1}$. The extension operator is defined via the solution operator of the problem
\begin{align}
-\Lap \f d= 0 \quad \text{in } \Omega\,,\qquad
\f d = \f g \quad \text{on } \partial \Omega\,.
\label{randop}
\end{align}
This problem is uniquely solvable for a tensor enjoying the strong ellipticity (see~{McLean}~\cite[Theorem 4.10]{mclean}). 
The associated solution operator is linear and continuous and the regularity of this problem asserts 
 (vgl.~{McLean}~\cite[Theorem 4.21]{mclean})
\begin{align*}
\Sr : \f H^{s-\nicefrac{1}{2}}(\partial \Omega)  \ra \f H^{s}(\Omega)   \, \quad \text{for all $s$ with } 0 \leq s \leq 4 \,.
\end{align*}
 
We remark that $\f \Lambda $ as defined in~\eqref{Lambda} is strongly elliptic (see~\eqref{ellip}).

\end{proof}

The approximate system is similar to the one in~\cite{unsere}.
Let $n\in \N$ be fixed. As usual, we consider the ansatz
\begin{align}
\fn v ( t)  = \sum_{i=1}^n v_n^i(t)\f w_i, \quad \fn d(t) = \Sr \f d_1 + \sum_{i=1}^nd_n^i(t)\f z_i \, \label{dar}
\end{align}
with $( v_n^i, d_n^i) \in \AC([0,T]) $ for all $i=1,\ldots ,n$.
 
Our approximation reads as
\begin{subequations}\label{eq:dis}
\begin{align}
\begin{split}
( \partial_t {\fn v}, \f w  ) +( ( \fn v \cdot \nabla ) \fn v, \f w  ) -(\nabla \fn d ^ T  \fn q , \f w  )+ \left (\f T^L_{n,\delta}: \nabla \f w \right )&= \left \langle  \f g   , \f w\right  \rangle,\\
\fn v(0) &= P_n \f v_0 \,,
\end{split}
\label{vdis}\\
\begin{split}
( \partial_t \fn d + ( \fn v \cdot \nabla ) \fn d 
- \skn{v} \fn d , \f z ) + \lambda ( \syn v \fn d , \f z)
+  (\fn q , \f z ) &=0 \,,
\\ \fn d(0)&= R_n\f d_{0}\,
\end{split}\label{ddis}
\end{align}
for all $ \f w \in W_n$ and $ \f z \in Z_n$,
where  $ \f q_{n,\delta} $ is given by the projection of the variational derivative of the free energy
\begin{align}
{\f q_{n,\delta}} : = 
R_n \left (F_{\f h}( \fn d ,\nabla\fn d ) - \di F_{\f S}( \fn d , \nabla \fn d) + \frac{1}{\varepsilon}(|\fn d |^2 -1) \fn d  \right )+\delta \Lapp \fn d \, ,\label{qn}
\end{align}
and 
\begin{align}
\begin{split}
{\f T}_{n,\delta}^L :={}&  \mu_1 (\fn d \cdot  \syn v \fn d )(\fn d \otimes \fn d)+\mu_4 \syn v  - (\mu_2+\mu_3) \left (\fn d \otimes \fn q     \right )_{\sym} \\&
- \left (\fn d \otimes \fn q   \right)_{\skw} + ((\mu_5+\mu_6) -\lambda(\mu_2+\mu_3))  \left ( \fn d \otimes \syn v \fn d \right)_{\sym}\, 
\end{split}\label{lesliedis}
\end{align}%
is the discrete Leslie stress, where we replaced $\f e_{n,\delta}$ by $-\lambda\syn v \fn d -  {\f q}_{n,\delta}$ in comparison to formulation~\eqref{Leslie}. 
This allows to write this system as an ordinary differential equation in finite dimensions.
The solvability of this discrete system is rather standard and we refer to~\cite{unsere} for more details.

\end{subequations}%

\subsection{A priori estimates}
To get a priori estimates, we use the important dissipative character of the system. The proof of the energy inequality is given in~\cite[Proposition 4.1]{unsere}. The subsequent corollary works in the same way for our present case. We thus get the following energy equality for the discrete system:
\begin{align} 
\begin{split}
&  \frac{1}{2}\|\fn v(t)\|_{\Le}^2 
+  \F_{\delta}(\fn d(t))  +  \int_0^t\left (\mu_1 \left\|\fn d\cdot \syn v \fn d\right\|_{L^2}^2 + \mu_4 \|\syn v \|_{\Le}^2\right ) \de s  \\
&\quad +   \int_0^t\left ((\mu_5+\mu_6- \lambda (\mu_2+\mu_3))\|\syn{v}\fn d\|_{\Le}^2
+  \|\fn q \|_{\Le}^2\right )\de s  \\
&= \frac{1}{2}\|P_n \f v _0\|_{\Le}^2 + \F_{\delta}(R_n\f d_{0})+   \int_0^t \left (\langle \f g, \fn v\rangle + \left (  (\mu_2+\mu_3) -\lambda \right ) ({\f q}_{n,\delta} , \syn v \fn d )\right )\de s \, .
\end{split}\label{entro1}
\end{align}
\begin{proposition}[A priori estimates I]\label{prop:apri}
The solutions $( \fn v , \fn d)$ to the approximate system~\eqref{eq:dis} admit the following a priori estimate. There exists $\alpha, \beta >0$ and a constant $c>0$ independent of $n$ such that
\begin{align}
\begin{split}
&\frac{1}{2}\| \fn v \|_{L^\infty(\f L^2)}^2 
+ \sup_{t\in[0,T]}
\mathcal{F}_{\delta}(\fn d(t)) + \mu_1\left \Vert \fn d\cdot \syn v \fn d \right \Vert_{L^2(L^2)}^2
 \\
&\quad 
+ \frac{\mu_4}{2} \|\syn v \|_{L^2(\Le)}^2+ \alpha \|\syn v\fn d\|_{L^2(\Le)}^2+ \beta  \|\fn q \|_{L^2(\Le)}^2
\\
& \leq \frac{1}{2}\|\f v_0\|_{\Le}^2+  \F_\delta(\f d_0) +  c \left ( \| R_n \f d_0 \|_{\Hc}^3 + \|  \f d_0 \|_{\Hc}^3 + 1\right ) \left \|R_n \f d_0 - \f d_0\right  \|_{\Hc}
+ c  \| \f g\|_{L^2(\Vd)}^2\leq c\,.
\end{split}
\label{entrodiss}
\end{align}
\end{proposition}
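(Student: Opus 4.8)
The plan is to extract everything directly from the energy equality \eqref{entro1}: the two terms on its right-hand side that are not of initial-data type are absorbed into the dissipative terms on the left, so that \emph{no} Gronwall argument is needed. For the forcing term note that $\fn v(t)\in\V$ vanishes on $\partial\Omega$ and is divergence free, so $\|\nabla\fn v\|_{\Le}^2 = 2\|\syn v\|_{\Le}^2$ (since $\int_\Omega\partial_i v_j\,\partial_j v_i\,\de\f x = \int_\Omega(\di\fn v)^2\,\de\f x = 0$), and Poincar\'e's inequality gives $\|\fn v\|_{\V}\le c\,\|\syn v\|_{\Le}$. Hence
\begin{align*}
\left|\langle\f g(s),\fn v(s)\rangle\right| \le \|\f g(s)\|_{\Vd}\,\|\fn v(s)\|_{\V} \le \frac{\mu_4}{4}\,\|\syn v(s)\|_{\Le}^2 + c\,\|\f g(s)\|_{\Vd}^2 ,
\end{align*}
and after integration over $(0,t)$ the first summand is swallowed by $\int_0^t\mu_4\|\syn v\|_{\Le}^2\,\de s$, leaving the coefficient $\mu_4/2$ and producing the term $c\|\f g\|_{L^2(\Vd)}^2$.

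The essential point is the indefinite Leslie cross term $\bigl((\mu_2+\mu_3)-\lambda\bigr)(\fn q,\syn v\fn d)$. Young's inequality with a parameter $\theta>0$ gives
\begin{align*}
\left|\bigl((\mu_2+\mu_3)-\lambda\bigr)(\fn q,\syn v\fn d)\right| \le \frac{\theta}{2}\,\|\fn q\|_{\Le}^2 + \frac{\bigl((\mu_2+\mu_3)-\lambda\bigr)^2}{2\theta}\,\|\syn v\fn d\|_{\Le}^2 .
\end{align*}
The second line of \eqref{con} says precisely that $\bigl((\mu_2+\mu_3)-\lambda\bigr)^2 \big/ \bigl(2((\mu_5+\mu_6)-\lambda(\mu_2+\mu_3))\bigr) < 2$, so $\theta$ may be picked in the nonempty interval $\bigl(\,\bigl((\mu_2+\mu_3)-\lambda\bigr)^2\big/\bigl(2((\mu_5+\mu_6)-\lambda(\mu_2+\mu_3))\bigr)\,,\,2\,\bigr)$; with such a $\theta$ one has $\beta:=1-\theta/2>0$ and $\alpha := (\mu_5+\mu_6)-\lambda(\mu_2+\mu_3)-\bigl((\mu_2+\mu_3)-\lambda\bigr)^2/(2\theta)>0$. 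Integrating \eqref{entro1}, inserting the two bounds above, and moving the absorbed contributions to the left-hand side leaves the dissipative terms with the coefficients $\mu_1$, $\mu_4/2$, $\alpha$, $\beta$ as in the assertion, while the right-hand side now reads $\tfrac12\|P_n\f v_0\|_{\Le}^2 + \F_\delta(R_n\f d_0) + c\|\f g\|_{L^2(\Vd)}^2$, which is independent of $t$.

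It remains to bound the initial-data terms. One has $\|P_n\f v_0\|_{\Le}\le\|\f v_0\|_{\Le}$ because $P_n$ is an orthogonal projection, and $\F_\delta(R_n\f d_0)\le\F_\delta(\f d_0)+|\F_\delta(R_n\f d_0)-\F_\delta(\f d_0)|$. Estimating the last difference term by term in \eqref{regfrei} and \eqref{frei}, using the multilinear structure of each summand together with the embedding $\Hc\hookrightarrow\f L^\infty$ and H\"older's inequality, one sees that $\F_\delta$ is locally Lipschitz on $\Hc$ with a cubically growing constant, i.e.
\begin{align*}
|\F_\delta(R_n\f d_0)-\F_\delta(\f d_0)| \le c\,\bigl(\|R_n\f d_0\|_{\Hc}^3+\|\f d_0\|_{\Hc}^3+1\bigr)\,\|R_n\f d_0-\f d_0\|_{\Hc} ,
\end{align*}
which is the first inequality in \eqref{entrodiss}; here $c$ may depend on $\delta$ but not on $n$. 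Since $|\f d_0|=1$ a.e.\ the penalty part of $\F_\delta(\f d_0)$ vanishes, so $\F_\delta(\f d_0)<\infty$; moreover, by the compatibility condition $\f\gamma_0(\f d_0)=\f d_1$ and the choice of the Galerkin spaces, $R_n\f d_0\to\f d_0$ in $\Hc$, so $\|R_n\f d_0\|_{\Hc}$ and $\|R_n\f d_0-\f d_0\|_{\Hc}$ are bounded in $n$; and $\f g\in L^2(0,T;\Vd)$. Thus the right-hand side is bounded by a constant independent of $n$. Taking the supremum over $t\in[0,T]$ in the two instantaneous terms (legitimate because the remaining left-hand side terms are nonnegative) gives the $L^\infty$-in-time bounds, and setting $t=T$ gives the $L^2$-in-time bounds; summing these estimates yields \eqref{entrodiss}.

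I expect the delicate step to be the absorption of the Leslie cross term: one must verify that the structural assumption \eqref{con} is exactly the condition under which $\theta$ can be chosen so that both leftover coefficients $\alpha$ and $\beta$ stay strictly positive. The local Lipschitz estimate for $\F_\delta$ and the time-integration and supremum bookkeeping are routine once the three-dimensional embedding $\Hc\hookrightarrow\f L^\infty$ is invoked.
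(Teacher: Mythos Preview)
Your proposal is correct and follows essentially the same approach as the paper. The paper defers the absorption of the forcing term and of the indefinite Leslie cross term to \cite[Corollary~4.2]{unsere} and only spells out the Lipschitz estimate $|\F_\delta(R_n\f d_0)-\F_\delta(\f d_0)|\le c(\|R_n\f d_0\|_{\Hc}^3+\|\f d_0\|_{\Hc}^3+1)\|R_n\f d_0-\f d_0\|_{\Hc}$ term by term (regularizer, quadratic Oseen--Frank part, quartic Oseen--Frank part, penalty); your argument makes the absorption step explicit via the choice of $\theta$ under~\eqref{con} and then gives the same Lipschitz bound in compressed form.
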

\begin{proof}
This proposition can be shown as in Corollary~\cite[Corollary 4.2]{unsere}, we only need another estimate for the free energy evaluated at the projection of the initial values, i.e.~${\F}_{\delta}(R_n \f d_0)$.
 Due to the higher regularity of the initial value $\f d_0$, we can estimate
\begin{align*}
\delta \| \Lap R_n \f d_0\|_{\Le}^2 - \delta \| \Lap \f d_0 \|_{\Le}^2 ={}&\delta \left ( \Lap R_n \f d_0 ,\Lap R_n \f d_0 -\Lap \f d_0 \right ) + \delta\left  (\Lap \f d_0 ,\Lap R_n \f d_0 -\Lap \f d _0 \right ) \\
\leq{}& \delta \left ( \| \Lap R_n \f d_0 \|_{\Le}+ \|\Lap \f d_0\|_{\Le}\right ) \| \Lap R_n \f d_0 - \Lap \f d_0 \|_{\Le} \,. 
\end{align*} 
 Similarly, we get for the Oseen--Frank free energy
 \begin{align*}
 \left ( \nabla R_n \f d _0 ; \f \Lambda : \nabla R_n  \f d_0 \right ) -  \left ( \nabla   \f d _0 ; \f \Lambda :   \nabla \f d_0 \right ) \leq {}& c \left ( \|R_n \nabla \f d_0 \|_{\Le} + \|  \nabla \f d_0\|_{\Le}  \right ) \|  \nabla R_n \f d_0 - \nabla \f d_0 \|_{\Le}\, 
 \end{align*}
 as well as with Youngs inequality
 \begin{align*}
 \left ( \nabla R_n \f d _0 \otimes R_n \f d_0 \dreidotkom \f \Theta \dreidots \nabla R_n  \f d_0  \otimes R_n\f d_0 \right ) -  \left ( \nabla  \f d _0 \otimes  \f d_0 \dreidotkom \f \Theta \dreidots \nabla   \f d_0  \otimes \f d_0 \right ) \leq c \left ( \| R_n \f d_0 \|_{\f W^{1,4}}^3 + \| \f d_0 \|_{\f W^{1,4}}^3  \right ) \| R_n \f d_0 - \f d_0 \|_{\f W^{1,4}}\,.
 \end{align*}
 For the penalization term, we get
 \begin{align*}
 \frac{1}{4\varepsilon}\left \| | R_n \f d_0 |^2 -1 \right \|_{\Le}^2 - \frac{1}{4\varepsilon} \left \| |  \f d_0 |^2 -1 \right \|_{\Le}^2 \leq  c \left (  \| R_n \f d_0 \|_{\f L^4 } ^3 + \| \f d_0 \|_{\f L^4}^3 +1   \right ) \| R_n \f d_0 - \f d_0 \|_{\f L^4} \,.
 \end{align*}
 Together, we can estimate with the standard Sobolev embeddings and Young's inequality
 \begin{align*}
 \F_{\delta}(R_n \f d_0 ) \leq  \F_\delta(\f d_0) +  c \left ( \| R_n \f d_0 \|_{\Hc}^3 + \|  \f d_0 \|_{\Hc}^3 + 1\right ) \left \|R_n \f d_0 - \f d_0 \right \|_{\Hc}\,.
  \end{align*}
 Since $R_n$ is the orthogonal  projection on $\Hc$ the right-hand side of the above inequality is bounded independently of $n$. 
 \end{proof}

Initially, the a priori estimate~\eqref{entrodiss} only holds for the maximal time interval on which the solutions to the approximate problem~\eqref{eq:dis} exist. With a standard continuation argument as in~\cite{unsere}, this existence interval can be shown to be $[0,T]$.

\begin{proposition}\label{prop:apri2}
Let the assumptions of Theorem~\ref{thm:weak} be fulfilled. Then there exists a constant $c>0$ independent of $n$, but dependent on $\delta$ such that 
\begin{align}
\begin{split}
\| \f v _n \|_{L^\infty(\f L^2)}^2 +  \|\Lap  \f d_n\|_{L^{\infty}(\Le)}^2+  \|\nabla  \f d_n\|_{L^{\infty}(\Le)}^2   &+ \left \Vert \f d_n \cdot  \syn v \f d_n \right \Vert_{L^2(L^2)}^2 + \|\f v\|_{L^2(\Hb)}^2 \\
&+ \|\syn v\f d_n\|_{L^2(\Le)}^2
+  \| \Lapp \f d_n\|_{L^2(\Le)}^2
\leq c
\, 
\end{split}
\label{apri2reg}
\end{align}
holds for all solutions $(\f v_n , \f d_n) $  of \eqref{eq:dis}.
\end{proposition}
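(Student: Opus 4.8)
The plan is to deduce \eqref{apri2reg} entirely from the energy estimate of Proposition~\ref{prop:apri}, the defining relation \eqref{qn} for $\f q_{n,\delta}$, and standard functional analysis (the elliptic estimate \eqref{H2-Lambda}, Korn's inequality, and the three-dimensional embeddings $\Hc\hookrightarrow\f L^\infty$ and $\He\hookrightarrow\f L^4$). No further use of the director equation \eqref{ddis} will be needed; the bound on $\Lapp\f d_n$ comes from reading \eqref{qn} as an identity for $\delta\,\Lapp\f d_n$. First I would read off from \eqref{entrodiss} that $\|\f v_n\|_{L^\infty(\Le)}$, $\sup_{t\in[0,T]}\F_\delta(\f d_n(t))$, $\|\syn v\|_{L^2(\Le)}$, $\|\syn v\f d_n\|_{L^2(\Le)}$, $\|\f d_n\cdot\syn v\f d_n\|_{L^2(L^2)}$ and $\|\f q_{n,\delta}\|_{L^2(\Le)}$ are bounded by a constant $c$ independent of $n$ (in fact of $\delta$ as well, since $|\f d_0|=1$ makes the penalization of the initial datum vanish). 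This already accounts for four of the quantities in \eqref{apri2reg}, and the bound on $\|\f v_n\|_{L^2(\Hb)}$ then follows from $\|\syn v\|_{L^2(\Le)}\le c$ by Korn's inequality, since $\f v_n(t)\in\V\subset\Hb$.

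Next I would turn the bound on $\sup_t\F_\delta(\f d_n(t))$ into spatial regularity of $\f d_n$. As the three summands of \eqref{regfrei} are non-negative, this bound gives, for a.e.~$t$, both $\delta\,\|\Lap\f d_n(t)\|_{\Le}^2\le c$ and $\||\f d_n(t)|^2-1\|_{\Le}^2\le c\,\varepsilon(\delta)$. From the first, $\|\Lap\f d_n\|_{L^\infty(\Le)}\le c(\delta)$; since $\Lap\Sr\f d_1=0$, the difference $\f d_n-\Sr\f d_1$ lies in $\Hc\cap\Hb$ with $\Lap(\f d_n-\Sr\f d_1)=\Lap\f d_n$, so \eqref{H2-Lambda} yields $\|\f d_n\|_{L^\infty(\Hc)}\le c(\delta)$. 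In particular $\|\nabla\f d_n\|_{L^\infty(\Le)}\le c(\delta)$, $\|\f d_n\|_{L^\infty(\f L^\infty)}\le c(\delta)$ and $\|\nabla\f d_n\|_{L^\infty(\f L^4)}\le c(\delta)$, which disposes of two more quantities in \eqref{apri2reg}.

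It remains to bound $\Lapp\f d_n$ in $L^2(\Le)$, the only genuinely non-trivial point. By the choice of Galerkin basis $\Lapp\f d_n\in Z_n$, and since $\Lap\Sr\f d_1=0$ the relation \eqref{qn} may be rewritten as
\begin{equation*}
\delta\,\Lapp\f d_n = \f q_{n,\delta} - R_n\!\left(F_{\f h}(\f d_n,\nabla\f d_n) - \di F_{\f S}(\f d_n,\nabla\f d_n) + \tfrac{1}{\varepsilon(\delta)}(|\f d_n|^2-1)\f d_n\right);
\end{equation*}
using that $R_n$ is an orthogonal projection in $\Le$, it then suffices to bound each of the three nonlinear terms in $L^\infty(0,T;\Le)$ (the interval $(0,T)$ being finite). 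From \eqref{FSFh}, $F_{\f h}$ is cubic with every monomial carrying a factor $\f d_n$, whence $\|F_{\f h}(\f d_n,\nabla\f d_n)\|_{\Le}\le c\,\|\f d_n\|_{\f L^\infty}\|\nabla\f d_n\|_{\f L^4}^2\le c(\delta)$; the penalization term obeys $\varepsilon(\delta)^{-1}\|(|\f d_n|^2-1)\f d_n\|_{\Le}\le\varepsilon(\delta)^{-1}\||\f d_n|^2-1\|_{\Le}\|\f d_n\|_{\f L^\infty}\le c(\delta)\,\varepsilon(\delta)^{-1/2}=c(\delta)$, because $\varepsilon(\delta)=\delta$. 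The delicate term is $\di F_{\f S}$, and here the key structural observation is that, by \eqref{FSFh}, $F_{\f S}$ is \emph{affine-linear in the gradient variable} and at most quadratic in $\f d$; therefore $\di F_{\f S}(\f d_n,\nabla\f d_n)$ consists only of terms $\nabla^2\f d_n$ times a polynomial of degree at most two in $\f d_n$ and of terms $\f d_n\,\nabla\f d_n\,\nabla\f d_n$, giving $\|\di F_{\f S}(\f d_n,\nabla\f d_n)\|_{\Le}\le c\,(1+\|\f d_n\|_{\f L^\infty}^2)\|\nabla^2\f d_n\|_{\Le}+c\,\|\f d_n\|_{\f L^\infty}\|\nabla\f d_n\|_{\f L^4}^2\le c(\delta)$ — crucially with \emph{no} product of two second-order derivatives to control. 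Combining these estimates with $\|\f q_{n,\delta}\|_{L^2(\Le)}\le c$ yields $\delta\,\|\Lapp\f d_n\|_{L^2(\Le)}\le c(\delta)$, hence $\|\Lapp\f d_n\|_{L^2(\Le)}\le c(\delta)$, and \eqref{apri2reg} follows. I expect the main obstacle to be purely one of bookkeeping: several of the constants above degenerate as $\delta\to0$, which is permitted here, and what makes them finite at all for fixed $\delta$ is exactly the affine-linearity of $F_{\f S}$ in $\nabla\f d$ (so that the top-order term $\di F_{\f S}$ never produces a quadratic expression in $\nabla^2\f d_n$) together with the compatibility $\varepsilon(\delta)=\delta$ between regularization and penalization.
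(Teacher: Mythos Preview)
Your proposal is correct and follows essentially the same route as the paper: read off the $n$-independent bounds from \eqref{entrodiss}, turn the control of $\delta\|\Lap\fn d\|_{\Le}^2$ into an $L^\infty(\Hc)$ bound on $\fn d$ via \eqref{H2-Lambda}, and then isolate $\delta\Lapp\fn d$ from \eqref{qn} and estimate the remaining nonlinear terms by Sobolev embeddings. The only cosmetic differences are that the paper obtains the $\He$ bound separately (and $\delta$-independently) from Proposition~\ref{coerc1} before passing to $\Hc$, and bounds $\|F_{\f h}-\di F_{\f S}\|_{\Le}$ in one stroke via Gagliardo--Nirenberg as $\le c(\|\fn d\|_{\Hc}^6+1)$; you instead exploit directly the affine-linearity of $F_{\f S}$ in $\f S$ and the embeddings $\Hc\hookrightarrow\f L^\infty$, $\He\hookrightarrow\f L^4$. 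Your treatment is in fact slightly more careful in one respect: you explicitly bound the penalization term $\varepsilon^{-1}(|\fn d|^2-1)\fn d$ using $\||\fn d|^2-1\|_{\Le}^2\le c\varepsilon$, a step the paper's proof glosses over.
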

\begin{proof}
With the a priori estimate~\eqref{entrodiss} and Proposition~\ref{coerc1} we get
\begin{align}
\| \Lap \fn d \|_{L^\infty(\Le)}^2 + \| \fn d \|_{L^\infty(\He)}^2 \leq c \,.\label{schranken}
\end{align}
The definition of the variational derivative~\eqref{qtilde} of the free energy and Young's inequality provide
\begin{align*}
\| \fn q \|_{\Le}^2 \geq \frac{1}{2}\| R_n \Lapp \fn d \|_{\Le}^2 - \left \| R_n \left ( F_{\f h} ( \fn d , \nabla \fn d) - \di  F_{\f S} ( \fn d , \nabla \fn d)\right ) \right \|_{\Le}^2 \,.
\end{align*}
Since  $\Lap \Sr \f d_1 =0$, we get $\Lapp \fn d\in Z_n$ and thus 
$ R_n \Lapp \fn d = \Lapp \fn d $.
Additionally, $R_n$ is an orthogonal projection and, using the partial derivatives of the Oseen--Frank energy~\eqref{FSFh}, we can estimate the norm of the variational derivative 
\begin{align*}
& \left \| R_n \left ( F_{\f h} ( \fn d , \nabla \fn d) - \di  F_{\f S} ( \fn d , \nabla \fn d)\right ) \right \|_{\Le}^2 \\ &\qquad\leq{}  \left \|   F_{\f h} ( \fn d , \nabla \fn d) \right \|_{\Le}^2+ \left \|  \di  F_{\f S} ( \fn d , \nabla \fn d) \right \|_{\Le}^2 \\ &\qquad\leq{}  c\left ( \| \fn d \|_{\f W^{1,4}}^4\| \fn d\|_{\f L^\infty}^2  +  \|\fn d \|_{\Hc}^2\right ) + c\left ( \|\fn d \|_{\Hc}^2 \| \fn d \|_{\f L^\infty}^4 + \| \fn d \|_{\f W^{1,4}}^4\| \fn d\|_{\f L^\infty}^2 \right )\,.
\end{align*}
Gagliardo--Nirenberg's inequality (see~\cite[Section~21.19]{zeidler2A})  yields  
\begin{align*}
 \left \|  F_{\f h} ( \fn d , \nabla \fn d) - \di  F_{\f S} ( \fn d , \nabla \fn d) \right \|_{\Le}^2  \leq{}& c \left (\|  \fn d \|_{\Hc }^3 \| \fn d \|_{\He} \| \fn d \|_{\Hc}^{\nicefrac{3}{2}} \| \fn d \|_{\Le}^{\nicefrac{1}{2}}+ \| \fn d \|_{\Hc}^2 \right) 
 \\& +c \left (  \|\fn d \|_{\Hc}^2 \| \fn d \|_{\Hc}^{3} \| \fn d \|_{\Le}\right )\\ \leq{}& c ( \| \fn d \|_{\Hc}^6 +1) \,.
\end{align*}
Due to the coercivity~\eqref{nad} and the estimate~\eqref{entrodiss}, we can bound the right-hand side of the above inequality, which implies the assertion.
\end{proof}
\begin{rem}
It should be emphasized that the last a priori estimate depends on $\delta$. This estimate does not hold for $\delta \ra 0$.

\end{rem}


We are now going to estimate the time derivatives of the approximate solutions in appropriate norms.
\begin{proposition}\label{prop:time}
Under the assumptions of Theorem~\ref{thm:weak}  there is a constant $C>0$,
depending on the initial values $\f v_0 , \, \f d_0$
and right-hand side $\f g$, such that for all $n\in\N$ and $\delta\in(0,1)$ 
\begin{align}
 \| \partial_t \fn v \|_{L^{2}((\f H^2 \cap \V)^*)} + 
 \| \partial_t \fn d\|_{L^{2}(\Hd)} \le C \, .\label{esttime}
\end{align}
\end{proposition}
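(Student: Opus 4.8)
The plan is to estimate $\partial_t \fn v$ and $\partial_t \fn d$ separately by testing the discrete equations \eqref{vdis} and \eqref{ddis} against appropriate functions and using the $\delta$-independent a priori bounds from Proposition~\ref{prop:apri} together with the $\delta$-dependent ones from Proposition~\ref{prop:apri2}. Recall first that, since $P_n$ is bounded on $\f H^2\cap\V$ and $R_n$ is the orthogonal projection onto $Z_n\subset\Hd$, for any $\f w\in\f H^2\cap\V$ and $\f z\in\Hb$ we have $\|P_n\f w\|_{\f H^2\cap\V}\le c\|\f w\|_{\f H^2\cap\V}$ and $\|R_n\f z\|_{\Hb}\le c\|\f z\|_{\Hb}$; hence $\langle\partial_t\fn v,\f w\rangle = \langle\partial_t\fn v, P_n\f w\rangle$ and $(\partial_t\fn d,\f z)=(\partial_t\fn d, R_n\f z)$, so it suffices to bound the right-hand sides of \eqref{vdis} and \eqref{ddis} uniformly for test functions of unit norm.

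For the director equation, I would take $\f z = R_n\f \psi$ with $\f\psi\in\Hb$, $\|\f\psi\|_{\Hb}\le1$, in \eqref{ddis}, giving
\begin{align*}
(\partial_t\fn d,\f\psi) = -((\fn v\cdot\nabla)\fn d, R_n\f\psi) + (\skn v\fn d, R_n\f\psi) - \lambda(\syn v\fn d, R_n\f\psi) - (\fn q, R_n\f\psi)\,.
\end{align*}
The convective term is controlled by $\|\fn v\|_{\f L^3}\|\nabla\fn d\|_{\Le}\|R_n\f\psi\|_{\f L^6}$, which via Sobolev embedding and \eqref{apri2reg} lies in $L^2(0,T)$; the terms with $\skn v\fn d$ and $\syn v\fn d$ are bounded by $\|\nabla\fn v\|_{\Le}\|\fn d\|_{\f L^\infty}\|R_n\f\psi\|_{\Le}$, using $\|\fn d\|_{\f L^\infty}\le c$ from \eqref{schranken}, and the $L^2(0,T)$-integrability of $\|\nabla\fn v\|_{\Le}$ from \eqref{entrodiss}; and the term $(\fn q, R_n\f\psi)$ is bounded by $\|\fn q\|_{\Le}\|R_n\f\psi\|_{\Le}$ with $\|\fn q\|_{L^2(\Le)}\le c$ again from \eqref{entrodiss}. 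This yields $\|\partial_t\fn d\|_{L^2(\Hd)}\le C$ with $C$ in fact $\delta$-independent.

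For the velocity equation, take $\f w = P_n\f\varphi$ with $\f\varphi\in\f H^2\cap\V$, $\|\f\varphi\|_{\f H^2\cap\V}\le1$, in \eqref{vdis}:
\begin{align*}
\langle\partial_t\fn v,\f\varphi\rangle = -((\fn v\cdot\nabla)\fn v, P_n\f\varphi) + (\nabla\fn d^T\fntq, P_n\f\varphi) - (\f T^L_{n,\delta}:\nabla P_n\f\varphi) + \langle\f g, P_n\f\varphi\rangle\,.
\end{align*}
The convective term is estimated by $\|\fn v\|_{\f L^4}^2\|\nabla P_n\f\varphi\|_{\Le}\le c\|\fn v\|_{\Le}^{1/2}\|\fn v\|_{\He}^{3/2}$, which is in $L^{4/3}(0,T)\subset L^1(0,T)$ by \eqref{entrodiss}; here I would note that using the $\f H^2$-test function is what makes $\nabla P_n\f\varphi$ available in $\f L^\infty$ or at least $\f L^6$, or alternatively one integrates by parts to put the derivative on $\fn v$. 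The Ericksen term $(\nabla\fn d^T\fntq, P_n\f\varphi)$: using \eqref{Erikseniden} one rewrites it via the director equation and the energy structure, but more directly $|(\nabla\fn d^T\fntq, P_n\f\varphi)|\le\|\nabla\fn d\|_{\f L^3}\|\fntq\|_{\f L^{3/2}}\|P_n\f\varphi\|_{\f L^\infty}$ — and here the $\delta$-dependence enters through $\fntq$ which contains $\delta\Lapp\fn d$; by \eqref{apri2reg} we have $\|\Lapp\fn d\|_{L^2(\Le)}\le c(\delta)$, and the remaining pieces of $\fntq$ are controlled as in the proof of Proposition~\ref{prop:apri2}. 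The Leslie term, by the structure of \eqref{lesliedis}, is bounded by a sum of products of $\|\syn v\|_{\Le}$, $\|\fn d\|_{\f L^\infty}$, $\|\fntq\|$ and $\|\nabla P_n\f\varphi\|$, all square-integrable in time. Finally $\langle\f g, P_n\f\varphi\rangle\le\|\f g\|_{\Vd}$. Collecting these, $\|\partial_t\fn v\|_{L^2((\f H^2\cap\V)^*)}\le C$.

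The main obstacle is the velocity time derivative: unlike $\partial_t\fn d$, its bound genuinely depends on $\delta$ (through the regularized Ericksen and Leslie stresses, which involve $\delta\Lapp\fn d$ and $\fntq$), and one must be careful that every term lands in $L^2(0,T)$ rather than merely $L^1(0,T)$ — the cubic convective term is the borderline case and is the reason the weaker dual space $(\f H^2\cap\V)^*$ (rather than $\Vd$) is used, so that $\nabla P_n\f\varphi$ enjoys enough integrability to absorb $\|\fn v\|_{\f L^4}^2$. Tracking the $\delta$-dependence carefully and confirming it is harmless at fixed $\delta$ (it degenerates only in the limit $\delta\to0$, which is the subject of later sections) is the crux; otherwise the estimates are routine applications of Hölder, Sobolev embeddings, Gagliardo--Nirenberg, and the two preceding propositions.
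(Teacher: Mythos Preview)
Your overall strategy (test against $P_n\f\varphi$, $R_n\f\psi$ and estimate term by term) matches the paper, but there is a genuine gap: the proposition asserts a bound uniform in \emph{both} $n$ and $\delta\in(0,1)$, and your velocity estimate explicitly concedes a $\delta$-dependent constant. This contradicts the statement; the $\delta$-independence is used immediately afterwards (see Corollary~\eqref{apri3}) to pass to the limit $\delta\to 0$.

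The point you miss is that $\|\fn q\|_{L^2(\Le)}$ is \emph{already} bounded independently of $\delta$ by the energy estimate~\eqref{entrodiss}; there is no need to split $\fn q$ into $\delta\Lapp\fn d$ plus lower-order pieces and estimate them separately via~\eqref{apri2reg}. With this, the Ericksen-type term is simply
\[
|(\nabla\fn d^T\fn q,\;P_n\f\varphi)|\le \|\nabla\fn d\|_{\Le}\|\fn q\|_{\Le}\|P_n\f\varphi\|_{\f L^\infty},
\]
using $\f H^2\hookrightarrow\f L^\infty$ and the $\delta$-independent bound $\|\nabla\fn d\|_{L^\infty(\Le)}\le c$ from the Oseen--Frank part of the energy (Proposition~\ref{coerc1}). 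The Leslie terms in~\eqref{lesliedis} likewise involve only $\syn v$, $\syn v\fn d$, $\fn d\cdot\syn v\fn d$, $\fn q$ and powers of $\fn d$, all controlled $\delta$-independently by~\eqref{entrodiss} together with $\|\fn d\|_{L^\infty(\f L^{12})}\le c$; this is the argument the paper imports from~\cite[Proposition~4.2]{unsere}.

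A smaller inconsistency: in your $\partial_t\fn d$ estimate you bound $\skn v\fn d$ via $\|\nabla\fn v\|_{\Le}\|\fn d\|_{\f L^\infty}$ and cite~\eqref{schranken} for $\|\fn d\|_{\f L^\infty}\le c$, but~\eqref{schranken} carries a constant of order $\delta^{-1}$, so your subsequent claim of $\delta$-independence does not follow from the argument given. The paper avoids this by estimating the right-hand side in $\f L^{\nicefrac{3}{2}}$ (using $\Hb\hookrightarrow\f L^3$ for $R_n\f\psi$) and writing $\|\skn v\fn d\|_{L^2(\f L^{\nicefrac{3}{2}})}\le \|\fn v\|_{L^2(\Hb)}\|\fn d\|_{L^\infty(\f L^6)}$, which uses only the $\delta$-independent $\He$-bound on $\fn d$.
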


\begin{proof}
The bound on $\{ \partial_t \fn v \}$ follows from the same argumentation as in~\cite[Proposition~4.2]{unsere}.



Recall that $R_n$ is the $\Le$-orthogonal projection onto $Z_n$ and a continuous mapping between $\Hb$ and itself. With the Sobolev embedding $ \Hb \hookrightarrow \f L^3  $  we thus find with \eqref{ddis} for all $t\in [0,T]$
\begin{align}
\begin{split}
\|\partial_t \fn d\|_{\Hd} &={}
\sup_{{\|\f\psi\|_{\Hb}\leq 1}} | ( \partial_t \fn d , \f\psi)|
= \sup_{{\|\f\psi\|_{\Hb}\leq 1}} | ( \partial_t \fn d , R_n \f\psi ) |
\\
\leq&{} \sup_{{\|\f\psi\|_{\Hb}\leq 1}}
\left\|  -(\fn v\cdot \nabla )\fn d +\left (\skn{v} - \lambda \syn v\right ) \fn d -  \fn q
\right\|_{\f L^{\nicefrac{3}{2}}} \| R_n \f\psi\|_{\f L^3}
\\
\leq&{}c \left (
\left\|  (\fn v\cdot \nabla )\fn d \right\|_{\f L^{\nicefrac{3}{2}}}
+ \left\|\skn{v}\fn d \right\|_{\f L^{\nicefrac{3}{2}}}
+ |\lambda| \left\|\syn v \fn d \right\|_{\f L^{2}}
+  \left\|\fn q
\right\|_{\f L^{2}}\right ) 
 .
\end{split}\label{dntabs}
\end{align}
In view of (\ref{entrodiss}), we see that
\begin{equation*}
\left\|\syn v \fn d \right\|_{L^{2}(\f L^{2})} \text{ and }
\left\|\fn q \right\|_{L^{2}(\f L^{2})}
\end{equation*}
are bounded.
It remains to consider the first two terms on the right-hand side of
\eqref{dntabs}.
With H\"older's inequality, we find
\begin{align*}
\left \| ( \fn v \cdot \nabla ) \fn d\right \| _{L^{2}(\f L^{\nicefrac{3}{2}})}+ 
\left \|\skn{v} \fn d \right \| _{L^{2}(\f L^{\nicefrac{3}{2}})} 
\le \left \|\fn v\right \|_{L^2(\f L^6)} \left \| \nabla \fn d\right \| _{L^{\infty}(\f L^2)}
+ \left \|\fn v\right \|_{L^2( \Hb )}\left \| \fn d \right \|_{ L^{\infty}( \f L^{6} )}
\, .
\end{align*}
Note that all terms on the right-hand side are bounded in view of \eqref{entrodiss}.

This proves the assertion.
\end{proof}

\subsection{Convergence of the approximate solutions}
The a priori estimates in the previous sections are crucial to deduce the convergence of a subsequence of solutions to the approximate system~\eqref{eq:dis}. 
%
\begin{proposition}\label{lem:limits}
There is a (not relabeled) subsequences $\{( \fn v , \fn d)\}$ of the sequence of solutions  to the approximate systems~\eqref{eq:dis} such that 
\begin{subequations}\label{wkonv}
\begin{align}
   \fn v &\stackrel{*}{\rightharpoonup}  \fd v&\quad&\text{ in } L^{\infty} (0,T;\Ha)\,,\label{w:vstern}\\
 \fn v &\rightharpoonup  \fd v&\quad&\text{ in }  L^{2} (0,T;\V)\,,\label{w:v}\\
\fn q &\rightharpoonup  \fd {\ov{q}}&\quad&\text{ in }  L^{2} (0,T;\Le)\,,\label{w:E}\\
\syn v \fn d &\rightharpoonup   \syd{v} \fd d &\quad& \text{ in }  L^{2} (0,T;\Le)\,,\label{w:Dd}\\
\fn d\cdot \syn v \fn d &\rightharpoonup  \fd d \cdot \syd{v} \fd d &\quad&\text{ in }  L^{2} (0,T;L^2)\,.\label{w:dDd}\\
\t \fn v  &\rightharpoonup \t \fd v &\quad&\text{ in } L^2(0,T; (\f H^2\cap\V)^*)\, ,\label{timev}\\
\t\fn d &\rightharpoonup \t \fd d&\quad&\text{ in } L^{2}(0,T; \Hd) \, ,\label{timed}
\\
\fn d &\stackrel{*}{\rightharpoonup}  \fd d&\quad&\text{ in } L^{\infty} (0,T;\Hc)\,,\label{w:ddstern}\\
\fn d &\rightharpoonup  \fd d&\quad&\text{ in } L^{2} (0,T;\Hg)\,,\label{w:dd}\\
\fn v &\ra \fd v&\quad&\text{ in }L^p (0,T; \Ha)\text{ for any } p \in [1,\infty)\, ,\label{s:v}\\
\fn d &\ra \fd d&\quad&\text{ in } L^q ( 0,T; \Hc )\text{ for any } q
 \in [1,\infty)\, .\label{s:d}
\end{align}
\end{subequations}
\end{proposition}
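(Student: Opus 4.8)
**The plan is to extract the convergences \eqref{wkonv} by a standard compactness argument from the a priori estimates of Propositions~\ref{prop:apri}--\ref{prop:time}.**

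First I would invoke the uniform bounds. Proposition~\ref{prop:apri2} gives, uniformly in $n$ (for fixed $\delta$), bounds for $\fn v$ in $L^\infty(\Ha)\cap L^2(\V)$, for $\fn d$ in $L^\infty(\Hc)$ and (via the bound on $\|\Lapp \fn d\|_{L^2(\Le)}$ together with \eqref{H4-Lambda}) in $L^2(\Hg)$, and for $\fn q$, $\syn v\fn d$, $\fn d\cdot\syn v\fn d$ in $L^2(\Le)$, $L^2(\Le)$, $L^2(L^2)$ respectively; Proposition~\ref{prop:time} gives bounds for $\t\fn v$ in $L^2((\f H^2\cap\V)^*)$ and $\t\fn d$ in $L^2(\Hd)$. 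By the Banach--Alaoglu theorem, since all the spaces appearing are duals of separable Banach spaces (or reflexive), I can pass to a subsequence (not relabeled) along which each of the weak-$*$ / weak convergences \eqref{w:vstern}--\eqref{timed}, \eqref{w:ddstern}, \eqref{w:dd} holds, with \emph{a priori} distinct limits; I denote the respective weak limits $\fd v$, $\fd{\ov q}$, and the weak limit of $\syn v\fn d$ by a new symbol, say $\f w$, and similarly the weak limit of $\fn d\cdot\syn v\fn d$.

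Next I would identify the limits and upgrade to strong convergence. For $\fn v$: the embedding $\Ha\hookrightarrow(\f H^2\cap\V)^*$ being compact, the Aubin--Lions--Simon lemma applied with $L^2(\V)$, the time-derivative bound in $L^2((\f H^2\cap\V)^*)$, and the $L^\infty(\Ha)$ bound gives $\fn v\to\fd v$ strongly in $C([0,T];(\f H^2\cap\V)^*)$ and in $L^2(\Ha)$; interpolating the latter with the $L^\infty(\Ha)$ bound yields \eqref{s:v} for every $p\in[1,\infty)$. For $\fn d$: the compact embedding $\Hg\hookrightarrow\hookrightarrow\Hc$ (Rellich, since $\Omega$ is bounded), together with the $L^2(\Hg)$ bound and the $L^2(\Hd)$ bound on $\t\fn d$ (note $\Hc\hookrightarrow\Hd$), again gives by Aubin--Lions a strongly convergent subsequence $\fn d\to\fd d$ in $L^2(\Hc)$; interpolating with the $L^\infty(\Hc)$ bound gives \eqref{s:d}. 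Having $\fn d\to\fd d$ strongly in $L^2(\Hc)\hookrightarrow L^2(\f W^{1,4})$ (Sobolev) and a.e.\ in $Q$, while $\syn v\fn d\rightharpoonup\f w$ weakly in $L^2(\Le)$, I can identify $\f w=\syd v\fd d$: write $(\syn v\fn d,\f\phi)=(\syn v,\fn d\otimes\f\phi)$ for smooth $\f\phi$, and since $\syn v\rightharpoonup\syd v$ weakly in $L^2$ while $\fn d\otimes\f\phi\to\fd d\otimes\f\phi$ strongly in $L^2$, the product converges to $(\syd v,\fd d\otimes\f\phi)=(\syd v\fd d,\f\phi)$; this yields \eqref{w:Dd}. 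Identifying the limit of $\fn d\cdot\syn v\fn d$ as $\fd d\cdot\syd v\fd d$ is analogous: pair with a scalar test function $\varphi$, rewrite as $(\syn v,(\varphi\fn d)\otimes\fn d)$, and use that $\varphi\fn d\otimes\fn d\to\varphi\fd d\otimes\fd d$ strongly in $L^2$ against the weak $L^2$ convergence of $\syn v$; this gives \eqref{w:dDd}. Finally, relabeling $\fd{\ov q}$ as stated, the convergence \eqref{w:E} is the bare Banach--Alaoglu extraction with no identification claimed at this point.

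The main obstacle is ensuring the compactness inputs are genuinely available, i.e.\ that the time-derivative bounds sit in spaces forming a valid Aubin--Lions triple with the spatial bounds --- in particular that $\t\fn d\in L^2(\Hd)$ is compatible with $\fn d\in L^2(\Hg)\cap L^\infty(\Hc)$ so that $\fn d\to\fd d$ strongly in $L^2(\Hc)$, which is what is needed both for \eqref{s:d} and for identifying the bilinear limits in \eqref{w:Dd}--\eqref{w:dDd}; this uses the chain of compact/continuous embeddings $\Hg\hookrightarrow\hookrightarrow\Hc\hookrightarrow\Hd$, valid since $\Omega$ is a bounded $\C^{3,1}$ domain. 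Everything else is a routine diagonal extraction of a single subsequence realizing all the listed convergences simultaneously together with the interpolation inequalities used for \eqref{s:v} and \eqref{s:d}.
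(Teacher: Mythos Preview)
Your proposal is correct and follows essentially the same route as the paper: a priori bounds from Propositions~\ref{prop:apri}--\ref{prop:time} feed Banach--Alaoglu for the weak/weak-$*$ convergences, Aubin--Lions with $\V\hookrightarrow\hookrightarrow\Ha$ and $\Hg\hookrightarrow\hookrightarrow\Hc$ yields the strong convergences, interpolation with the $L^\infty$ bounds upgrades to all $p,q\in[1,\infty)$, and the strong convergence of $\fn d$ identifies the bilinear limits \eqref{w:Dd}--\eqref{w:dDd}. Your presentation is in fact more detailed than the paper's on the identification step; the only minor slip is that the compact embedding driving strong convergence in $L^2(\Ha)$ is $\V\hookrightarrow\hookrightarrow\Ha$, not $\Ha\hookrightarrow(\f H^2\cap\V)^*$ (the latter gives strong convergence in $C([0,T];(\f H^2\cap\V)^*)$), but this does not affect the argument.
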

\begin{proof}
The existence of a weakly and weakly$^*$ converging subsequence follows from standard arguments from the a priori estimates~\eqref{entrodiss} and~\eqref{apri2reg} as well as~\eqref{esttime}.
The strong convergence follows from the Lions--Aubin compactness lemma
(see Lions~\cite[Th\'eor\`eme~1.5.2]{lions}).
Indeed, with respect to $\fn v$, we observe that $\V$ is compactly embedded
in $\Ha$, which implies strong convergence in
$L^2(0,T;\Ha)$ and together with the boundedness in $L^{\infty}(0,T;\Ha)$ also in $L^p(0,T;\Ha)$ for any $p\in [1,\infty)$.
With respect to $\fn d$, we observe that $\Hg$ is compactly embedded in $\Hc$, which implies strong convergence in $L^2(0,T;\Hc)$ and together with the boundedness in $L^{\infty}(0,T;\Hc)$ also in $L^q(0,T;\Hc)$
for any $q\in [1,\infty)$.  
This strong convergence allows to identify the limits in~\eqref{w:Dd} and~\eqref{w:dDd}.
\end{proof}

\begin{corollary}\label{cor:initial}
Under the assumptions of Theorem~\ref{thm:weak},
the limits $\fd v$ and $\fd d$ from Corollary~\ref{lem:limits} satisfy
\begin{equation*}
\fd v(0) = \f v_0 \text{ and } \fd d(0) = \f d_{0} \, .
\end{equation*}
\end{corollary}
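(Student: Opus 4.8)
The plan is to recover both initial values by the standard Galerkin-limit argument, treated separately for $\fd v$ and $\fd d$, using the time-derivative bounds \eqref{esttime} to give meaning to the value at $t=0$; the device is to integrate the time-derivative term by parts, once at the Galerkin level and once in the limit, and to compare the resulting boundary terms. For the velocity I would fix $m\in\N$, a test function $\f w\in W_m$, and a scalar weight $\phi\in\C^1([0,T])$ with $\phi(T)=0$ and $\phi(0)=1$. For $n\ge m$ I test \eqref{vdis} with $\f w$, multiply by $\phi$, integrate over $(0,T)$, and integrate $(\t\fn v,\f w)\phi$ by parts in time; since $\phi(T)=0$ this contributes the boundary term $-(\fn v(0),\f w)=-(P_n\f v_0,\f w)$, while the remaining integrals are exactly those handled in the passage to the limit in the weak formulation.

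By the convergences of Proposition~\ref{lem:limits} together with $P_n\f v_0\to\f v_0$ in $\Ha$, letting $n\to\infty$ produces an identity for $\fd v$ carrying the boundary term $-(\f v_0,\f w)$. Running the same computation with a weight $\psi\in\C_c^\infty(0,T)$ first shows that $t\mapsto(\fd v(t),\f w)$ is absolutely continuous with an integrable derivative (equivalently, one invokes \eqref{timev} and $\fd v\in L^\infty(0,T;\Ha)$ to get $\fd v\in\C([0,T];(\f H^2\cap\V)^*)\cap\C_w([0,T];\Ha)$), so $\fd v(0)$ is a well-defined element of $\Ha$ and the limiting identity may itself be integrated by parts, contributing $-(\fd v(0),\f w)$. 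Comparing the two identities cancels every interior integral and leaves $(\fd v(0),\f w)=(\f v_0,\f w)$ for all $\f w\in\bigcup_m W_m$; density of this union in $\Ha$ gives $\fd v(0)=\f v_0$.

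For the director I would repeat the scheme with \eqref{ddis}, test functions $\f z\in Z_m$ and the same weight $\phi$, using $\fn d(0)=R_n\f d_0\to\f d_0$ in $\Le$ (which suffices) and the convergences of Proposition~\ref{lem:limits}. From \eqref{timed} together with $\fd d\in L^\infty(0,T;\Hc)$ the limit $\fd d$ has a representative in $\C([0,T];\Hd)$, so $\fd d(0)$ is well defined, and the comparison of the Galerkin identity with the limiting one yields $(\fd d(0),\f z)=(\f d_0,\f z)$ for all $\f z$ in a dense subset of $\Le$, hence $\fd d(0)=\f d_0$.

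I do not expect a genuine obstacle here: the nonlinear terms all carry the extra bounded factor $\phi$ and pass to the limit exactly as in the identification of $(\fd v,\fd d)$ as a solution of \eqref{weak}, so no new estimate is required. The only point deserving a line of care is the legitimacy of the time integration by parts and the meaning of $\fd v(0)$ and $\fd d(0)$; this is the familiar Lions--Magenes/Strauss continuity lemma, applicable precisely because of the regularity recorded in \eqref{weakreg} (equivalently \eqref{timev} and \eqref{timed}).
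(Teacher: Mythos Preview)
Your argument is correct and is precisely the standard Galerkin initial-value identification (test with $\phi(t)\f w$, integrate by parts once at level $n$ and once in the limit, compare boundary terms, invoke the Lions--Magenes/Strauss weak continuity lemma for the limit to make sense of the value at $t=0$). The paper does not give its own proof here but refers to \cite[Corollary~4.5]{unsere}, where exactly this device is used; so your approach coincides with the intended one.
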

The\textit{Proof} can be found in~\cite[Corollary~4.5]{unsere}.

With the following proposition, we identify the limit $\bar{\f q}_\delta$ in \eqref{w:E}.
\begin{proposition}\label{Eweak}
Under the assumptions of Theorem~\ref{thm:weak}, the limit $\bar{\f q}_\delta$  in~\eqref{w:E} is given by $\bar{\f q}_\delta = \fd q$, where $\fd q $ is given by \eqref{qtilde}.
\end{proposition}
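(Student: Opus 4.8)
The plan is to pass to the limit in each summand of the projected variational derivative~\eqref{qn} appearing in the approximate director equation~\eqref{ddis},
\[
\fn q = R_n\Big( F_{\f h}(\fn d,\nabla\fn d) - \di F_{\f S}(\fn d,\nabla\fn d) + \tfrac1\varepsilon\big(|\fn d|^2-1\big)\fn d \Big) + \delta\,\Lapp\fn d ,
\]
and to identify the limit with $\fd q$ from~\eqref{qtilde} evaluated at $\fd d$. Observe first that $\fd q\in L^2(0,T;\Le)$: by~\eqref{w:ddstern} and~\eqref{w:dd} the limit satisfies $\fd d\in L^\infty(0,T;\Hc)\cap L^2(0,T;\Hg)$, every summand of $F_{\f h}$ and $\di F_{\f S}$ (see~\eqref{FSFh}) is a polynomial in $\fd d,\nabla\fd d,\nabla^2\fd d$ that is at most linear in $\nabla^2\fd d$, and $\Hc\hookrightarrow\f L^\infty\cap\f W^{1,6}$ in three dimensions, so $\fd q$ is controlled in $L^2(0,T;\Le)$ by $\|\fd d\|_{L^\infty(\Hc)}$ and $\|\fd d\|_{L^2(\Hg)}$. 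Since $\bar{\f q}_\delta$ is the weak-$L^2$ limit of $\fn q$ by~\eqref{w:E}, it suffices to show
\[
\int_0^T \big(\fn q(t),\f \psi(t)\big)\,\de t \longrightarrow \int_0^T \big(\fd q(t),\f \psi(t)\big)\,\de t \qquad\text{for every }\f \psi\in L^2(0,T;\Le);
\]
uniqueness of weak limits in $L^2(0,T;\Le)$ then forces $\bar{\f q}_\delta=\fd q$. Since $R_n$ is the $\Le$-orthogonal, hence self-adjoint, projection onto $Z_n$, the left-hand side equals $\int_0^T(G_n(t),R_n\f \psi(t))\,\de t + \delta\int_0^T(\Lapp\fn d(t),\f \psi(t))\,\de t$, where $G_n:=F_{\f h}(\fn d,\nabla\fn d) - \di F_{\f S}(\fn d,\nabla\fn d) + \tfrac1\varepsilon(|\fn d|^2-1)\fn d$.

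The top-order, linear contribution is immediate. By~\eqref{w:dd} we have $\fn d\rightharpoonup\fd d$ in $L^2(0,T;\Hg)$, hence $\Lapp\fn d\rightharpoonup\Lapp\fd d$ weakly in $L^2(0,T;\Le)$; moreover $R_n\f \psi\to\f \psi$ strongly in $L^2(0,T;\Le)$, since the $R_n$ are uniformly bounded and $\bigcup_{n}Z_n$ is dense in $\Le$. Therefore $\delta\int_0^T(\Lapp\fn d,\f \psi)\,\de t\to\delta\int_0^T(\Lapp\fd d,\f \psi)\,\de t$. This is the only place where a merely weak (rather than strong) convergence enters, and it is harmless precisely because this term depends linearly on $\fn d$.

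For the nonlinear part $G_n$ the decisive fact is that the fixed regularisation $\delta>0$ provides, by Proposition~\ref{lem:limits} (in particular~\eqref{s:d}), strong convergence $\fn d\to\fd d$ in $L^q(0,T;\Hc)$ for \emph{every} finite $q$; combining this with the $\delta$-dependent bound $\fn d\in L^2(0,T;\Hg)$ from~\eqref{apri2reg} and the interpolation inequality $\|w\|_{\f H^3}^2\le c\|w\|_{\Hc}\|w\|_{\Hg}$ keeps $\fn d$ bounded in $L^4(0,T;\f H^3)$ and gives $\fn d\to\fd d$ strongly in $L^2(0,T;\f H^3)$, while $\Hc\hookrightarrow\f L^\infty\cap\f W^{1,6}$ in three space dimensions. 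By~\eqref{FSFh}, every summand of $G_n$ is a cubic term in $\fn d$, a product $\fn d\cdot(\nabla\fn d)^2$, a purely second-order term $\nabla^2\fn d$, or a quasilinear second-order term of the type $|\fn d|^2\,\nabla\di\fn d$; decomposing every product by $a_nb_n-ab=(a_n-a)b_n+a(b_n-b)$ and estimating by H\"older in $(\f x,t)$ — the highest-order factor being placed in the space where its strong convergence has just been recorded ($\nabla^2\fn d$ in $L^4(0,T;\Le)$, respectively $\nabla\di\fn d$ in $L^2(0,T;\Le)$), the remaining factors in $L^q$-type spaces where they converge strongly — one obtains $G_n\to G$ strongly in $L^2(0,T;\Le)$, where $G$ is the same expression evaluated at $\fd d$. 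Together with $R_n\f \psi\to\f \psi$ in $L^2(0,T;\Le)$ this yields $\int_0^T(G_n,R_n\f \psi)\,\de t\to\int_0^T(G,\f \psi)\,\de t$. (Alternatively, one can extract a further subsequence along which $\fn d,\nabla\fn d,\nabla^2\fn d$ converge a.e.\ in $Q=\Omega\times(0,T)$, conclude $G_n\to G$ a.e., and combine this with the $L^2(0,T;\Le)$-bound on $G_n$ from Proposition~\ref{prop:apri2} and an equi-integrability argument to obtain weak $L^2$ convergence of $G_n$ — enough for the pairing with $R_n\f \psi$, and the subsequence is immaterial since the limit is unique.)

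Adding the two contributions gives $\int_0^T(\fn q,\f \psi)\,\de t\to\int_0^T(G+\delta\Lapp\fd d,\f \psi)\,\de t=\int_0^T(\fd q,\f \psi)\,\de t$ for all $\f \psi\in L^2(0,T;\Le)$, whence $\bar{\f q}_\delta=\fd q$ by comparison with~\eqref{w:E}. The only real work is the bookkeeping in the third paragraph: one must verify that $\di F_{\f S}$, which carries the top-order quasilinear pieces $|\fn d|^2\nabla\di\fn d$ and the analogous ones, converges in $L^2(0,T;\Le)$ with a rate — and this is exactly where the strong $L^q(0,T;\Hc)$ convergence, i.e.\ the presence of the fixed $\delta>0$, is indispensable; there is no deeper obstacle.
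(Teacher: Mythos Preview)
Your proof is correct. The route differs from the paper's in a few technical choices, all of which make your argument slightly more direct.

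The paper tests against $\f\psi\in L^2(0,T;\Hc\cap\Hb)$ and integrates by parts twice: once to turn $\delta(\Lapp\fn d-\Lapp\fd d,\f\psi)$ into $\delta(\Lap\fn d-\Lap\fd d,\Lap\f\psi)$ (handled by the strong $L^q(0,T;\Hc)$ convergence), and once to replace $\di F_{\f S}$ by $F_{\f S}$ against $\nabla(R_n\f\psi-\f\psi)$ when controlling the projection error. The nonlinear remainder is then treated by extracting a.e.\ convergence of $(\fn d,\nabla\fn d)$, invoking the cubic growth bounds on $F_{\f h},F_{\f S}$, and applying dominated convergence. You instead test with $\f\psi\in L^2(0,T;\Le)$ only, use the weak $L^2(0,T;\Hg)$ convergence to pass to the limit in the linear biharmonic term without integration by parts, and show the full nonlinear block $G_n\to G$ strongly in $L^2(0,T;\Le)$ by product estimates. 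Your approach avoids the need for regular test functions and the integration-by-parts bookkeeping; the paper's approach has the minor advantage of never writing down $\di F_{\f S}$ explicitly. One remark: the $\Hf$ interpolation you invoke is harmless but unnecessary---the strong $L^q(0,T;\Hc)$ convergence already gives $\nabla^2\fn d\to\nabla^2\fd d$ in $L^q(0,T;\Le)$ for every finite $q$, which is all the quasilinear terms in $\di F_{\f S}$ require.
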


\begin{proof}
We already established the weak convergence~\eqref{w:E}, we thus only need to identify the limit~$\bar{\f q}_\delta$.

Recalling that $R_n$ is the $\Le$-orthogonal projection onto $Z_n$ and $\f\gamma_0(\Delta \fn d)=0$,
we find 
\begin{align*}
\int_0^T \langle \fn q(t) -& \fd{{q}}(t) ,  \f \psi(t) \rangle \de t \notag\\
={}&  \intte{ \left \langle \delta(\Lapp \fn d (t)- \Lapp \fd d(t)) 
,  \f \psi(t) \right \rangle} 
\notag\\
&  + \frac{1}{\varepsilon}\intte{\left (
\left (R_n \left (( | \fn d (t)|^2 -1) \fn d (t)\right )  - ( | \fd d (t) |^2 -1) \fd d (t)\right ),  \f \psi(t) \right )} 
\notag\\
 &+\intte{ \left \langle F_{\f h} ( \fn d (t) , \nabla \fn d (t)) - \di F_{\f S}     ( \fn d (t) , \nabla \fn d (t)), R_n \f \psi(t) \right \rangle} \notag\\
 &-\intte{ \left \langle F_{\f h} ( \fd d(t), \nabla \fd d(t)) - \di F_{\f S}(\fd d(t), \nabla \fd d(t)), \f \psi(t)   \right \rangle     }\notag \\
 ={}
&   \intte{ \left \langle F_{\f h} ( \fn d (t) , \nabla \fn d (t))+ \frac{1}{\varepsilon}(| \fn d |^2 -1)\fn d , R_n \f \psi(t) - \f \psi(t) \right \rangle} 
\\&
+\intte{ \left \langle   F_{\f S}      ( \fn d (t) , \nabla \fn d (t));\nabla ( R_n \f \psi(t) - \f \psi(t)) \right \rangle} \\
&
 +\delta \intte{ \left \langle \Lap \fn d(t) - \Lap \fd d (t), \Lap \f \psi(t) \right \rangle} \notag\\ 
 &+\intte{ \left (F_{\f h} ( \fn d (t) , \nabla \fn d (t)) - F_{\f h} ( \fd d(t), \nabla \fd d(t))  , \f \psi(t)   \right ) }
\\
&+ \intte{ \left \langle  F_{\f S}      ( \fn d (t) , \nabla \fn d (t))-  F_{\f S} (\fd d(t), \nabla \fd d(t)), \nabla \f \psi(t) \right \rangle}\\
 &+\frac{1}{\varepsilon}\intte{ \left (  ( | \fn d (t)|^2 -1) \fn d (t) - ( | \fd d(t) |^2 -1) \fd d(t) , \f \psi(t)   \right ) }
 \\
 ={}& I_{1,n} + I_{2,n} + I_{3,n}+ I_{4,n} + I_{5,n} +I_{6,n}\, 
\end{align*}
for all $\f \psi  \in L^2( 0,T;\Hc\cap \Hb)$.
First, we remark that in regard of definition~\eqref{FSFh}, we have 
\begin{align}
\begin{split}
|F_{\f h} ( \f h , \f S)| &
\leq c ( | \f S|^2 +  | \f h|^2 ) | \f h| \leq c ( | \f S|^3 + | \f h|^3 ) \, ,\\
|F_{\f S} ( \f h , \f S) |& \leq c | \f S | ( | \f h |^2 +1)\leq c( | \f S|^3 + | \f h |^3 +1) \, ,\\
|( | \f h|^2 -1)\f h |& \leq c (|\f h|^3+1)\,
\end{split}
\label{Fwachs}
\end{align}
for all $ \f h \in \R^ d $, $\f S \in \R^{d \times d}$.
Due to standard Sobolev embeddings we know $ \Hc \hookrightarrow \f W^{1,6} \hookrightarrow \f L^\infty$. The a priori bound~\eqref{apri2reg} especially the $L^\infty(0,T;\Hc) $ bound on $\fn d$, together with the estimates~\eqref{Fwachs} shows, that $ F_{\f h} ( \fn d  , \nabla \fn d ) $, $ F_{\f S}      ( \f d_n, \nabla \f d_n) $ and the penalization term are bounded in $ L^\infty(0,T; \f L^2)$ independently of $n$. 
Moreover, $R_n$ is the $\Hb$-orthogonal projection onto $Z_n$ if we equip $\Hb$ with the inner product $( \cdot\, ; \f \Lambda:  \cdot )$. Since the norm induced by this inner product is equivalent to the standard norm, we find that for all $\f \psi  \in L^2( 0,T;\Hb)$
\begin{align*}
\lim_{n\ra \infty} \| R_n \f \psi - \f \psi \|_{L^2(\Hb)}= 0\, .
\end{align*}
This shows that $I_{1,n}$ and $I_{2,n}$ converge to $0$ as $n\to\infty$.
 Due to the strong convergence in $L^\infty(0,T; \Hc)$ (see~\eqref{s:d}), the term~$I_{3,n}$ converges to zero.

Let us now consider the terms $I_{4,n}$, $I_{5,n}$, and $I_{6,n}$.
Due to the strong convergence~\eqref{s:d} and standard Sobolev embeddings, we observe that 
(passing to a subsequence if necessary)
\begin{equation*}
\fn d(\f x,t)  \to \fd d(\f x,t)  \, , \quad
\nabla \fn d (\f x,t)  \to \nabla \fd d(\f x,t)
\end{equation*}
for almost all $(\f x,t) \in \Omega \times (0,T)$. Moreover,
$|\fn d(\f x,t)|$ 
and $|\nabla\f d_n(\f x,t)|$ are majorized by a function in
$L^6(0,T;  L^6)$. The growth conditions~\eqref{Fwachs} then show that $F_{\f h} ( \fn d (t) , \nabla \fn d (t))$, $F_{\f S}      ( \fn d (t) , \nabla \fn d (t))$ and $\nicefrac{1}{\varepsilon}(| \fn d |^2 -1) \fn d $ are majorized by a function in $L^2 (0,T; \f L^2)$.
With the continuity of $F_{\f h}$ and $F_{\f S}$ as well as Lebesgue's theorem on dominated convergence, we  find that
$I_{4,n}$, $I_{5,n}$ and $I_{6,n}$ converge to $0$ as $n\to\infty$.
\end{proof}

We are now ready to prove that the approximate solution $\{( \fn v, \fn d)\}$ converges to a weak solution of the regularized system~\eqref{defi:weak}.

\begin{proof}[Proof of Theorem~\ref{thm:weak}]
It only remains to prove that the limit $(\fd v, \fd d)$
from Corollary~\ref{lem:limits} satisfies the original problem in the sense of Definition~\ref{defi:weak}. This is shown by passing to the limit in the approximate problem \eqref{eq:dis}.

Let us start with the approximation  \eqref{ddis} of the director equation. First, we observe convergence of the term incorporating the time derivative because of \eqref{timed}. The three semilinear terms converge due to the strong convergence of the director~\eqref{s:d} and the weak as well as the strong convergence of the velocity field~\eqref{w:v} and~\eqref{s:v}. 
Thus, we have 
\begin{multline*}
\int_0^T ((\fn v(t)\cdot \nabla) \fn d(t) -(\nabla \fn v(t))_{\skw} \fn d(t)+\lambda (\nabla \fn v(t))_{\sym}\fn d (t) , \f \psi(t)) \de t\\
\to
\int_0^T ((\fd v(t)\cdot \nabla) \fd d(t)-(\nabla \fd v(t))_{\skw} \fd d(t)+\lambda (\nabla \fd v(t))_{\sym} \fd d(t)  , \f \psi(t)) \de t \, 
\end{multline*}
for all $\f\psi \in \mathcal{C}_c^\infty(\Omega \times (0,T);\R^3))$.
The variational derivative of the free energy converges due to Proposition~\ref{Eweak}.

All this shows that the limit $(\f v, \f d)$ of the approximate solutions satisfy the original equations~\eqref{eq:dir}. Moreover, Corollary~\ref{cor:initial} shows that the initial conditions are also fulfilled.
Remark that in view of the a-priori estimate~\eqref{entrodiss}, the equation 
\begin{align}
 \t \fd d + ( \fd v \cdot \nabla ) \fd v - \skd v \fd d = \fd e = - \lambda \syd v \fd d - \fd q \, \label{gleiche}
\end{align}
holds in $L^2(0,T;\Le)$. Not all terms on the left-hand side of~\eqref{gleiche} are known to be bounded in~$L^2(0,T;\Le)$, but their sum, i.e.~the term $\fd e$, is.

In the following, we focus on the limiting procedure in the approximation
\eqref{vdis} of the Navier--Stokes-like equation. In view of~\eqref{timev}, we already know that the term incorporating the time derivative converges. 
Moreover, we find with
\eqref{s:v} the convergence of the convection term such that for all solenoidal $\f\varphi \in \mathcal{C}_c^\infty(\Omega\times (0,T);\R^3))$
\begin{equation*}
\int_0^T ((\fn v(t)\cdot \nabla) \fn v (t), \f \varphi(t)) \de t
\to
\int_0^T ((\fd v(t)\cdot \nabla) \fd v(t), \f \varphi(t)) \de t \, .
\end{equation*}
With Proposition~\ref{Eweak}, the convergences \eqref{w:E}, \eqref{s:d} and calculation~\eqref{Erikseniden}, we find that
\begin{equation*}
\intte{\left ( \nabla \fn d^T(t) \fn q (t)  ,  \f \varphi(t)
\right )}
\to
\intte{\left ( \nabla\fd d^T(t) \fd q (t)  ,  \f \varphi(t)
\right )}= \intte{\left ( {\f T}_{\delta}^E  ; \nabla \f \varphi(t)
\right )} \,.
\end{equation*}
It is essential that calculation~\eqref{Erikseniden} is applied in the limit, since it does not hold for the approximate analogues. 
With respect to the term incorporating the Leslie tensor, we only focus on the first term that is the least regular one. With \eqref{w:v} and \eqref{s:d}, we find that
\begin{multline*}
\intte{\left ( (\fn d(t) \cdot  (\nabla \fn v(t))_{\sym}
\fn d(t) )\fn d(t) \otimes \fn d(t)
; \nabla \f \varphi(t)
\right )}
\\
\to
\intte{\left ( (\fd d(t) \cdot  (\nabla \fd v(t))_{\sym}
\fd d(t) )\fd d(t) \otimes \fd d(t)
; \nabla \f \varphi(t)
\right )}
\,.
\end{multline*}
This, together with similar observations for the other terms,
shows that
\begin{equation*}
\intte{({\f T}_{n,\delta}^L (t): \nabla \f \varphi(t) ) }
\to \intte{(\tilde{\f T}_{\delta}^L (t): \nabla \f \varphi(t) ) } \, ,
\end{equation*}
where $\tilde{\f T}_{\delta}^L$ is given by
\begin{align*}
\begin{split}
\tilde{\f T}_{\delta}^L :={}&  \mu_1 (\fd d \cdot  \syd v \fd d )\fd d \otimes \fd d+\mu_4 \syd v  - (\mu_2+\mu_3) \left (\fd d \otimes \fd q     \right )_{\sym} \\&
- \left (\fd d \otimes \fd q   \right)_{\skw} + ((\mu_5+\mu_6) -\lambda(\mu_2+\mu_3))  \left ( \fd d \otimes \syd v \fd d \right)_{\sym}\, .
\end{split}
\end{align*}
Due to~\eqref{gleiche}, $\tilde{\f T}_{\delta}^L$ is equivalent to $\f T^L_{\delta}$ defined analogously to \eqref{Leslie} by
\begin{align}
\begin{split}
\f T^L_{\delta} ={}&  \mu_1 (\fd d \cdot \syd{v}\fd d )\fd d \otimes \fd d +\mu_4 \syd{v}
 + {(\mu_5+\mu_6)} \left (  \fd d \otimes\syd{v}\fd d \right )_{\sym}
\\
& +{(\mu_2+\mu_3)} \left (\fd d \otimes \fd e  \right )_{\sym}
 +\lambda \left ( \fd d \otimes \sy{v}\fd d  \right )_{\skw} + \left (\fd d \otimes \fd e  \right )_{\skw}\, 
\end{split}\label{Lesliedel}
\end{align}
with
\begin{align}
\fd e := \t \fd d + ( \fd v \cdot \nabla ) \fd d - \skd v \fd d \,.\label{edel}
\end{align}
This proofs Theorem~\ref{thm:weak}.

\end{proof}
\section{Convergence for vanishing regularization\label{sec:convmeas}}
\subsection{A priori estimates independent of the regularization}

The next lemma is a coercivity estimate for the free energy.
\begin{proposition}[Coercivity I]
\label{coerc1}
Let $\f d \in \Hc $. Then the following holds:
\begin{align}
\| \f d\|_{\He}^2 \leq \int_\Omega ( ( \di \f d )^2 + | \curl \f d |^2 )   \de \f x + c \| \f d \|_{\Hrand{3}}^2 \, \label{nad}
\end{align} 
and
\begin{align*}
\int_\Omega | \f d |^2 |\nabla \f d|^2 \de \f x \leq 2  \int_{\Omega} (| \f d |^2 ( \di \f d )^2 +  ( \f d\cdot \curl \f d )^2 + | \f d \times \curl \f d |^2)\de \f x + c\| \f d\|_{\Hrand{3}}^4 \, .
\end{align*}
\end{proposition}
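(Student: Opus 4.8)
The plan is to derive both inequalities from the pointwise vector–calculus identity recorded just before the statement, namely $|\nabla \f d|^2 = (\di \f d)^2 + |\curl \f d|^2 + \di \f F$ with $\f F := \nabla \f d\,\f d - (\di \f d)\f d$, together with $|\f d|^2|\curl \f d|^2 = (\f d \cdot \curl \f d)^2 + |\f d \times \curl \f d|^2$; by density it suffices to argue for smooth $\f d$. For the first estimate I would integrate the identity over $\Omega$, so that $\int_\Omega |\nabla \f d|^2 = \int_\Omega\big((\di \f d)^2 + |\curl \f d|^2\big) + \int_{\partial\Omega}\f F \cdot \f n\,\de S$ by the divergence theorem, and then observe that $\f F \cdot \f n$ restricted to $\partial\Omega$ depends only on the trace of $\f d$: writing $\f F\cdot\f n = (\f d\cdot\nabla)\f d\cdot\f n - (\di\f d)(\f d\cdot\f n)$ in coordinates adapted to $\partial\Omega$ at a given boundary point, in which $\f n$ points along the third coordinate axis there, the terms involving the normal derivative of $\f d$ cancel, leaving an expression bilinear in $\f d$ and the tangential derivatives of $\f d$. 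Since $\partial\Omega\in\C^{3,1}$ this gives $|\f F\cdot\f n|\le c\,|\f d|\,|\nabla_\tau\f d|$ on $\partial\Omega$, hence $|\int_{\partial\Omega}\f F\cdot\f n\,\de S|\le c\|\f d\|_{\f L^2(\partial\Omega)}\|\nabla_\tau\f d\|_{\f L^2(\partial\Omega)}\le c\|\f d\|_{\Hrand{3}}^2$ after using $\Hrand{3}\hookrightarrow\f H^1(\partial\Omega)$; combined with the trace--Poincaré inequality $\|\f d\|_{\f L^2(\Omega)}\le c(\|\nabla\f d\|_{\f L^2(\Omega)}+\|\f d\|_{\f L^2(\partial\Omega)})$ to absorb the remaining lower-order part of the $\He$-norm, this proves~\eqref{nad}.

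For the second estimate I would multiply the pointwise identity by $|\f d|^2$ and integrate, which gives
\[
\int_\Omega |\f d|^2 |\nabla \f d|^2 = \int_\Omega |\f d|^2\big((\di \f d)^2 + |\curl \f d|^2\big) + \int_{\partial\Omega}|\f d|^2\,\f F \cdot \f n\,\de S - \int_\Omega \nabla(|\f d|^2) \cdot \f F .
\]
The boundary term is handled as before, now additionally using $\Hrand{3}\hookrightarrow\f L^\infty(\partial\Omega)$, so that $|\int_{\partial\Omega}|\f d|^2\,\f F\cdot\f n\,\de S|\le c\|\f d\|_{\f L^\infty(\partial\Omega)}^2\|\f d\|_{\f L^2(\partial\Omega)}\|\nabla_\tau\f d\|_{\f L^2(\partial\Omega)}\le c\|\f d\|_{\Hrand{3}}^4$.

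The interior term is the crux. Using $\nabla(|\f d|^2) = 2(\nabla \f d)^T\f d$ one computes $-\nabla(|\f d|^2)\cdot\f F = -2\,\f d\cdot(\nabla\f d)^2\f d + (\di\f d)\big(\f d\cdot\nabla|\f d|^2\big)$. Splitting $\nabla\f d$ into its symmetric part $S:=(\nabla\f d)_{\sym}$ and its skew part $W:=(\nabla\f d)_{\skw}=\tfrac12\rot{\curl\f d}$, the mixed terms in $\f d\cdot(\nabla\f d)^2\f d$ cancel, so that $\f d\cdot(\nabla\f d)^2\f d = |S\f d|^2-|W\f d|^2$ with $|W\f d|^2=\tfrac14|\f d\times\curl\f d|^2$, while $\f d\cdot\nabla|\f d|^2 = 2\,\f d\cdot S\f d$. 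Therefore
\[
-\nabla(|\f d|^2)\cdot\f F = -2|S\f d|^2 + \tfrac12|\f d\times\curl\f d|^2 + 2(\di\f d)(\f d\cdot S\f d) \le |\f d|^2(\di\f d)^2 + \tfrac12|\f d\times\curl\f d|^2,
\]
the last inequality being Young's inequality $2(\di\f d)(\f d\cdot S\f d)\le|S\f d|^2+|\di\f d|^2|\f d|^2$ (together with $|\f d\cdot S\f d|\le|\f d|\,|S\f d|$), which exactly absorbs the term $-2|S\f d|^2$. Inserting the boundary and interior bounds into the identity above and rewriting $|\f d|^2|\curl\f d|^2=(\f d\cdot\curl\f d)^2+|\f d\times\curl\f d|^2$ yields the claimed inequality.

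The step I expect to be the main obstacle is precisely this interior term $\int_\Omega\nabla(|\f d|^2)\cdot\f F$: a crude pointwise estimate gives only $c\int_\Omega|\f d|^2|\nabla\f d|^2$ and closes nothing, while integrating by parts once more merely reproduces the divergence-form identity and leads in a circle. The argument works because of the \emph{exact} cancellation of $-2|S\f d|^2$ against the cross term produced by Young's inequality. The other nontrivial ingredient — routine but essential — is the verification that $\f F\cdot\f n$ on $\partial\Omega$ contains only tangential derivatives of the trace, which is what keeps the boundary fluxes controlled by $\|\f d\|_{\Hrand{3}}$.
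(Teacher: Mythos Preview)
Your proof is correct and follows essentially the same route as the paper: both arguments rest on the pointwise identity $|\nabla\f d|^2=(\di\f d)^2+|\curl\f d|^2+\di\f F$ with $\f F=\nabla\f d\,\f d-(\di\f d)\f d$, integrate it (resp.\ its product with $|\f d|^2$) over $\Omega$, turn the divergence into a boundary term, and close the remaining interior contribution with Young's inequality. The only cosmetic difference is organization: the paper expands $\di(|\f d|^2\f F)$ directly in terms of $|\nabla\f d\,\f d|^2$, $|\nabla\f d^T\f d|^2$ and their difference, while you compute $-\nabla(|\f d|^2)\cdot\f F$ via the symmetric/skew splitting $\nabla\f d=S+W$; since $\f d^T(\nabla\f d)^2\f d=|S\f d|^2-|W\f d|^2$ these are the same identity written differently. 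Your explicit check that $\f F\cdot\f n$ on $\partial\Omega$ involves only tangential derivatives, and your use of a trace--Poincar\'e inequality to control the $\f L^2$-part of the $\He$-norm, make precise two steps the paper leaves implicit. One wording quibble: Young's inequality produces $+|S\f d|^2$, which combined with $-2|S\f d|^2$ leaves $-|S\f d|^2\le 0$ (so it over-absorbs rather than ``exactly absorbs''), but this only strengthens the estimate.
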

\begin{proof}
The following equality can be shown by means of  simple vector calculus,
\begin{align}
| \nabla \f d |^2 = ( \di \f d )^2 + | \curl \f d |^2 + \tr( \nabla \f d ^2 ) - ( \di \f d )^2 \, .\label{iden}
\end{align}
The last two terms can be written as the divergence of a vector field
\begin{align}
\tr( \nabla \f d ^2 ) - ( \di \f d )^2 = \di ( \nabla \f d \f d - (\di \f d) \f d) \label{div1}\,.
\end{align}
Integrating the identity~\eqref{iden} over $\Omega$, using Gau\ss{}' formula,
and estimating the boundary terms yields the desired estimate~\eqref{nad}.

Again, simple vector calculus shows that
\begin{align*}
| \f d |^2 |  \curl \f d| ^2 = ( \f d \cdot \curl \f d )^2 + | \f d \times \curl \f d |^2 \, .
\end{align*}
In the same way as in~\eqref{div1}, we calculate
\begin{align}
\begin{split}
\di ( ( \nabla \f d \f d - ( \di \f d ) \f d ) | \f d |^2) 
& = ( \tr ( \nabla \f  d ^2 ) - ( \di \f d)^2 ) | \f d|^2   - | \nabla \f d \f d - \nabla \f d^T \f d |^2 + | \nabla \f d \f d |^2 + | \nabla \f d ^T \f d |^2 \\&\quad - ( \di \f d) \f d \cdot \nabla \f d  \f d- ( \di \f d) \f d \cdot \nabla \f d^T  \f d  \,.
\end{split}\label{Numm}
\end{align}
Another vector identity grants that
\begin{align*}
| \nabla \f d \f d - \nabla \f d^T \f d |^2   = 4  | ( \nabla \f d)_{\skw} \f d |^2  = | \f d \times \curl \f d |^2 \, .
\end{align*}
The term $ | \f d |^2 | \nabla \f d|^2 $ integrated over the domain can be transformed via
\eqref{iden} and~\eqref{Numm} to
\begin{align*}
\int_{\Omega} | \f d|^2 | \nabla \f d|^2 \de \f x & = \int_{\Omega} ( ( \di \f d)^2 | \f d|^2 + |\f d |^2| \curl \f d |^2 + (\tr ( \nabla \f d ^2 ) - ( \di \f d ) ^2 ) | \f d|^2 ) \de \f x \\
& = \int_{\Omega} ( ( \di \f d)^2 | \f d|^2 + ( \f d \cdot \curl \f d )^2 + | \f d \times \curl \f d |^2 \de \f x \\ 
& \quad + \int_{\Omega} \di ( ( \nabla \f d \f d - ( \di \f d ) \f d ) | \f d |^2)\de \f x \\
& \quad  +\int_{\Omega} | \f d \times \curl \f d|^2 - | \nabla \f d \f d|^2 - | \nabla \f d^T \f d  |^2   \de \f x\\ 
& \quad + \int_{\Omega} ( \di \f d) \f d \cdot \nabla \f d  \f d+ ( \di \f d) \f d \cdot \nabla \f d^T  \f d  \de \f x
 \,.
\end{align*}
  Young's inequality, Gau\ss{}' formula  and appropriate estimates of the boundary terms show
  \begin{align*}
\int_{\Omega} | \f d|^2 | \nabla \f d|^2 \de \f x 
& \leq \int_{\Omega} ( ( \di \f d)^2 | \f d|^2 + ( \f d \cdot \curl \f d )^2 + 2| \f d \times \curl \f d |^2 \de \f x + c\|\f d \|_{\Hrand{3}}^4 \ \\ 
& \quad + \int_{\Omega} - | \nabla \f d \f d|^2 - | \nabla \f d^T \f d  |^2   \de \f x\\ 
& \quad +\int_{\Omega} \frac{1}{2}( \di \f d)^2  |\f d|^2   + |\nabla \f d  \f d|^2 + |\nabla \f d^T  \f d |^2  \de \f x\\ 
& = \int_{\Omega} ( \frac{3}{2}( \di \f d)^2 | \f d|^2 + ( \f d \cdot \curl \f d )^2 + 2| \f d \times \curl \f d |^2 \de \f x  + c\|\f d \|_{\Hrand{3}}^4  \,.
\end{align*}  
   Therewith, both asserted coercivity estimates are proven.
\end{proof}
%

\begin{corollary}[A priori estimates]
There is a constant $C>0$,
depending on the initial values $\f v_0 , \, \f d_0$
and right-hand side $\f g$, such that for all $\delta\in(0,1)$ the constructed weak solution of the regularized system $\{( \fd v, \fd d)\}$ fulfills the estimate
\begin{align}
\begin{split}
&\| \fd v  \|_{L^\infty(\f L^2)}^2 +  {\delta} \| \Lap \fd d\|_{L^\infty(\Le)}^2+
\| \fd d \|_{L^\infty(\Hb)}^2 + \sup_{t\in[0,T]} \intet{|\fd d(t)|^2|\nabla \fd d(t)|^2}\\
& \quad  +\left \Vert \fd d\cdot \syd v \fd d \right \Vert_{L^2(L^2)}^2
 +  \|\fd v \|_{L^2(\V)}^2+  \|\syd v\fd d\|_{L^2(\Le)}^2
\\& \quad+   \|{\f q_{\delta}}\|_{L^2(\Le)}^2  
+  \| \partial_t \fd v \|_{L^{2}((\f H^2 \cap \V)^*)}  
 + \| \partial_t \fd d\|_{L^{2}(\f L^{\nicefrac{3}{2}})}
 \leq C \,.
\end{split}
\label{apri3}
\end{align}
\end{corollary}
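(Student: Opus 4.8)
The plan is to obtain~\eqref{apri3} by passing to the limit $n\to\infty$ in the bounds already available for the Galerkin approximation and by checking that every constant in the argument is independent of $\delta\in(0,1)$. First I would inspect the discrete a priori estimate~\eqref{entrodiss} of Proposition~\ref{prop:apri}: its right-hand side is in fact bounded uniformly in \emph{both} $n$ and $\delta$. Indeed, in $\F_\delta(\f d_0)=\frac{\delta}{2}\|\Lap\f d_0\|_{\Le}^2+\F(\f d_0)+\frac{1}{4\varepsilon(\delta)}\bigl\||\f d_0|^2-1\bigr\|_{\Le}^2$ the penalization contribution vanishes because $|\f d_0|=1$ a.e., the first term is $\le c\,\|\f d_0\|_{\Hc}^2$ since $\delta<1$, and $\F(\f d_0)$ is finite and bounded in terms of $\|\f d_0\|_{\Hc}$ by~\eqref{frei} together with $\Hc\hookrightarrow\f L^\infty$; the only $n$-dependent term, $\|R_n\f d_0-\f d_0\|_{\Hc}$, tends to $0$. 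Likewise, once~\eqref{entrodiss} is $\delta$-uniform, the time-derivative bound of Proposition~\ref{prop:time} becomes $\delta$-uniform as well, because the $\delta$-weighted higher-order contributions to the regularized Ericksen stress enter the velocity equation only through the cancellation identity~\eqref{Erikseniden}, which rewrites them as $\bigl(\nabla\fd d^{T}\f q_\delta,\cdot\bigr)$, controlled by $\|\nabla\fd d\|_{L^\infty(\Le)}\|\f q_\delta\|_{L^2(\Le)}$.

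Next I would pass to $\liminf_{n\to\infty}$ in~\eqref{entrodiss} and in the time-derivative estimate, using the convergences collected in Proposition~\ref{lem:limits} together with weak and weak-$*$ lower semicontinuity of norms. This directly yields the $\delta$-uniform bounds for $\|\fd v\|_{L^\infty(\Le)}$, $\delta\|\Lap\fd d\|_{L^\infty(\Le)}$, $\|\syd v\fd d\|_{L^2(\Le)}$, $\|\fd d\cdot\syd v\fd d\|_{L^2(L^2)}$, $\|\f q_\delta\|_{L^2(\Le)}$, $\|\fd v\|_{L^2(\V)}$ (the latter by Korn's inequality applied to the $\mu_4$-term), and for $\partial_t\fd v$ and $\partial_t\fd d$; the quantity $\esssup_t\F_\delta(\fd d(t))\le C$ is preserved since, along a subsequence, $\fn d(t)\to\fd d(t)$ strongly in $\Hc$ for a.e.~$t$ and $\F_\delta$ is continuous on $\Hc$. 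It then remains to recover $\|\fd d\|_{L^\infty(\Hb)}$ and $\sup_t\int_\Omega|\fd d(t)|^2|\nabla\fd d(t)|^2$ from $\esssup_t\F_\delta(\fd d(t))$, which is exactly the content of the coercivity estimates of Proposition~\ref{coerc1}: since all $k_i>0$ in~\eqref{frei} one has $2F(\f d,\nabla\f d)\ge\min\{k_1,k_2\}\bigl((\di\f d)^2+|\curl\f d|^2\bigr)$ and an analogous lower bound for the $|\f d|^2|\nabla\f d|^2$-weighted part, while the boundary terms appearing in Proposition~\ref{coerc1} equal the trace $\f d_1$ and are therefore controlled by $c\|\f d_1\|_{\Hrand{7}}$.

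I expect the only delicate point to be the $\delta$-uniformity itself, which is at stake in two places. First, in the energy balance the cross term $\bigl((\mu_2+\mu_3)-\lambda\bigr)(\f q_\delta,\syd v\fd d)$ must be absorbed into the dissipation $(\mu_5+\mu_6-\lambda(\mu_2+\mu_3))\|\syd v\fd d\|_{L^2(\Le)}^2+\|\f q_\delta\|_{L^2(\Le)}^2$ with strictly positive leftover coefficients; this is the mechanism already used in Proposition~\ref{prop:apri} and relies on the algebraic condition~\eqref{con}, whose constants do not involve $\delta$. Second, the $\delta$-dependent bound~\eqref{apri2reg} of Proposition~\ref{prop:apri2} must be avoided entirely: no $\delta$-uniform $L^\infty(\Hc)$-control of $\fd d$ is available, so in the estimate for $\partial_t\fd v$ the Ericksen-stress contribution (handled via~\eqref{Erikseniden}) and the quadratic-in-$\fd d$ Leslie terms must be bounded using only $\nabla\fd d\in L^\infty(\Le)$ and $\fd d\in L^\infty(\Hb)\hookrightarrow L^\infty(\f L^6)$, which forces the velocity test functions to lie in $\f L^\infty$ and is the reason the dual space $(\f H^2\cap\V)^*$ — rather than $(\V)^*$ — shows up for $\partial_t\fd v$.
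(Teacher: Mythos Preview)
Your approach is essentially the one the paper takes: pass the Galerkin-level estimates~\eqref{entrodiss} and~\eqref{esttime} to the limit $n\to\infty$ by weak lower semicontinuity, after checking that their right-hand sides are bounded uniformly in $\delta$ (the key point being $|\f d_0|=1$ a.e., so the penalization term in $\F_\delta(\f d_0)$ vanishes). Your use of Proposition~\ref{coerc1} to extract the $\|\fd d\|_{L^\infty(\Hb)}$ and $\sup_t\int_\Omega|\fd d|^2|\nabla\fd d|^2$ bounds from the $\delta$-uniform control of $\F_\delta(\fd d(t))$ is also what the paper intends, and your discussion of the cross-term absorption via~\eqref{con} and of why $(\f H^2\cap\V)^*$ appears for $\t\fd v$ is correct.

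There is one genuine omission. Proposition~\ref{prop:time} only gives $\t\fn d\in L^2(0,T;\Hd)$, because the projection $R_n$ forces the test function into a Hilbert-space setting; passing this to the limit yields $\t\fd d\in L^2(0,T;\Hd)$, which is strictly weaker than the claimed $L^2(0,T;\f L^{\nicefrac{3}{2}})$. The paper closes this gap by noting that once one is at the limit level, equation~\eqref{eq:dir} holds for \emph{all} test functions $\f\psi$, so the projection $R_n$ disappears from the duality argument in~\eqref{dntabs}; one can then test directly with $\f\psi\in\f L^3$ and read off the $\f L^{\nicefrac{3}{2}}$ bound from the same four terms estimated there. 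You should add this step.

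A smaller remark: the identity~\eqref{Erikseniden} is applied only after the limit $n\to\infty$ (it fails for the Galerkin approximants), so strictly speaking the $\t\fn v$ estimate at the discrete level comes from the form $(\nabla\fn d^T\fn q,\f w)$ in~\eqref{vdis}, not from rewriting the regularized Ericksen stress. This does not affect your conclusion, since that term is already controlled by $\|\nabla\fn d\|_{L^\infty(\Le)}\|\fn q\|_{L^2(\Le)}\|\f w\|_{\f L^\infty}$ uniformly in $\delta$.
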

\begin{proof}
This assertion is obvious by the a priori estimates~\eqref{entrodiss}, \eqref{esttime} and the weakly lower semi-continuity of the appearing norms.
Remark that the right-hand side of~\eqref{entrodiss} is bounded independently of $\varepsilon$, since $\f d_0$ is a unit vector a.e.~in $\Omega$ and $R_n\f d_0 $ converges strongly to $\f d_0$ in $\Hc$. 

 In regard of the time derivative of the director, we observe that the equation~\eqref{eq:dir} holds for all test functions. To estimate the time derivative, the projection $R_n$ and thus the restriction onto a Hilbert space as in Proposition~\ref{prop:time} is no longer needed. With the same argumentation as in Proposition~\ref{prop:time}, we get the asserted $L^2(\f L^{\nicefrac{3}{2}})$ bound.
\end{proof}

\subsection{Convergence of the solutions to the regularized systems}
The energy estimates of the previous corollary allow us to deduce the convergence of a subsequence of the solutions to the regularized system.
\begin{proposition}
Out of the family of solutions $(\fd v, \fd d)$ to the regularized systems~\eqref{weak}, we can extract a (not relabled) subsequence such that 
\begin{subequations}\label{wkonvreg}
\begin{align}
   \fd v &\stackrel{*}{\rightharpoonup}  \f v &\quad& \text{ in } L^{\infty} (0,T;\Ha)\,,\label{wr:vstern}\\
 \fd v &\rightharpoonup  \f v &\quad& \text{ in }  L^{2} (0,T;\V)\,,\label{wr:v}\\
\fd q &\rightharpoonup  \ov{\f q} &\quad& \text{ in }  L^{2} (0,T;\Le)\,,\label{wr:E}\\
\syd v \fd d &\rightharpoonup   \sy v \f d &\quad& \text{ in }  L^{2} (0,T;\Le)\,,\label{wr:Dd}\\
\fd d\cdot \syd v \fd d &\rightharpoonup  \f d \cdot \sy v \f d  &\quad& \text{ in }  L^{2} (0,T;L^2)\,,\label{wr:dDd}\\
\fd e &\rightharpoonup  {\f e} &\quad& \text{ in }  L^{2} (0,T;\Le)\,,\label{wr:e}\\
\t \fd v  &\rightharpoonup \t \f v  &\quad& \text{ in } L^2(0,T; (\Hc\cap \f H^1_{0,\sigma})^*)\, ,\label{rtimev}\\
\t\fd d &\rightharpoonup \t \f d &\quad& \text{ in } L^{2}(0,T; \f L^{\nicefrac{3}{2}}) \, ,\label{rtimed}
\\
\fd d &\stackrel{*}{\rightharpoonup}  \f d &\quad& \text{ in } L^{\infty} (0,T;\He)\,. \label{wr:ddstern}\\
\fd v &\ra \f v &\quad& \text{ in }L^p (0,T; \Ha)\text{ for any } p \in [1,\infty)\, ,\label{sr:v}\\
\fd d &\ra \f d &\quad& \text{ in } L^q ( 0,T; \f L^r )\text{ for any } q
 \in [1,\infty)\, ,  r \in [1,12)\, ,\label{sr:d}
\end{align}
\end{subequations}
for $\delta\ra 0$.
\end{proposition}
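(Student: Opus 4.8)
The plan is to obtain the Proposition by a standard compactness argument built entirely on the $\delta$‑uniform a priori estimate~\eqref{apri3} and the identity~\eqref{gleiche}.

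First I would read off from~\eqref{apri3} the following $\delta$‑independent bounds: $\{\fd v\}$ is bounded in $L^\infty(0,T;\Ha)\cap L^2(0,T;\V)$; $\{\fd d\}$ is bounded in $L^\infty(0,T;\He)$ with $\sup_\delta\||\fd d|\,|\nabla\fd d|\|_{L^\infty(\Le)}<\infty$; $\{\fd q\}$, $\{\syd v\fd d\}$, $\{\fd d\cdot\syd v\fd d\}$ and, via~\eqref{gleiche}, $\{\fd e\}$ are bounded in $L^2(0,T;\Le)$; and $\{\t\fd v\}$, $\{\t\fd d\}$ are bounded in $L^2((\f H^2\cap\V)^*)$ and $L^2(\f L^{\nicefrac{3}{2}})$, respectively. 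A first observation that is useful for the strong convergence of the director is that the bound on $|\fd d|\,|\nabla\fd d|$ upgrades the spatial integrability of $\fd d$: since $|\nabla|\fd d|^2|\le 2|\fd d|\,|\nabla\fd d|$ a.e., one has $|\fd d|^2\in L^\infty(0,T;H^1(\Omega))\hookrightarrow L^\infty(0,T;L^6(\Omega))$, i.e.\ $\{\fd d\}$ is bounded in $L^\infty(0,T;\f L^{12})$.

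Next, by the Banach--Alaoglu theorem (using separability/reflexivity so that the relevant $L^\infty(0,T;X)$ spaces are sequential duals) I would extract a single subsequence along which all the asserted weak and weak‑$*$ convergences~\eqref{wr:vstern}--\eqref{wr:e}, \eqref{rtimev}, \eqref{rtimed} and \eqref{wr:ddstern} hold; the limits of $\t\fd v$ and $\t\fd d$ are the distributional time derivatives of the weak limits $\f v$, $\f d$ because time differentiation commutes with weak limits, and the name $\f d$ is unambiguous since all limits of one sequence agree as distributions. For the strong convergences I would invoke the Lions--Aubin compactness lemma~\cite[Th\'eor\`eme~1.5.2]{lions}: with $\V\hookrightarrow\hookrightarrow\Ha\hookrightarrow(\f H^2\cap\V)^*$ and the bounds on $\fd v$ and $\t\fd v$, one gets $\fd v\to\f v$ in $L^2(0,T;\Ha)$, which interpolated with the $L^\infty(0,T;\Ha)$ bound gives~\eqref{sr:v}; with $\He\hookrightarrow\hookrightarrow\f L^{r_0}\hookrightarrow\f L^{\nicefrac{3}{2}}$ for a fixed $r_0\in(\nicefrac{3}{2},6)$ and the bounds on $\fd d$ and $\t\fd d$, one gets $\fd d\to\f d$ in $L^2(0,T;\f L^{r_0})$. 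Then for $r\in[1,12)$ and $q\in[1,\infty)$, I would interpolate the pointwise‑in‑$t$ estimate $\|\fd d(t)-\f d(t)\|_{\f L^r}\le\|\fd d(t)-\f d(t)\|_{\f L^{r_0}}^{\theta}\|\fd d(t)-\f d(t)\|_{\f L^{12}}^{1-\theta}$ against the uniform $L^\infty(0,T;\f L^{12})$ bound and apply dominated convergence (after passing to a further subsequence with $\|\fd d(t)-\f d(t)\|_{\f L^{r_0}}\to0$ for a.e.\ $t$) to obtain~\eqref{sr:d}.

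Finally, the nonlinear limits \eqref{wr:Dd}, \eqref{wr:dDd}, \eqref{wr:e} are identified by the weak‑times‑strong principle. Testing $\syd v\fd d$ against a smooth $\f\varphi$ and writing $\syd v\fd d-\sy v\f d=\syd v(\fd d-\f d)+(\syd v-\sy v)\f d$, the first term tends to $0$ since $\syd v\rightharpoonup\sy v$ in $L^2(\Le)$ while $(\fd d-\f d)\f\varphi\to0$ in $L^2(\Le)$ (by~\eqref{sr:d}), and the second term tends to $0$ since $\f d\f\varphi\in L^2(\Le)$; as $\{\syd v\fd d\}$ is bounded in $L^2(\Le)$, its already‑extracted weak limit must therefore be $\sy v\f d$. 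The same argument applied to $\syd v:(\fd d\otimes\fd d)\f\varphi$, using $\fd d\otimes\fd d\to\f d\otimes\f d$ in $L^2(\Le)$ (a consequence of~\eqref{sr:d} and the $L^\infty(\f L^{12})$ bound), gives~\eqref{wr:dDd}, and~\eqref{wr:e} follows likewise or directly from~\eqref{gleiche} together with \eqref{rtimed}, \eqref{wr:Dd} and \eqref{wr:E}. I expect the only mildly delicate points to be the $L^\infty(\f L^{12})$ upgrade together with the Lebesgue‑interpolation step, which is what makes the full range $r\in[1,12)$ in~\eqref{sr:d} attainable, and the need, because of the poor time regularity $\t\fd d\in L^2(\f L^{\nicefrac{3}{2}})$, to choose the Lions--Aubin target space as $\f L^{\nicefrac{3}{2}}$ and to identify the products against smooth (hence $L^\infty(Q)$) test functions before passing to the limit.
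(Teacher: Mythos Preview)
Your proposal is correct and follows essentially the same route as the paper's own proof: the $\delta$-uniform bounds from~\eqref{apri3} together with Banach--Alaoglu give all the weak and weak-$*$ convergences, Lions--Aubin yields the strong convergences, the $L^\infty(0,T;\f L^{12})$ upgrade via $|\fd d|^2\in L^\infty(0,T;H^1)$ is exactly the step the paper uses to reach the range $r\in[1,12)$ by interpolation, and the identification of the limits in~\eqref{wr:Dd}, \eqref{wr:dDd}, \eqref{wr:e} follows from the weak-times-strong pairing using~\eqref{sr:v}, \eqref{sr:d}. Your write-up is in fact somewhat more explicit than the paper's (which largely refers back to Proposition~\ref{lem:limits}); the only cosmetic difference is that you pass to an a.e.-in-$t$ subsequence before the interpolation step, whereas the paper phrases this as a ``standard interpolation argument'' using the $L^\infty(0,T;\f L^{12})$ bound directly.
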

\begin{proof}
This assertion is similar to the one of Proposition~\ref{lem:limits} and thus, the proof is also similar.
The existence of the weakly and weakly$^*$ converging subsequences follows from the estimate~\eqref{apri3} and the Banach--Alaoglu theorem as well as the definition of the weak derivative. 
The term $\fd e$, defined in~\eqref{edel}, is bounded due to equation~\eqref{eq:dir} and a priori estimate~\eqref{apri3},
\begin{align*}
\|\fd e \|_{L^2(\Le)} = \| \t \fd d +( \fd v\cdot \nabla ) \fd v - \skd v \fd d \|_{L^2(\Le)} \leq | \lambda | \| \syd v \fd d \| _{ L^2( \Le)} \| \fd q \|_{L^2( \Le)}\, . 
\end{align*}
The weak convergence of this term to some $\ov{\f e} \in L^2(0,T;\Le) $ can again be deduced  by standard arguments. 
For $\fd v$, we make the same observations as in Proposition~\ref{lem:limits} resulting in the strong convergence~\eqref{sr:v}.
For $\fd d$, we have less regularity than before. We note that $\He$ is compactly embedded in $\f L^r$ for $r<6$, which implies strong convergence in $L^q(0,T;\f L^r)$ for any $q\in[1,\infty)$ and $r\in [1,6)$.
Due to the boundedness in $\f L^{12}$, i.\,e.
\begin{align*}
\| \fd d\|_{L^\infty(\f L^{12})}^2 \leq \left  \| | \fd d |^2\right \|_{L^\infty(\f L^6)} \leq \left \| \nabla | \fd d |^2 \right \| _{L^\infty(\Le)} + \left \| | \fd   d | ^2 \right \| _{L^\infty(\Le)} \leq \left  \| \nabla \fd d | \fd d |\right \|_{L^\infty(\Le)} + \| \fd d \|_{L^\infty(\He)}^2\,,
\end{align*}
the strong convergence~\eqref{sr:d} holds due to a standard interpolation argument.
The limits in \eqref{wr:Dd},~\eqref{wr:dDd}, and~\eqref{wr:e} can be identified immediately due to the strong convergences~\eqref{sr:v} and~\eqref{sr:d}. 
\end{proof}


Let $(\f v _{\delta_k}, \f d_{\delta_k})$ be a sequence of solutions  to the regularized system~\eqref{weak} for vanishing regularization, i.e.~$ \delta_k\ra 0$ for $k\ra \infty$.
Then we can identify the sequence of gradients of the directors $\nabla \f d_{\delta_k}(\f x, t) $ with an $(\f x,t)$ dependent family of probability measures~$ \f\delta_{\nabla \f d_{\delta_k}(\f x, t) } $ on the space of gradients of vector valued functions. Here the $\f \delta$ characterizes a point measure.
Instead of studying the weak limits of the functions $\nabla \f d_{\delta_k}(\f x, t) $, we can study the weak$^*$ limit of the probability distributions $\f \delta_{\nabla \f d_{\delta_k}(\f x, t)}$. The right sense for this turns out to be the generalized gradient Young measures introduced in Section~\ref{sec:young}.

Since we want to go to the limit of the equation~\eqref{dir}, we have to take every term of 
equation~\eqref{eq:dir} in the cross product with the director.
Therefore, we are interested in the limit of the term $\fd d \times \fd q$.

\begin{proposition}\label{prop:qtilde}
The limit of $\{ \fd d \times \fd q\} $ is given by $\f d \times \f q$, where $\f d \times \f q$ can be expressed for every test function $\f \psi \in \C^\infty_c(\Omega \times (0,T))$ via
\begin{align*}
\int_0^T(\f d(t) \times \f q (t), \f \psi(t))\de t ={}& \int_0^T
\ll{\nu_t, \left (\f \Upsilon :\left (\f  S    (F_{\f S}(\f h, \f S))^T\right )+ \f h \times F_{\f h}(\f h ,\f S)\right ) \cdot\f \psi(t)   }
 \de t \\
&+ \int_0^T \left (
\rot{\f d(t)}  F_{\f S}( \f d(t) , \nabla \f d(t) ); \nabla \f \psi(t) 
\right ) \de t \, . 
\end{align*}
\end{proposition}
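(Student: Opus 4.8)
The plan is to split $\fd d\times\fd q$ according to the decomposition of $\fd q$ in~\eqref{qtilde}, discard the penalisation and the biharmonic regularisation as vanishing error terms, rewrite the genuine Oseen--Frank part in a form in which every second gradient of $\fd d$ sits inside a divergence, and pass to the limit in the remaining nonlinearities via Theorem~\ref{thm:young}. Throughout I argue along a subsequence $\delta\to0$ along which $\{\fd d\}$ generates the generalised gradient Young measure $(\nu^o,m,\nu^\infty)$ and the convergences~\eqref{wr:E}--\eqref{sr:d} hold; then $\fd d\times\fd q$ converges in the sense of distributions on $Q$ (product of the weak limit~\eqref{wr:E} with the strong limit~\eqref{sr:d}), and one calls its limit $\f d\times\f q$ and identifies it by testing against $\f\psi\in\C_c^\infty(\Omega\times(0,T))$. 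First, the penalisation term drops out pointwise, $\fd d\times\tfrac1\varepsilon(|\fd d|^2-1)\fd d=\tfrac1\varepsilon(|\fd d|^2-1)(\fd d\times\fd d)=0$. For the biharmonic term I would use the elementary identity
\[
\fd d\times\Lapp\fd d=\di\big(\fd d\times\nabla\Lap\fd d\big)-\di\big(\nabla\fd d\times\Lap\fd d\big),
\]
which follows from the product rule together with $\Lap\fd d\times\Lap\fd d=0$ and $\partial_m\fd d\times\partial_m\fd d=0$ (sum over $m$), where $(\fd d\times\nabla\Lap\fd d)_{im}=(\fd d\times\partial_m\Lap\fd d)_i$ and similarly for the second matrix. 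Testing $\delta(\fd d\times\Lapp\fd d)$ against $\f\psi$ and integrating by parts once more in the first divergence gives
\[
\delta\int_0^T(\fd d\times\Lapp\fd d,\f\psi)\,\de t=2\delta\int_0^T(\nabla\fd d\times\Lap\fd d;\nabla\f\psi)\,\de t+\delta\int_0^T(\fd d\times\Lap\fd d,\Lap\f\psi)\,\de t,
\]
and since $\|\nabla\fd d\|_{L^\infty(\Le)}$ and $\|\fd d\|_{L^\infty(\Le)}$ are bounded and $\delta^{1/2}\|\Lap\fd d\|_{L^\infty(\Le)}\le c$ by~\eqref{apri3}, both terms on the right are $O(\delta^{1/2})$ and hence vanish as $\delta\to0$.

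For the Oseen--Frank part $\fd d\times\fd q=\fd d\times F_{\f h}(\fd d,\nabla\fd d)-\fd d\times\di F_{\f S}(\fd d,\nabla\fd d)+(\text{vanishing terms})$ the crucial step is the product identity
\[
\di\big(\rot{\fd d}F_{\f S}(\fd d,\nabla\fd d)\big)=\f\Upsilon:\!\big(\nabla\fd d\,(F_{\f S}(\fd d,\nabla\fd d))^T\big)+\fd d\times\di F_{\f S}(\fd d,\nabla\fd d),
\]
which one obtains from the pointwise relation $\rot{\fd d}\f a=\fd d\times\f a$, the product rule, and the definitions of the tensor contractions from Section~\ref{sec:not}. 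This moves every occurrence of $\nabla^2\fd d$ into the divergence $\di(\rot{\fd d}F_{\f S})$ and leaves only quantities depending on $(\fd d,\nabla\fd d)$. Consequently, for every admissible $\f\psi$,
\begin{align*}
\int_0^T(\fd d\times\fd q,\f\psi)\,\de t={}&\int_0^T\big(\fd d\times F_{\f h}(\fd d,\nabla\fd d),\f\psi\big)\,\de t+\int_0^T\big(\rot{\fd d}F_{\f S}(\fd d,\nabla\fd d);\nabla\f\psi\big)\,\de t\\
&+\int_0^T\big(\f\Upsilon:(\nabla\fd d\,(F_{\f S}(\fd d,\nabla\fd d))^T),\f\psi\big)\,\de t+o(1)\,.
\end{align*}

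It remains to pass to the limit in the three integrals. In the second one $\f S\mapsto F_{\f S}(\f h,\f S)$ is linear by~\eqref{FSFh}, so $\rot{\fd d}F_{\f S}(\fd d,\nabla\fd d)$ is linear in $\nabla\fd d$ with coefficients that are polynomials of degree at most three in $\fd d$; by~\eqref{sr:d} together with the uniform bound $\|\fd d\|_{L^\infty(\f L^{12})}\le c$ these coefficients (and hence their products with $\nabla\f\psi$) converge strongly in $L^2(0,T;\f L^r)$ for every $r<4$, while $\nabla\fd d\rightharpoonup\nabla\f d$ in $L^2(0,T;\Le)$ by~\eqref{wr:ddstern}, so the second integral tends to $\int_0^T(\rot{\f d}F_{\f S}(\f d,\nabla\f d);\nabla\f\psi)\,\de t$. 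For the first and third integrals I would invoke Theorem~\ref{thm:young}: both maps $(\f h,\f S)\mapsto(\f h\times F_{\f h}(\f h,\f S))\cdot\f\psi(\f y)$ and $(\f h,\f S)\mapsto(\f\Upsilon:(\f S\,(F_{\f S}(\f h,\f S))^T))\cdot\f\psi(\f y)$ belong to $\mathcal{R}$. Indeed, by~\eqref{FSFh} the first is homogeneous of degree two both in $\f h$ and in $\f S$ (the $\f h$-linear piece $k_3\tr(\f S)^2\f h$ of $F_{\f h}$ being annihilated by $\f h\times\f h=0$), hence by Remark~\ref{rem:trs} its transform equals itself; the second splits into a piece of that same bi-homogeneity plus the piece $\f\Upsilon:(\f S\,(k_1\tr(\f S)I+k_2(\f S)_{\skw})^T)$ arising from the $\f h$-independent, $\f S$-linear part of $F_{\f S}$ (the tensor $\f\Lambda$, cf.~\eqref{Lambda}), which is quadratic in $\f S$, independent of $\f h$, and whose transform equals $(1-|\tilde{\f h}|^2)$ times the same polynomial in $\tilde{\f S}$; in both cases the transform extends continuously to $\ov{B}_d\times\ov{B}_{d\times d}$. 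Theorem~\ref{thm:young} therefore identifies the limits of the first and third integrals as $\int_0^T\ll{\nu_t,(\f h\times F_{\f h}(\f h,\f S))\cdot\f\psi(t)}\,\de t$ and $\int_0^T\ll{\nu_t,(\f\Upsilon:(\f S\,(F_{\f S}(\f h,\f S))^T))\cdot\f\psi(t)}\,\de t$, respectively (the concentration part being taken with the transformed integrands as prescribed in Definition~\ref{def:meas}); collecting the three contributions yields the asserted representation.

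The main obstacle is the verification that the quadratic-in-gradient, $\f h$-independent part of $\f\Upsilon:(\f S\,(F_{\f S})^T)$ still lies in $\mathcal{R}$ --- this is exactly what allows Theorem~\ref{thm:young} to be applied to it, and is the reason that no separate defect measure is needed in the director equation. A secondary, purely computational difficulty is the index bookkeeping in the two product-rule identities involving $\f\Upsilon$ and $\rot{\cdot}$ and the signs produced by the integrations by parts; by contrast, the vanishing of the biharmonic contribution is immediate once its divergence form above has been used.
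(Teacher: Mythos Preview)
Your argument is correct and follows essentially the same route as the paper: the penalisation term is killed by $\fd d\times\fd d=0$; the biharmonic contribution is rewritten so that two integrations by parts leave only pairings of $\Lap\fd d$ with first derivatives of $\fd d$ or of $\f\psi$, which are $O(\sqrt\delta)$ by the bound $\delta\|\Lap\fd d\|_{L^\infty(\Le)}^2\le c$; the Oseen--Frank part is rearranged via the product rule $\di(\rot{\fd d}F_{\f S})=\f\Upsilon:(\nabla\fd d\,(F_{\f S})^T)+\fd d\times\di F_{\f S}$, after which the term linear in $\nabla\fd d$ passes to the limit by weak--strong pairing and the two quadratic-in-gradient terms by Theorem~\ref{thm:young}. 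Your verification that the relevant integrands lie in $\mathcal R$ (in particular the observation that the $\f h$-independent, $\f S$-quadratic piece of $\f\Upsilon:(\f S(F_{\f S})^T)$ transforms to $(1-|\tilde{\f h}|^2)$ times itself and hence extends continuously) is more explicit than the paper's treatment, which simply appeals to Theorem~\ref{thm:young} at that point.
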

\begin{proof}
We already established the weak convergence~\eqref{wr:E}. It remains to identify the limit of $\fd d \times \fd q$.
First we observe that $\fd d \times ( | \fd d |^2 -1 ) \fd d = 0$ and the term due to the penalization, the last term in~\eqref{qtilde}, vanishes. 

Recalling the definition of $\fd q$ (see~\eqref{qtilde}), we find with an integration by parts for every $\f \psi \in \C^\infty_c(\Omega\times (0,T))$
\begin{align*}
\intte{\langle \rot{\fd d(t)}  \fd q (t), \f \psi(t) \rangle } ={}&  \delta  \intte{( \fd d(t)\times  \Lapp \fd d(t) ,  \f \psi(t) ) }
\\& + \intte{\left \langle \fd d (t) \times F_{\f h } ( \fd d(t), \nabla \fd d (t)) - \fd d (t) \times \di F_{\f S}( \fd d (t) , \nabla \fd d (t)) , \f \psi (t)  \right \rangle  }
\\
 ={}& 
 \delta  \intte{(  \Lap \fd d (t)\times \Lap \fd d(t) ,   \f \psi(t) )} + 2\delta \intte{
\left (  \Lap \fd d(t) , \nabla \rot{\fd d(t)}^T : \nabla \f \psi(t)   \right )
 }
\\&
+ \delta \intte{\left ( \fd d (t) \times  \Lap \fd d(t) , \Lap \f \psi(t) \right ) 
}
+ \intte{\left (  \fd d(t) \times F_{\f h } ( \fd d(t), \nabla \fd d (t)), \f \psi (t)  \right )}\\&
+ \intte{\left ( \f \Upsilon :\left (\nabla \fd d(t) \cdot   (F_{\f S}(\fd d(t), \nabla \fd d(t) )\right )^T ,\f \psi (t)  \right )} \\& +\intte{\left ( \rot{\f d(t)}  F_{\f S}( \fd d (t) , \nabla \fd d (t)) ; \nabla  \f \psi (t)  \right )  }\\
={}&  J_{1,\delta}+J_{2,\delta} + J_{3,\delta} + J_{4,\delta}+J_{5,\delta}+J_{6,\delta}\, .
\end{align*}
The first term vanishes, since it incorporates the cross product of two equal terms. The second and the third term can be estimated by
\begin{align*}
J_{2,\delta}   \leq \sqrt{\delta} c \left (\sqrt{\delta} \| \Lap \fd d \|_{L^\infty(\Le)} \| \nabla \fd d \|_{L^\infty(\Le)} \| \nabla \f \psi \|_{L^2(\f L^\infty)}\right )
\intertext{and}
J_{3,\delta} \leq  \sqrt{\delta} c \left (  \sqrt{\delta} \| \Lap \fd d \|_{L^\infty(\Le)} \| \fd d \|_{L^\infty(\f L^6)}  \| \Lap \f \psi \|_{L^2(\f L^3)}\right )\,,
\end{align*}
respectively. Remark that $ \delta \| \Lap \fd d \|_{L^\infty(\Le)}^2$ is bounded. 
The terms thus converge to zero for $\delta\ra 0$. 
The terms $J_{4,\delta}$ and $J_{5,\delta}$ converge in regard of Theorem~\ref{thm:young}
\begin{multline*}
 \intte{\left (\left \langle  \fd d(t) \times F_{\f h } ( \fd d(t), \nabla \fd d (t)), \f \psi (t)  \right \rangle+ \left \langle \f \Upsilon :\left (\nabla \fd d(t) \cdot   (F_{\f S}(\fd d(t), \nabla \fd d(t) )\right )^T ,\f \psi (t)  \right \rangle \right ) } 
\\
\longrightarrow
\int_0^T \left (\ll{\nu_t, \f \Upsilon :\left (\f  S \cdot   (F_{\f S}(\f h, \f S))^T\right ) \cdot\f \psi(t)   }+ 
 \ll{ \nu_t , \f h \times F_{\f h}(\f h, \f S) \cdot \f \psi(t) } \right ) \de t\,.
\end{multline*}

Finally, the term $J_{6,\delta}$ converges weakly due to~\eqref{wr:ddstern} and~\eqref{sr:d} and since the gradient of the director occurs only linearly (see definition~\eqref{FSFh}),
\begin{align*}
\intte{\left ( \rot{\fd d(t)}  F_{\f S}( \fd d (t) , \nabla \fd d (t)) ; \nabla  \f \psi (t) \right ) } \ra \intte{\left (  \rot{\f d (t )}  F_{\f S}( \f d (t) , \nabla \f d (t)) ; \nabla  \f \psi (t) \right ) }\, .
\end{align*}

\end{proof}
\begin{proposition}\label{prop:Erik}
The Ericksen stress $\f T^E_\delta $ converges in the following sense:
\begin{align*}
\int_0^T \left ( \f T^E _{\delta }(t) ; \nabla\f  \varphi(t)\right ) \de t \longrightarrow \int_0^T \left (2 \ll{\mu_t , \f \Gamma  \dreidots (\f \Gamma \cdot \nabla\f  \varphi(t))   } + \ll{ \nu_t , \f S ^T F_{\f S}( \f h , \f S) : \nabla\f  \varphi(t)  } \right ) \de t \,
\end{align*}
for $\delta\ra 0$ and  for all $\f \varphi \in \C_c^\infty( \Omega \times (0,T))$ with $\di \f \varphi=0$. 
\end{proposition}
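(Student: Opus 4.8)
The plan is to decompose $\f T^E_\delta$ according to~\eqref{Erikreg} into the Oseen--Frank part $\f T^E=\nabla\fd d^T F_{\f S}(\fd d,\nabla\fd d)$ and the two regularization corrections $\delta\,\Lap\fd d\cdot\nabla^2\fd d$ and $-\delta\,\nabla\fd d^T\nabla\Lap\fd d$, and to pass to the limit in each of the three contributions separately, using Theorem~\ref{thm:young} for the first and Theorem~\ref{thm:defectmeas} for the combination of the last two. First I would pass to a subsequence $\delta_k\to0$ along which both the generalized gradient Young measure $(\nu^o,m,\nu^\infty)$ and the defect measure $(\mu,\nu^\mu)$ exist; this is legitimate since $\{\fd d\}$ satisfies the hypotheses of both theorems in view of the uniform bounds~\eqref{apri3} — in particular $\sup_\delta\|\nabla\fd d\,|\fd d|\|_{L^\infty(\Le)}<\infty$ and $\sup_\delta\sqrt\delta\,\|\Lap\fd d\|_{L^\infty(\Le)}<\infty$, hence $\sup_\delta\sqrt\delta\,\|\nabla^2\fd d\|_{L^\infty(\Le)}<\infty$ — together with the strong convergence $\fd d\to\f d$ from~\eqref{sr:d}.

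For the Oseen--Frank contribution, the algebraic identity $\f A^T\f B:\f C=\f B:\f A\f C$ gives $(\f T^E(t);\nabla\f\varphi(t))=\int_\Omega\big(\f S^T F_{\f S}(\f h,\f S):\nabla\f\varphi(\f x,t)\big)\big|_{(\f h,\f S)=(\fd d,\nabla\fd d)}\,\de\f x$, so $\int_0^T(\f T^E(t);\nabla\f\varphi(t))\,\de t=\int_Q f(\f y,\fd d(\f y),\nabla\fd d(\f y))\,\de\f y$ with $f(\f y,\f h,\f S):=\f S^T F_{\f S}(\f h,\f S):\nabla\f\varphi(\f y)$. From the explicit form~\eqref{FSFh} of $F_{\f S}$ and Remark~\ref{rem:trs}, the transform~\eqref{transi} of this $f$ admits a continuous extension onto $\ov Q\times\ov B_d\times\ov B_{d\times d}$, i.e.~$f\in\mathcal{R}$; hence Theorem~\ref{thm:young} applies directly and yields $\int_Q f(\f y,\fd d,\nabla\fd d)\,\de\f y\to\int_0^T\ll{\nu_t,\f S^T F_{\f S}(\f h,\f S):\nabla\f\varphi(t)}\,\de t$, which is the second term on the right-hand side of the assertion.

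For the regularization corrections I would integrate by parts once in space in $-\delta(\nabla\fd d^T\nabla\Lap\fd d;\nabla\f\varphi)$, moving the extra derivative off $\nabla\Lap\fd d$; using $\partial_j\big(\partial_i\fd d_k\,\partial_j\f\varphi_i\big)=\partial_j\partial_i\fd d_k\,\partial_j\f\varphi_i+\partial_i\fd d_k\,\Lap\f\varphi_i$ this produces $\delta(\Lap\fd d\cdot\nabla^2\fd d;\nabla\f\varphi)$ (which, added to the other correction, yields the factor two) plus a remainder $\delta((\nabla\fd d)^T\Lap\fd d,\Lap\f\varphi)$. Thus $(\f T^E_\delta;\nabla\f\varphi)=(\f T^E;\nabla\f\varphi)+2\delta(\Lap\fd d\cdot\nabla^2\fd d;\nabla\f\varphi)+\delta((\nabla\fd d)^T\Lap\fd d,\Lap\f\varphi)$, and the last term is bounded by $c\big(\sqrt\delta\|\Lap\fd d\|_{L^\infty(\Le)}\big)\|\nabla\fd d\|_{L^\infty(\Le)}\sqrt\delta\,\|\Lap\f\varphi\|_{L^1(\f L^\infty)}=O(\sqrt\delta)\to0$. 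In the surviving term I would write $\nabla^2\fd d=|\nabla^2\fd d|\,\f\Gamma_\delta$ with $\f\Gamma_\delta:=\nabla^2\fd d/|\nabla^2\fd d|\in\Se^{d^3-1}$ (and $\Lap\fd d$ accordingly), so that $2\delta(\Lap\fd d\cdot\nabla^2\fd d;\nabla\f\varphi)=2\int_0^T\int_\Omega\delta|\nabla^2\fd d|^2\,\Phi(\f x,t,\f\Gamma_\delta)\,\de\f x\,\de t$ with $\Phi(\f x,t,\cdot)\in\C(\Se^{d^3-1})$ depending linearly on $\nabla\f\varphi(\f x,t)$; a careful accounting of the index contractions from Section~\ref{sec:not}, exploiting the symmetry of $\nabla^2\fd d$ in its two derivative slots, identifies $\Phi(\f x,t,\f\Gamma)$ with $\f\Gamma\dreidots\big(\f\Gamma\cdot\nabla\f\varphi(\f x,t)\big)$. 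Theorem~\ref{thm:defectmeas}, applied with the jointly continuous integrand $2\Phi$, then gives $2\delta(\Lap\fd d\cdot\nabla^2\fd d;\nabla\f\varphi)\to2\int_0^T\ll{\mu_t,\f\Gamma\dreidots(\f\Gamma\cdot\nabla\f\varphi(t))}\,\de t$, the first term in the assertion, and summing the three limits proves the proposition. The hard part is this last identification — verifying that, once the integration by parts (which is available precisely because of the intrinsic boundary condition $\f\gamma_0(\Lap\fd d)=0$ from Theorem~\ref{thm:weak}) has been carried out, the two $\delta$-corrections combine into exactly $2\,\f\Gamma\dreidots(\f\Gamma\cdot\nabla\f\varphi)$ when tested against $\mu$; everything else is a routine combination of the convergences already established.
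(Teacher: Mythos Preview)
Your decomposition into the Oseen--Frank part and the two $\delta$-corrections, the first integration by parts producing the factor $2$ together with the $O(\sqrt\delta)$ remainder $\delta((\nabla\fd d)^T\Lap\fd d,\Lap\f\varphi)$, and the treatment of $\f T^E$ via Theorem~\ref{thm:young} are all correct and match the paper. The gap is in the last step: the pointwise identification of $\Phi(\f\Gamma)$ with $\f\Gamma\dreidots(\f\Gamma\cdot\nabla\f\varphi)$ is false. With the convention $\f\Gamma_{ijk}=\partial_k\partial_j(\fd d)_i$ one has $\Lap\fd d\cdot\nabla^2\fd d:\nabla\f\varphi=\sum_{i,j,k,l}\f\Gamma_{ill}\,\f\Gamma_{ijk}\,\partial_k\varphi_j$, whereas $\f\Gamma\dreidots(\f\Gamma\cdot\nabla\f\varphi)=\sum_{i,j,k,l}\f\Gamma_{ijl}\,\f\Gamma_{ijk}\,\partial_l\varphi_k$. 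The first carries a trace $\f\Gamma_{ill}$ that the second does not, and the symmetry $\f\Gamma_{ijk}=\f\Gamma_{ikj}$ cannot convert one into the other (take $(\f\Gamma_{1jk})_{j,k}$ a nonzero traceless symmetric matrix and $\f\Gamma_{2jk}=\f\Gamma_{3jk}=0$: then your $\Phi$ vanishes but $\f\Gamma\dreidots(\f\Gamma\cdot\nabla\f\varphi)$ generically does not). Applying Theorem~\ref{thm:defectmeas} to your actual $\Phi$ therefore produces a different defect term, not the one in the statement.

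The paper closes this gap with \emph{two further} integrations by parts on $\delta(\Lap\fd d\cdot\nabla^2\fd d;\nabla\f\varphi)$, obtaining
\[
\delta(\Lap\fd d\cdot\nabla^2\fd d;\nabla\f\varphi)=\delta(\nabla^2\fd d\dreidotkom\nabla^2\fd d\cdot\nabla\f\varphi)+\delta(\nabla^2\fd d:\nabla\fd d,\nabla(\di\f\varphi))-\delta(\nabla\fd d^T\cdot\nabla^2\fd d\dreidotkom\nabla^2\f\varphi)\,.
\]
The middle term vanishes because $\di\f\varphi=0$, the last is $O(\sqrt\delta)$ by the same estimate you used for your first remainder, and the first is now genuinely of the form $\delta|\nabla^2\fd d|^2\,\f\Gamma_\delta\dreidots(\f\Gamma_\delta\cdot\nabla\f\varphi)$ to which Theorem~\ref{thm:defectmeas} applies. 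The passage from the ``Laplacian'' form to the ``full Hessian'' form is an integral identity modulo vanishing errors, not a pointwise one; this is precisely the step your argument is missing.
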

\begin{proof}
Recall the definition of the Ericksen stress~\eqref{Erikreg}.  An integration by parts in the second term yields for every $\f \varphi\in L^2(0,T;\C^\infty_{0,\sigma}(\Omega) )$
\begin{align*}
\intte{ \left ( {\f T}^E_{\delta  }; \nabla \f \varphi \right ) } &= \intte{\left (\nabla \fd d ^T F_{\f S}(\fd d , \nabla \fd d )  + \delta \Lap \fd d\cdot \nabla ^2 \fd d   - \delta \nabla \fd d ^T  \nabla \Lap \fd d ; \nabla \f \varphi \right ) } \\
& = \intte{(\nabla \fd d ^T F_{\f S}(\fd d , \nabla \fd d ); \nabla \f \varphi  )  } + 2\delta \intte{( \Lap \fd d \cdot \nabla ^2 \fd d  ; \nabla \f \varphi  )  }\\& \quad  
- \delta \intte{(  \nabla \fd d ^T \Lap \fd d   ,\Lap  \f \varphi  )  }= K_{1,\delta}+ K_{2,\delta}+K_{3,\delta}\, .
\end{align*}
Regarding the term $K_{1,\delta}$, we can go to the limit due to Proposition~\ref{lem:meas},
\begin{align*}
K_{1,\delta} = \intte{( \nabla \fd d ^T F_{\f S} (\fd d , \nabla \fd d ); \nabla \f \varphi  )  } \ra  \intte{\ll{\nu_t, \f S ^T F_{\f S} ( \f h ,\f S)  :\nabla \f \varphi(t) }  }\, .
\end{align*}
For the term $K_{2,\delta}$, we get after two integrations by parts
\begin{align*}
\frac{1}{2}K_{2,\delta} ={}& \intte{( \Lap \fd d \cdot \nabla^2 \fd d   ; \nabla \f \varphi  )  } \\
={}& -\intte{ \left ( \nabla \fd  d: \nabla^3  \fd d   ; \nabla \f \varphi\right ) + \left (  \nabla \f d^T \cdot \nabla^2\fd d  \dreidotkom \nabla ^2 \f \varphi \right )  
}\\
={}& \intte{\left ( \nabla ^2 \fd d  \dreidotkom \nabla ^2  \fd d \cdot \nabla \f \varphi \right ) }+\intte{ \left ( \nabla ^2\fd d : \nabla \fd d  ,  \nabla (\di  \f \varphi) \right )  
}
- \intte{  \left (  \nabla \f d^T \cdot \nabla^2\fd d  \dreidotkom \nabla ^2 \f \varphi \right )    } 
\\
={}& L_{1,\delta} + L_{2,\delta} + L_{3,\delta} \,.
%
\end{align*}
For $L_{1,\delta}$ holds with Theorem~\ref{thm:defectmeas}
\begin{align*}
\intte{\left ( \nabla ^2 \fd d  \dreidotkom \nabla ^2 \fd d \cdot\nabla \f \varphi\right )} \ra \intte{ \ll{\mu_t , \f \Gamma\dreidots ( \f \Gamma \cdot \nabla \f \varphi (t)) } }\,.
\end{align*}

The term $L_{2,\delta}$ vanishes since $\f \varphi$ is divergence free.
Due to a priori estimate~\eqref{apri3}, the coercivity of the Laplace 
operator and the regularity of the test function, the remaining terms can be estimated by a constant times $\sqrt \delta$ and  go to zero for $\delta\ra 0$,
\begin{align*}
K_{3,\delta}+  L_{3,\delta}  \leq{}& c \delta (\| \Lap  \fd d  \|_{\Le} + \| \nabla ^2 \fd d\|_{\Le} )\| \nabla \fd d  \|_{\Le} \| \nabla^2 \f \varphi \|_{\f L^\infty}\\ \leq{}& \sqrt{\delta} (  \delta \| \Lap  \fd d  \|_{\Le}^2 + \| \f d \|_{\He}^2
)^{\nicefrac{1}{2}} \| \nabla \fd d  \|_{\Le} \| \nabla^2 \f \varphi \|_{\f L^\infty}  \ra 0 \,.
\end{align*}

\end{proof}


\begin{proof}[Proof of Theorem~\ref{thm:meas}]
It only remains to prove that the limit $(\f v, \f d)$ of Proposition~\ref{wkonvreg} satisfies the definition of a measure-valued solution of the system (see Definition~\ref{def:meas}). This is shown by passing to the limit in the regularized problem (see Definition~\ref{defi:weak}).

Let us start with the regularized director equation~\eqref{eq:dir}. We consider equation~\eqref{eq:dir} in the cross product with the director and get for the term incorporating the time derivative that it converges due to~\eqref{rtimed} and~\eqref{sr:d}. The semilinear terms 
converge weakly due to the strong convergence of $\fd v$ and $\fd d$ (see~\eqref{sr:v}, \eqref{sr:d}) and the weak convergence of its gradients (see~\eqref{wr:v}, \eqref{wr:ddstern}). Thus, ee obtain for all  $\f \psi  \in \mathcal{C}_c^\infty( \Omega\times(0,T);\R^3)$
\begin{multline*}
\intte{\left ( \fd d  \times \left ( \partial_t \fd d  + (\fd v \cdot \nabla ) \fd d  - \left (\left ( \nabla \fd v \right) _{\skw}  - \lambda  \left ( \nabla \fd v \right) _{\sym} \right ) \fd d \right ), \f \psi \right )}\\ \longrightarrow \intte{\left ( \f d \times \left ( \t \f d  + (\f v \cdot \nabla ) \f d  -  \left (\nabla  \f v \right) _{\skw}  \f d  + \lambda  \left ( \nabla\f v \right) _{\sym}  \f d \right ), \f \psi  \right )}
\, ,
\end{multline*}
where we omitted the time dependence for brevity.
We observe the convergence of the term $ \fd q$ due to~\eqref{wr:E} and Proposition~\ref{prop:qtilde}. 
Since all terms of the regularized  director equation converge, we can go to the limit in equation~\eqref{eq:dir}  and attain the measure-valued formulation~\eqref{eq:mdir}.

The next step is to go to the limit in the fluid-flow equation. We already established the convergence of the time derivative in~\eqref{rtimev}. The convection term converges due to the strong convergence of the velocity fields~\eqref{s:v} and the  weak convergence of its gradients~\eqref{w:v}, such that we have for all solenoidal $\f \varphi \in \mathcal{C}_c^\infty( \Omega\times(0,T);\R^3))$ 
\begin{equation*}
\int_0^T ((\fd v \cdot \nabla) \fd v  , \f \varphi ) \de t
\to
\int_0^T ((\f v \cdot \nabla) \f v , \f \varphi ) \de t \, .
\end{equation*}
With the strong convergence of the director~(see~\eqref{sr:d}) and the weak convergences~\eqref{wr:dDd}, \eqref{wr:v}, \eqref{wr:e}, and~\eqref{wr:Dd}, we get the convergence of the Leslie stress, i.e.
\begin{align}
\begin{split}
\intte{\left (  \mu_1 (\fd d \cdot \syd{v}\fd d )\fd d \otimes \fd d +\mu_4 \syd{v}
 + {(\mu_5+\mu_6)} \left (  \fd d \otimes\syd{v}\fd d \right )_{\sym}; \nabla \f\varphi\right ) }
\\
 +
\intte{\left (
{(\mu_2+\mu_3)} \left (\fd d \otimes \fd e  \right )_{\sym}
 +\lambda \left ( \fd d \otimes \sy{v}\fd d  \right )_{\skw} + \left (\fd d \otimes \fd e  \right )_{\skw}; \nabla \f \varphi \right )}\rightarrow\\
\intte{\left (  \mu_1 (\f d \cdot \sy{v}\f d )\f d \otimes \f d +\mu_4 \sy{v}
 + {(\mu_5+\mu_6)} \left (  \f d \otimes\sy{v}\f d \right )_{\sym}; \nabla \f \varphi\right )}
\\
 +\intte{ \left ({(\mu_2+\mu_3)} \left (\f d \otimes \f e  \right )_{\sym}
 +\lambda \left ( \f d \otimes \sy{v}\f d  \right )_{\skw} + \left (\f d \otimes \f e  \right )_{\skw}; \nabla \f \varphi \right ) }\, .\end{split}
\end{align}
The convergence of the Ericksen stress~$\f T^E$ was already established in Proposition~\eqref{prop:Erik}.
This shows that the limit~$(\f v, \f d)$ of solutions $\{(\fd v , \fd d)\}$ to the regularized system~\eqref{weak} for vanishing regularization satisfies the system~\eqref{meas}. 

The solution~$(\f v, \f d )$ already satisfies the initial values $\f v(0)= \f v_0$ and $\f d(0)=\f d_0$ due to corollary~\eqref{cor:initial}.

\end{proof}

\section{Additional properties of the measure-valued solutions~\label{sec:add}}
\subsection{Additional estimates}
This section is devoted to the proof of an additional estimate for the system, i.\,e.~an $\f L^\infty$-estimate in space for the director.  
Later on, this allows to characterize the support of the defect angle measure $\nu^\infty$ and additionally, to give a remark concerning the existence theory despite the lack of coercivity.

\begin{proposition}\label{prop:apri2an}
Let the assumptions of Theorem~\ref{thm:meas} be fulfilled with the additional assumption on the constants appearing in the Oseen--Frank energy $k:=k_1=k_2$. Let additionally be $\varepsilon = \delta^{\nicefrac{7}{3}}$. For the solutions to the approximate regularized system, we find
\begin{align*}
\left \| |\fn d |^2 -1\right \|_{L^{\nicefrac{8}{3}}(L^\infty)} + \left \| \nabla | \fn d |^2 \right \|_{L^{\nicefrac{8}{3}}(L^3)} \leq  c \delta^{\nicefrac{1}{3}}	\,.
\end{align*}
\end{proposition}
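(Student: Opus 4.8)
The plan is to make the dependence on $\delta$ explicit in every a priori estimate and then upgrade the resulting weak‑norm control of $w:=|\fn d|^2-1$ to the claimed space–time norms by Gagliardo–Nirenberg interpolation together with Hölder in time. As a first step I would collect, from the discrete energy equality \eqref{entro1}, Proposition~\ref{prop:apri} and the coercivity Proposition~\ref{coerc1}, the bounds
\[
\|w\|_{L^\infty(\Le)}^2\le c\,\varepsilon=c\,\delta^{7/3},\qquad \|\nabla w\|_{L^\infty(\Le)}\le 2\big\|\,|\fn d|\,\nabla\fn d\,\big\|_{L^\infty(\Le)}\le c,
\]
\[
\delta\,\|\Lap\fn d\|_{L^\infty(\Le)}^2\le c\ \Rightarrow\ \|\fn d\|_{L^\infty(\Hi)}\le c\,\delta^{-1/2},\quad \|\fn d\|_{L^\infty(\f L^\infty)}+\|\nabla\fn d\|_{L^\infty(\f L^6)}\le c\,\delta^{-1/2},
\]
together with $\|\fn q\|_{L^2(\Le)}\le c$ and $\|\fn d\cdot\syn{v}\fn d\|_{L^2(L^2)}\le c$. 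The crucial bound $\|w\|_{L^\infty(\Le)}^2\le c\varepsilon$ holds because $|\f d_0|=1$ makes $\F_\delta(\f d_0)$, and hence by the error estimate in \eqref{entrodiss} also $\F_\delta(R_n\f d_0)$, bounded independently of $\varepsilon$, so that $\tfrac1{4\varepsilon}\|w(t)\|_{\Le}^2\le\F_\delta(\fn d(t))\le c$; moreover $\f d_0\in\Hi\hookrightarrow\C(\ov\Omega)$ and $|\f d_0|=1$ force $|\f d_1|=1$ on $\partial\Omega$, so $w(t)\in\Hb$ and elliptic regularity on the $\C^{3,1}$ domain gives $\|w(t)\|_{\Hi}\le c\|\Lap w(t)\|_{\Le}$.

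Next I would derive an energy estimate for $w$. Taking the scalar product of the director equation \eqref{ddis} with $\fn d$, the skew term drops since $\fn d\cdot\skn{v}\fn d=0$, and $\di\fn v=0$ removes the transport term after testing. Here the hypothesis $k_1=k_2=k$ is essential: it makes the quadratic Oseen–Frank part of the variational derivative reduce to $-k\Lap\fn d$ modulo boundary terms (the $\nabla(\di\fn d)$ contributions of $k_1(\di\fn d)^2$ and $k_2|\curl\fn d|^2$ cancel), so that $\fn q=-k\Lap\fn d+\delta\Lapp\fn d+\tfrac1\varepsilon w\fn d+\f G$, with $\f G$ collecting the $k_3,k_4,k_5$ variational terms of \eqref{FSFh}, each carrying a factor $|\fn d|^2$ or $\fn d\otimes\fn d$. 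Using $\fn d\cdot\Lap\fn d=\tfrac12\Lap w-|\nabla\fn d|^2$ and, after two integrations by parts, the corresponding expansion of $\fn d\cdot\Lapp\fn d$, I would then test the resulting scalar equation for $w$ with $\tfrac1\varepsilon w$ — equivalently, test \eqref{ddis} with the ($Z_n$‑projection of the) penalization force $\tfrac1\varepsilon(|\fn d|^2-1)\fn d$. This yields on the left the dissipative quantities $\tfrac1{4\varepsilon}\tfrac{\de}{\de t}\|w\|_{\Le}^2$, $\tfrac1{\varepsilon^2}\big\|\,|\fn d|\,w\,\big\|_{\Le}^2$ (from the penalization, with the right sign since $|\fn d|^2\ge0$), $\tfrac{k}{2\varepsilon}\|\nabla w\|_{\Le}^2$ and $\tfrac{\delta}{2\varepsilon}\|\Lap w\|_{\Le}^2$; on the right it produces $-\tfrac\lambda\varepsilon\int w(\fn d\cdot\syn{v}\fn d)$, $-\tfrac k\varepsilon\int|\nabla\fn d|^2 w$, $-\tfrac1\varepsilon(\f G,w\fn d)$, and $\tfrac\delta\varepsilon$‑weighted cross terms involving $\Lap w$, $\nabla w\cdot\nabla\fn d$, $|\Lap\fn d|^2 w$ and (vanishing or data‑controlled) boundary integrals. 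Each right‑hand side term is bilinear in $w$ (or $\nabla w$) and a factor bounded by some negative power of $\delta$; I would estimate them with the bounds of Step~1, the interpolation inequalities $\|w\|_{\f L^3}\le c\|w\|_{\Le}^{1/2}\|\nabla w\|_{\Le}^{1/2}$, $\|w\|_{\f L^6}\le c\|\nabla w\|_{\Le}$, $\|\nabla\fn d\|_{\f L^3}^2\le c(\|\Lap\fn d\|_{\Le}+1)$, and Young's inequality, absorbing small multiples of $\tfrac1{\varepsilon^2}\||\fn d|w\|_{\Le}^2$, $\tfrac{k}{2\varepsilon}\|\nabla w\|_{\Le}^2$, $\tfrac{\delta}{2\varepsilon}\|\Lap w\|_{\Le}^2$ back to the left. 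Integrating in $t$ and using $\tfrac1{4\varepsilon}\|w(0)\|_{\Le}^2\le c$, this gives (at the Galerkin level, up to projection errors that vanish as $n\to\infty$ since $R_n$ also occurs in $\fn q$)
\[
\|\nabla w\|_{L^2(0,T;\Le)}\le c\,\delta^{7/6},\qquad \|\Lap w\|_{L^2(0,T;\Le)}\le c\,\delta^{2/3}.
\]

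Finally I would conclude by Gagliardo–Nirenberg on the bounded $\C^{3,1}$ domain, using $w(t)\in\Hb$: $\|w(t)\|_{L^\infty(\Omega)}\le c\|w(t)\|_{\Le}^{1/4}\|\Lap w(t)\|_{\Le}^{3/4}$ and $\|\nabla w(t)\|_{\f L^3}\le c\|\nabla w(t)\|_{\Le}^{1/2}\|\Lap w(t)\|_{\Le}^{1/2}$. Raising to the power $8/3$ (so that $\tfrac34\cdot\tfrac83=2$ and $\tfrac12\cdot\tfrac83=\tfrac43$), pulling out the $L^\infty_t$‑norms and applying Hölder in time, one obtains $\|w\|_{L^{8/3}(L^\infty)}\le c\|w\|_{L^\infty(\Le)}^{1/4}\|\Lap w\|_{L^2(\Le)}^{3/4}\le c\,\delta^{19/24}\le c\,\delta^{1/3}$ and $\|\nabla w\|_{L^{8/3}(\f L^3)}\le c\,\|\nabla w\|_{L^\infty(\Le)}^{1/2}\|\Lap w\|_{L^2(\Le)}^{1/2}\le c\,\delta^{1/3}$, which is the assertion since $\nabla|\fn d|^2=\nabla w$. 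The hard part is the right‑hand side bookkeeping in the previous step: one must verify that, after Young's inequality, every low‑order term — especially those coming from $\f G$ and from the expansion of $\delta\,\fn d\cdot\Lapp\fn d$, which contain $\|\Lap\fn d\|_{\Le}\sim\delta^{-1/2}$, $\|\nabla\fn d\|_{\f L^6}\sim\delta^{-1/2}$ and factors $\tfrac1\varepsilon$ — stays bounded uniformly in $\delta$ (absorbed, or controlled by $\|\fn q\|_{L^2(\Le)}^2$, $\|\syn v\fn d\|_{L^2(L^2)}^2$, etc.), and it is exactly this balance that dictates the coupling $\varepsilon=\delta^{7/3}$.
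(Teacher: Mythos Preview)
Your approach --- testing the director equation with the penalization force $\tfrac1\varepsilon w\fn d$ --- is genuinely different from the paper's, which instead expands $\|\fn q\|_{\Le}^2=\|\delta\Lapp\fn d + R_n(\cdots)\|_{\Le}^2$ and extracts coercive contributions from the cross term $2\delta(\Lapp\fn d,\text{rest})$. The paper's route has two structural advantages that your scheme lacks: (i) the diagonal term $\delta^2\|\Lapp\fn d\|_{\Le}^2$ sits on the left and is available as an absorbing term in every Young inequality; (ii) the cross terms $2\delta(\Lapp\fn d,\f G)$ coming from the cubic Oseen--Frank pieces ($k_3,k_4,k_5$) are integrated by parts twice (using $\f\gamma_0(\Lap\fn d)=0$) to manufacture \emph{additional} coercive quantities such as $\delta\|\Lap(\fn d\cdot\curl\fn d)\|_{\Le}^2$, and their remainders are again absorbed into $\delta^2\|\Lapp\fn d\|^2$. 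Only after all this does the paper isolate $\tfrac{\delta}{4\varepsilon}\|\Lap w\|_{\Le}^2$ from the piece $\tfrac{2\delta}{\varepsilon}(\Lapp\fn d,w\fn d)$.

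Your testing recovers the same good term $\tfrac{\delta}{2\varepsilon}\|\Lap w\|^2$, but you have neither $\delta^2\|\Lapp\fn d\|^2$ nor $\delta\|\nabla\Lap\fn d\|^2$ on the left, and this is where the argument breaks. Concretely, expanding $\tfrac{\delta}{\varepsilon}(\Lapp\fn d,w\fn d)$ produces the remainder $\tfrac{\delta}{\varepsilon}(|\Lap\fn d|^2,w)$. With only $\|\Lap\fn d\|_{\Le}^2\le c\delta^{-1}$ and $\|w\|_{L^\infty}\le c\|\Lap w\|_{\Le}^{3/4}\|w\|_{\Le}^{1/4}$ at hand, Young's inequality against $\tfrac{\delta}{4\varepsilon}\|\Lap w\|^2$ leaves an error of order $\delta^{-3/5}\varepsilon^{-4/5}=\delta^{-37/15}$; multiplying by $\tfrac{\varepsilon}{\delta}=\delta^{4/3}$ gives $\|\Lap w\|_{L^2(\Le)}^2\lesssim\delta^{-17/15}$, which is too weak for your final Gagliardo--Nirenberg step to yield any positive power of $\delta$. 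Your assertion that after Young ``every low-order term \dots\ stays bounded uniformly in $\delta$'' (hence $\|\Lap w\|_{L^2}\le c\delta^{2/3}$) is therefore not justified with the absorbers you list. A secondary but genuine issue is the projection: $(\f G,\tfrac1\varepsilon R_n(w\fn d))$ cannot be integrated by parts directly, and you give no control on the commutator $(I-R_n)(w\fn d)$ that is uniform in $\delta$.
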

\begin{proof}
To prove this identity, we investigate the variational derivative $\fn q$.
Recall the identity $$ \Delta \fn d = \nabla\di \fn d - \curl\curl \fn d\,. $$
\begin{rem}
The result also holds for $k_1\neq k_2$, but then the proof gets more technical.
\end{rem}
The Definition of $\fn q$~\eqref{qn} gives
\begin{align}
\begin{split}
\| {\f q}_{n,\delta} \|_{\Le}^2 ={}& \delta ^2 \|  \Delta^2 \fn d \|_{\Le}^2  
\\
&+ 2\delta\left (\Delta^2\fn d ,R_n\left ( F_{\f h}( \fn d , \nabla \fn d) - \di F_{ \f S} ( \fn d , \nabla \fn d)+ \frac{1}{\varepsilon } ( |\fn d|^2 -1 ) \fn d  \right ) \right )\\
& +  \left \| R_n\left (F_{\f h}( \fn d , \nabla \fn d) - \di F_{ \f S} ( \fn d , \nabla \fn d)+ \frac{1}{\varepsilon } ( |\fn d|^2 -1 ) \fn d \right )\right \|_{\Le}^2 
\, .
\end{split}
\label{normqt}
\end{align}
We consider the second term on the right-hand side of~\eqref{normqt} further on. The projection $R_n$ can be ignored since $\Delta^2 \fn d \in Z_n$.
The definition of the variational derivative now gives
\begin{align}
\begin{split}
 ( &\f q_{n,\delta} , \Delta^2 \fn d ) \\&= \frac{\delta}{4} \| \Delta\fn d  \|_{\Le}^2+k  ( \Delta^2 \fn d, - \Delta\fn d  ) \\& \quad + k_3 \left ( ( \Delta^2  \fn d  , -\nabla  (( \di \fn d) |\fn d|^2) ) + ( \Delta^2  \fn d , \fn d ( \di \fn d)^2 ) \right )\\
 & \quad + 
 k_4 \left (( \Delta^2\fn d ,-\di ( \rot{\fn d  } ( \fn d \cdot \curl \fn d ) ) + ( \Delta^2 \fn d , \curl \fn d ( \fn d \cdot \curl \fn d)) \right )\\& \quad 
 + 4 k_5 ( ( \Delta^2 \fn d , -\di  ( \skn {d} \fn d \otimes \fn d )_{\skw}) + ( \Delta^2 \fn d, \skn d ^T \skn d \fn d ) ) \\
 & \quad + \frac{1}{\varepsilon } (  \Delta^2 \fn d , ( | \fn d|^2 -1 ) \fn d )  
 \\ & =  I_1+k I_2  + k_3 I_3 + k_4 I_4 + 4 k_5 I_5 + \frac{1}{\varepsilon} I_6 \,.
 \end{split}\label{prev}
\end{align}
The appearing terms are going to be estimated individually. Since $\Delta \Sr \f d_1 =0$, the definition of~\eqref{dar} grants that $\f \gamma_0(\Delta \fn d ) \equiv 0 $. Hence, the boundary terms vanish in the following integration by parts  
\begin{align*}
k  I_2= 
-k( \Delta^2 \fn d , \Delta\fn d ) =  k\| \nabla \Delta \fn d\|_{\Le}^2 \, .
\end{align*}
For the upcoming integration by parts, we transform the functions~$\fn d$ onto homogeneous Dirichlet boundary conditions. Due to definition~\eqref{dar}, $\fn d$ can be transformed via  $\fnt d:= \fn d - \Sr \f d_1$, where $\fnt d$ takes values in $Z_n$. The terms $I_3$, $I_4$ and $I_5$ in~\eqref{prev} can be written as 
\begin{align*}
k_3 I_3 + k_4 I_4 + 4 k_5 I_5  =   \left ( \Delta^2 \ftn d , - \di \left ( \fn d \cdot \f \Theta \dreidots \nabla \fn d \otimes \fn d \right ) + \nabla \fn d : \f \Theta \dreidots \nabla \fn d \otimes \fn d \right )  \,. 
\end{align*} 
With some vector calculus, we see 
\begin{align}
\begin{split}
   \delta &\left ( \Delta^2 \ftn d , - \di \left ( \fn d \cdot \f \Theta \dreidots \nabla \fn d \otimes \fn d \right ) + \nabla \fn d : \f \Theta \dreidots \nabla \fn d \otimes \fn d \right )   \\
={}&  \delta\left ( \Delta^2 \ftn d , - \di \left ( \ftn d \cdot \f \Theta \dreidots \nabla \ftn d \otimes \ftn d \right ) + \nabla \ftn d : \f \Theta \dreidots \nabla \ftn d \otimes \ftn d \right )  \\
  &  +\delta \left ( \Delta^2 \ftn d , - \di \left ( \Sr \f d_1 \cdot \f \Theta \dreidots \nabla  \fn d \otimes \fn d \right ) + \nabla  \Sr \f d_1 : \f \Theta \dreidots \nabla \fn d \otimes \fn d \right )  \\
  &  +\delta \left( \Delta^2 \ftn d , - \di \left (  \ftn d \cdot \f \Theta \dreidots \nabla \Sr \f d_1 \otimes \fn d \right ) + \nabla \ftn d : \f \Theta \dreidots \nabla \Sr \f d_1 \otimes \fn d\right )  \\
  &  +\delta \left( \Delta^2 \ftn d , - \di \left (  \ftn d \cdot \f \Theta \dreidots \nabla\ftn d \otimes \Sr \f d_1  \right ) + \nabla \ftn d : \f \Theta \dreidots \nabla \ftn d  \otimes \Sr \f d_1\right ) \,,
  \end{split}\label{umformu}
\end{align}  
which can be estimated by the Gagliardo--Nirenberg and Young inequality,
 \begin{align}
\begin{split}  
  k_3 I_3 +& k_4 I_4 + 4 k_5 I_5 \\
    \geq {}& \delta \left ( \Delta^2 \ftn d , - \di \left ( \ftn d \cdot \f \Theta \dreidots \nabla \ftn d \otimes \ftn d \right ) + \nabla \ftn d : \f \Theta \dreidots \nabla \ftn d \otimes \ftn d \right ) \\ 
  & - \delta\| \Delta^2\fn d \| _{\Le} \| \Sr \f d _1 \|_{\f W^{1,\infty }} \| \fn d \|_{\f W^{1,4}} \| \fn d \|_{\f L^4}+
  \\
  & - \delta\| \Delta^2\fn d \| _{\Le}
 \| \Sr \f d_1 \|_{\f L^\infty} \left (\| \fn d \|_{\Hc
 }\| \fn d\|_{\f L^\infty} + \|\fn d \|_{\f W^{1,4}}^2\right )
\\
&- \delta\| \Delta^2 \fn d\|_{\Le}    \| \ftn d \|_{\f W^{1,4}} \| \Sr \f d_1 \|_{\f W^{1,\infty}} \| \fn d \|_{\f L^4}\\& 
- \delta\| \Delta^2 \fn d\|_{\Le}    
 \|\ftn d  \|_{\f L^4} (\| \Sr \f d_1 \|_{\f W^{2,\infty}}\| \fn d\|_{\f L^4} + \|\Sr \f d_1  \|_{\f W^{1,\infty}}\|\fn d \|_{\f W^{1,4}}) \\
&-\delta \| \Delta^2 \fn d\|_{\Le}  \| \ftn d \|_{\f W^{1,4}} Therefore, w^2  \| \Sr \f d_1  \|_{\f L^\infty}
\\
&-\delta \| \Delta^2 \fn d\|_{\Le}
 \|\ftn d  \|_{\f L^\infty} (\| \ftn d \|_{\Hc}\| \Sr \f d_1\|_{\f L^\infty} + \|\ftn d   \|_{\He}\|\Sr \f d_1 \|_{\f W^{1,\infty}}) 
   \\
  \geq {}&\delta\left ( \Delta^2 \ftn d , - \di \left ( \ftn d \cdot \f \Theta \dreidots \nabla \ftn d \otimes \ftn d \right ) + \nabla \ftn d : \f \Theta \dreidots \nabla \ftn d \otimes \ftn d \right ) - \frac{\delta^2}{4} \| \Delta^2 \fn d\|_{\Le}^2\\ 
    & - c \| \Sr \f d_1 \|_{\Hg}^2\left (\| \fn d \|_{\Hc}^2\| \fn d \|_{\f L^\infty}^2 + \| \fn d \|_{\f W^{1,4}}^4    +\| \ftn d \|_{\Hc}^2\| \ftn d \|_{\f L^\infty}^2 + \| \ftn d  \|_{\f W^{1,4}}^4    +1  \right )   \\
   \geq {}&\delta\left ( \Delta^2 \ftn d , - \di \left ( \ftn d \cdot \f \Theta \dreidots \nabla \ftn d \otimes \ftn d \right ) + \nabla \ftn d : \f \Theta \dreidots \nabla \ftn d \otimes \ftn d \right ) \\ 
    &- \frac{\delta^2}{4} \| \Delta^2 \fn d\|_{\Le}^2 - c \|  \f d_1 \|_{\Hrand{7}}^2\left (\| \fn d \|_{\Hc}^{\nicefrac{8}{3}}\| \fn d \|_{\f L^{12}}^{\nicefrac{4}{3}} + \| \ftn d \|_{\Hc}^{\nicefrac{8}{3}}\| \ftn d \|_{\f L^{12}}^{\nicefrac{4}{3}}  +1  \right )\,.
    \end{split}\label{abschaetz}
      \end{align}
      It should be recognized that the norms of the transformed variable ${\ftn d}$ can still be estimated by the original variable  $\fn d$
      \begin{align*}
\|{\ftn d} \|_{\Hc} \leq \| {\fn d }\| _{ \Hc} +\| \Sr \f d_1 \|_{\Hc} \leq \| \fn d \|_{\Hc} + c \| \f d _1\|_{\Hrand{3}} \,.
\end{align*}

      In the following, the Laplace operator is going to be applied to the mixed terms. Therefore, we recall the product rule for the Laplace operator 
      $$ \Delta ( \f a \cdot\f b ) = \Delta \f a\cdot  \f b + 2 \nabla \f a : \nabla \f b + \f a\cdot  \Delta \f b\,  \quad \text{for all } \f a, \f b \in \C^1(\ov \Omega;\R^3).$$
We are going to perform the appropriate estimates for the term $I_4$ in detail, the other terms are bounded analogously. 
An integration by parts shows
\begin{align}
\begin{split}
&\left  ( \Delta^2 \fnt d , -\di (\rot{\fnt d} ( \fnt d \cdot \curl \fnt d ))\right ) + \left ( \Delta^2 \fnt d , \curl \fnt d ( \fnt d \cdot \curl \fnt d )  \right ) \\
\,.
\end{split}\label{rechdis}
\end{align}
The boundary terms vanish, since the transformed variable $\fnt d$ fulfils homogeneous Dirichlet boundary conditions. 
Another integration by parts shows
\begin{align*}
&\left  (\nabla  \Delta \fnt d ; \nabla \di  (\rot{\fnt d} ( \fnt d \cdot \curl \fnt d ))\right ) + \left ( \nabla \Delta \fnt d ; -\nabla \left (\curl \fnt d ( \fnt d \cdot \curl \fnt d ) \right )\right ) \\
&\quad={} \left  (  \Delta \fnt d , -\di \Delta (\rot{\fnt d} ( \fnt d \cdot \curl \fnt d ))\right ) + \left (  \Delta \fnt d , \Delta \left (\curl \fnt d ( \fnt d \cdot \curl \fnt d ) \right )\right ) \\
&\quad ={} \left  (  \Delta (\nabla \fnt d ),  \Delta (\rot{\fnt d} ( \fnt d \cdot \curl \fnt d ))\right ) + \left (  \Delta \fnt d , \Delta \left (\curl \fnt d ( \fnt d \cdot \curl \fnt d ) \right )\right )\,.
\end{align*}
Here, the boundary terms vanish since $\f \gamma_0( \Delta\fnt d)=0$.
Using the product rule for the Laplace operator, we get
 \begin{align}
\begin{split}
\big  ( \Delta (\nabla \fnt d ),&  \Delta (\rot{\fnt d} ( \fnt d \cdot \curl \fnt d ))\big ) + \left ( \Delta \fnt d , \Delta(\curl \fnt d ( \fnt d \cdot \curl \fnt d ) )\right ) 
\\
={}& \left  (\Delta (\nabla \fnt d  ) : \rot{\fnt d }  + \curl \fnt d \cdot  \Delta \fnt d , \Delta ( \fnt d \cdot \curl \fnt d ) \right ) \\
&+ 2 \left ( (\nabla \Delta \fnt d)_{\skw}: \rot{\Delta \fnt d} ,  \fnt d \cdot \curl \fnt d \right ) 
\\&
+ 2 \left ( \Delta ( \nabla \fnt d)_{\skw}:\nabla\rot{  \fnt d}  +  (\nabla( \curl \fnt d))^T \Delta \fnt d , \nabla (\fnt d \cdot \curl \fnt d ) \right ) \\
={}& \left  \|\Delta (\fnt d \cdot \curl \fnt d  )\right \|_{\Le}^2 - 2 \left ( \nabla (\nabla \fnt d)_{\skw} \dreidots \nabla \rot{\fnt d}   , \Delta (\fnt d \cdot \curl \fnt d  ) \right ) \\
&+ 2 \left ( (\nabla \Delta \fnt d)_{\skw}: \rot{\Delta \fnt d} ,  \fnt d \cdot \curl \fnt d \right ) \\&+ 2 \left ( \Delta ( \nabla \fnt d)_{\skw}:\nabla\rot{  \fnt d}  +  (\nabla( \curl \fnt d))^T \Delta \fnt d , \nabla (\fnt d \cdot \curl \fnt d ) \right ) \,.
\end{split}\label{umformung}
\end{align}
The H\"older, Gagliardo--Nirenberg and Young inequality allow to estimate the non-positive terms on the right hand side of the previous estimate,
\begin{align*}
\begin{split}
 2  k_4\delta  &    \left ( \nabla (\nabla \fnt d)_{\skw} \dreidots \nabla \rot{\fnt d}   , \Delta (\fnt d \cdot \curl \fnt d  ) \right )  \\
&\leq c \delta  \| \fnt d\|_{\f W^{1,6}}\| \fnt d\|_{\f W^{2,6}} \| \Delta ( \fnt d \cdot \curl \fnt d)\|_{\f L^{\nicefrac{3}{2}}} 
 \\
 &\leq c \delta  \| \fnt d\|_{\Hc}^{\nicefrac{3}{2}} \| \fnt d\|_{\Hg}^{\nicefrac{1}{2}}\|\Delta (\fnt d \cdot \curl \fnt d)\|_{\Le } ^{\nicefrac{3}{4}}\|\fnt d \cdot \curl \fnt d\|_{\Le} ^{\nicefrac{1}{4}}  \\
&\leq \frac{\delta^2}{32} \| \Delta^2 \fnt d \|_{\Le}^2 +\frac{k_4\delta }{8}\| \Delta(\fnt d \cdot \curl \fnt d)\|^2_{\Le} 
+  c \delta^{\nicefrac{1}{3}}\| \fnt d \|_{\Hc}^{4} \|\fnt d \cdot \curl \fnt d\|_{\Le}^{\nicefrac{2}{3}}\,,
 \end{split}
 \\
\begin{split}
 2k_4\delta&\left ( (\nabla \Delta \fnt d)_{\skw}: \rot{\Delta \fnt d} ,  \fnt d \cdot \curl \fnt d \right )  \\
 &\leq c \delta\| \fnt d\|_{\Hf} \| \fnt d\|_{\Hc} \| \fnt d \cdot \curl \fnt d\|_{\f L^\infty}
 \\
 &\leq c\delta \| \fnt d \|_{\Hc}^{\nicefrac{3}{2}}  \|\fnt d\|_{\Hg}^{\nicefrac{1}{2}} \| \Delta (\fnt d \cdot \curl \fnt d)\|_{\Le } ^{\nicefrac{3}{4}}\|\fnt d \cdot \curl \fnt d\|_{\Le} ^{\nicefrac{1}{4}} 
\\ & 
\leq \frac{\delta^2}{32} \| \Delta^2 \fnt d \|_{\Le}^2 +\frac{k_4\delta }{8}\| \Delta(\fnt d \cdot \curl \fnt d)\|^2_{\Le} 
+  c \delta^{\nicefrac{1}{3}}\| \fnt d \|_{\Hc}^{4} \|\fnt d \cdot \curl \fnt d\|_{\Le}^{\nicefrac{2}{3}}\,,
\end{split}
\\
\begin{split}
 2k_4\delta&\left ( \Delta ( \nabla \fnt d)_{\skw}:\nabla\rot{  \fnt d} , \nabla (\fnt d \cdot \curl \fnt d ) \right ) \\
 &\leq c \delta   \| \fnt d\|_{\Hf}\| \fnt d \|_{\f W^{1,6}} \| \fnt d \cdot \curl \fnt d \|_{\f W^{1,3}}\\
&\leq c  \delta \| \fnt d\|_{\Hg}^{\nicefrac{1}{2}}  \| \fnt d \|_{\Hc}^{\nicefrac{3}{2}}
\| \fnt d \cdot \curl \fnt d \|_{\Hc}^{\nicefrac{3}{4}} \| \fnt d \cdot \curl \fnt d \|_{\Le}^{\nicefrac{1}{4}} 
\\
&\leq \frac{\delta^2}{32}\| \Delta^2 \fnt d \|_{\Le}^2 + \frac{k_4 \delta }{8 }\|\Delta( \fnt d \cdot \curl \fnt d) \|_{\Le}^2  + c  \delta^{\nicefrac{1}{3}} \| \fnt d \|_{\Hc}^{4}  \| \fnt d \cdot \curl \fnt d\|_{\Le}^{\nicefrac{2}{3}}  \,,
\end{split}
\\
\begin{split}
2k_4 \delta & \left ( (\nabla( \curl \fnt d))^T  \Delta \fnt d , \nabla (\fnt d \cdot \curl \fnt d ) \right ) \\ 
  &\leq c\delta \| \fnt d \|_{\Hc} \| \fnt d\|_{\f W^{2,3}} \| \fnt d \cdot \curl \fnt d \|_{\f W^{1,3}}\\
&\leq c \delta  \| \fnt d\|_{\Hc}^{\nicefrac{7}{4}}  \| \fnt d\|_{\Hg}^{\nicefrac{1}{4}}
\|( \fnt d \cdot \curl \fnt d) \|_{\Hc}^{\nicefrac{3}{4}} \| \fnt d \cdot \curl \fnt d \|_{\Le}^{\nicefrac{1}{4}} 
\\
&\leq \frac{\delta^2 }{32}\|  \Delta^2\fnt d \|_{\Le}^2 + \frac{k_4\delta }{8 }\|\Delta( \fnt d \cdot \curl \fnt d) \|_{\Le}^2 + c \delta^{\nicefrac{3}{4}} \| \fnt d \|_{\Hc}^{\nicefrac{7}{2}}  \| \fnt d \cdot \curl \fnt d\|_{\Le}^{\nicefrac{1}{2}} \,.
\end{split}
\end{align*}
Together, we get
\begin{align*}
k_4 \delta  &\left (\left  ( \Delta^2 \fnt d , -\di (\rot {\fnt d} ( \fnt d \cdot \curl \fnt d))\right ) + \left ( \Delta^2 \fnt d , \curl \fnt d ( \fnt d \cdot \curl \fnt d) \right )\right )\\ & \geq \frac{k_4\delta }{2} \|\Delta(  \fnt d \cdot \curl \fnt d) \|_{\Le}^2   - \frac{\delta^2}{8}\| \Delta^2 \fnt d \|_{\Le}^2 \\&\quad  - 
c  \delta^{\nicefrac{1}{3}} \| \fnt d \|_{\Hc}^{4}  \| \fnt d \cdot \curl \fnt d\|_{\Le}^{\nicefrac{2}{3}} -c \delta^{\nicefrac{3}{4}} \| \fnt d \|_{\Hc}^{\nicefrac{7}{2}}  \| \fnt d \cdot \curl \fnt d\|_{\Le}^{\nicefrac{1}{2}}\,.
\end{align*}

Similarly, we get for the terms $I_3$ and $I_5$
\begin{align*}
k_3 I_3 + 4 k_5 I_5
 \geq{}& \frac{k_3 \delta }{2} \|\Delta((\di  \fnt d ) \fnt d)\|_{\Le}^2 + 2k_5 \delta \| \Delta(\skn d \fnt d)\|_{\Le}^2 - \frac{\delta^2 }{4}\| \Delta ^2 \fnt d \|_{\Le}^2\\  & 
 - 
c  \delta^{\nicefrac{1}{3}} \| \fnt d \|_{\Hc}^{4}  \left \|  F(\fnt d , \nabla \fnt d)\right \|_{\Le}^{\nicefrac{2}{3}} -c \delta^{\nicefrac{3}{4}} \| \fnt d \|_{\Hc}^{\nicefrac{7}{2}} \left  \|  F(\fnt d , \nabla \fnt d)\right \|_{\Le}^{\nicefrac{1}{2}}\,. 
\end{align*}
Remark that the nonlinear terms can be transformed with similar calculations as in~\eqref{umformu} and estimates as in~\eqref{umformu} to estimates for the variable $\fn d$ with inhomogeneous boundary values.
Therefore, one has to employ as beforehand $\Lap \fn d = \Lap \fnt d $.

For the term $I_6$, there is no transformation onto homogeneous boundary values necessary since the given boundary data has norm one, i.e. $| \f d_1|=1 $ on $\partial \Omega$. Additionally, $\Delta \fn d = 0 $ on $\partial \Omega$ such that the boundary term of the following integration by parts vanishes
\begin{align*}
 (  \Delta^2 \fn d , ( | \fn d|^2 -1 ) \fn d ) 
 ={}& ( \Delta \fn d ,  \Delta( | \fn d|^2 -1 ) \fn d  ) + ( \Delta \fn d , \Delta \fn d ( | \fn d |^2 -1)) \\
 &+ 2 ( \Delta \fn d , \nabla \fn d \nabla (| \fn d|^2 -1)) \\
 ={}& \frac{1}{2 } \left \| \Delta(| \fn d |^2 -1) \right \| _ {\Le}^2  - ( |\nabla  \fn d|^2 ,  \Delta( | \fn d|^2 -1 )   ) 
\\& 
 + ( \Delta \fn d , \Delta \fn d ( | \fn d |^2 -1)) + 2 ( \Delta \fn d , \nabla \fn d \nabla (| \fn d|^2 -1))
 \,.
 \end{align*}
 Estimating again the right-hand side with  H\"older, Gagliardo--Nirenberg and Young inequality, we get
 \begin{align*}
 I_6 \geq{}&  \frac{1}{2}\left \| \Delta(| \fn d |^2 -1) \right \| _ {\Le}^2 - \| \nabla \fn d \|_{\f L^6}^2 \| \Delta ( | \fn d |^2-1)\|_{L^{\nicefrac{3}{2}}} - \| \Delta \fn d\|_{\Le}^2 \left \| |\fn d |^2-1\right \|_{L^\infty} \\&- 2 \| \Delta \fn d \|_{\Le} \|\nabla \fn d \|_{\f L^6} \left \| \nabla ( | \fn d|^2 -1) \right \|_{L^3} \\
 \geq {}& \frac{1}{2 } \left \| \Delta(| \fn d |^2 -1) \right \| _ {\Le}^2 - c
 \|  \fn d\|_{\Hc}^2 \left \| \Delta(|\fn d |^2-1)\right \|_{L^2 }^{\nicefrac{3}{4}} \left \| |\fn d |^2-1\right \|_{L^2}^{\nicefrac{1}{4} }
 \\
 \geq{}&\frac{1}{4 } \left \| \Delta(| \fn d |^2 -1) \right \| _ {\Le}^2 
- c \| \fn d \|_{\Hc}^{\nicefrac{16}{5}}  \left \| |\fn d |^2-1\right \|_{L^2}^{\nicefrac{2}{5} }
%
\end{align*}

 Together, we get the coercivity estimate
 \begin{align}
\begin{split}
\| { \f q}_{n}\|_{\Le}^2 &\geq \delta^2 \| \Delta^2 \fn d\|^2_{\Le} + \left \| R_n\left (F_{\f h}( \fn d , \nabla \fn d) - \di F_{ \f S} ( \fn d , \nabla \fn d)\right )\right \|_{\Le}^2 + \frac{\delta  }{2} \| \nabla \Delta \fn d \| _   {\Le}^2\\&\quad + \frac{\delta k_3 }{2} \| \Delta ( ( \di \fn d ) \fn d) \|_{\Le}^2  + \frac{\delta k_4 }{2 } \| \Delta ( \fn d \cdot \curl \fn d ) \| _{ \Le}^2\\ & \quad + \frac{\delta k_5 }{2} \| \Delta ( \skn d \fn d) \|_{\Le}^2 + \frac{\delta}{4\varepsilon}\| \Delta | \fn d|^2\|_{\Le}^2  \\&\quad
- c \|  \f d_1 \|_{\Hrand{7}}^2\left (\| \fn d \|_{\Hc}^{\nicefrac{8}{3}}\| \fn d \|_{\f L^{12}}^{\nicefrac{4}{3}}  +1  \right )- c \frac{ \delta}{\varepsilon} \left \| |\fn d |^2-1\right \|_{L^2}^{\nicefrac{22}{31} }\\
&\quad-c  \delta^{\nicefrac{1}{3}} \| \fn d \|_{\Hc}^{4}  \left \|  F(\fn d , \nabla \fn d)\right \|_{\Le}^{\nicefrac{2}{3}} -c \delta^{\nicefrac{3}{4}} \| \fn d \|_{\Hc}^{\nicefrac{7}{2}} \left  \|  F(\fn d , \nabla \fn d)\right \|_{\Le}^{\nicefrac{1}{2}}
\,.
\end{split}\label{coerc}
\end{align}

 This estimate~\eqref{coerc} reinserted in~\eqref{entrodiss} gives another a priori estimate,
 \begin{align*}
\begin{split}
&\frac{1}{2}\| \fn v \|_{L^\infty(\f L^2)}^2 +  \frac{\delta}{2} \| \Delta \fn d\|_{L^\infty(\Le)}^2+ \frac{k}{2}\| \nabla \fn d\|_{L^\infty(\Le)}^2+ \frac{k_3}2 \| (\di \fn d )\fn d\|_{L^\infty(\Le)}^2   \\ &+\frac{k_4}2  \| \fn d \cdot \curl \fn d\|_{L^\infty(L^2)}^2   + \frac{k_5}2 \| \skn d \fn d\|_{L^\infty(\Le)}^2\notag
  + \frac{1}{4\varepsilon} \left \| | \fn d |^2-1\right \|_{L^\infty (L^2)}^2
\\
& + \mu_1\left \Vert \fn d\cdot \syn v \fn d \right \Vert_{L^2(L^2)}^2  + \frac{\mu_4}{2} \|\syn v \|_{L^2(\Le)}^2+ \alpha \|\syn v\fn d\|_{L^2(\Le)}^2
\\& + \beta  \left (\delta^2 \| \Delta^2 \fn d\|^2_{L^2(\Le)}+ \left \| R_n\left (\partial_{\f h} F( \fn d , \nabla \fn d) - \di \partial_{\f S} F  ( \fn d , \nabla \fn d)\right )\right \|_{\Le}^2\right )    \\ &   +\beta\left (  \frac{\delta  }{2} \| \nabla \Delta \fn d \| _   {L^2(\Le)}^2 + \frac{\delta k_3 }{2} \| \Delta ( ( \di \fn d ) \fn d) \|_{L^2(\Le)}^2+ \frac{\delta}{2\varepsilon}\left \| \Delta | \fn d |^2 \right \|_{L^2}^2\right )
\\ & 
 +\beta \left (  \frac{\delta k_5 }{2} \| \Delta ( \skn d \fn d) \|_{L^2(\Le)}^2   +\frac{\delta k_4 }{2 } \| \Delta ( \fn d \cdot \curl \fn d ) \| _{ L^2(\Le)}^2  \right )
\\
&\leq 2 K
+\beta  c \left (\| \fn d \|_{\Hc}^{\nicefrac{8}{3}}+ \frac{ \delta}{\varepsilon}  \| \fn d \|_{\Hc}^{\nicefrac{16}{5}}\left \| |\fn d |^2-1\right \|_{L^2}^{\nicefrac{2}{5} }+ \delta^{\nicefrac{1}{3}} \| \fn d \|_{\Hc}^{4}  + \delta^{\nicefrac{3}{4}} \| \fn d \|_{\Hc}^{\nicefrac{7}{2}} \right )
\\
& \leq c \left ( 1+ \frac{1}{\delta^{\nicefrac{4}{3}}} +  \frac{\delta^{\nicefrac{3}{5}}}{\varepsilon^{\nicefrac{4}{5}}} + \frac{1 }{\delta^{\nicefrac{5}{3}}}+ \frac{1}{\delta}\right )
\,.
\end{split}
\end{align*}
 Here, we explicitly used the estimates $ \| \fn d \|^2_{\Hi}\leq \delta^{-1}$ and $ \left \| |\fn d |^2-1\right \|_{L^2}^2 \leq \varepsilon$. 
 By the choice $\varepsilon = \delta^{\nicefrac{7}{3}}$ we see
 \begin{align*}
 \| \Delta ( | \fn d |^2 -1)\|_{L^2(\Le)}^2 \leq  c\left  ( \frac{\varepsilon}{\delta} +\frac{\varepsilon}{\delta^{\nicefrac{1}{3}}}+ \frac{\varepsilon^{\nicefrac{1}{5}}}{\delta^{\nicefrac{2}{5}}} +   \frac{\varepsilon}{\delta^{\nicefrac{8}{3}}} + \frac{\varepsilon}{\delta^2} \right ) \leq c \left ( 1+ \frac{1}{\delta^{\nicefrac{1}{3}}}\right ) \,.
 \end{align*}
The assertion follows with the Gagliardo--Nirenberg inequality,
\begin{align*}
\left \| | \fn d|^2 -1\right \|_{L^{\nicefrac{8}{3}}(L^\infty)} + \left \| | \fn d |^2-1\right \|_{L^{\nicefrac{8}{3}}(W^{1,3})} &\leq c  \| \Delta ( | \fn d |^2 -1)\|_{L^2(\Le)}^{\nicefrac{3}{4}} \|  | \fn d |^2 -1\|_{L^\infty(\Le)}^{\nicefrac{1}{4}}\\ & \leq c (1 +  \delta^{\nicefrac{-1}{4}}) \delta^{\nicefrac{7}{12}} \,.
\end{align*}

\end{proof}
\begin{rem}
If we choose $\varepsilon = \delta^{\nicefrac{4}{3}}$, it can be shown that $ \delta^{\nicefrac{11}{3}} \| \fn d \|_{L^2(\Hg)}^2 $ is bounded. Together with the global boundedness of $\| \f d \|_{L^\infty(\f L^{12})}$, we can derive global boundedness of the term $\delta \| \Lap
 \fn d \|_{L^{\nicefrac{11}{3}}\Le}^2$ by the Gagliardo--Nirenberg estimates
 \begin{align*}
 \delta \left (\int_0^T\left (\| \Lap
 \fn d(t) \|_{\Le}^2\right )^{\nicefrac{11}{3}} \de t \right )^{\nicefrac{3}{11}}  \leq{}& \delta c \left (\int_0^T\left ( \| \f d (t)\|_{\Hg}^{\nicefrac{6}{11}} \| \f d (t)\|_{\f L^{12}}^{\nicefrac{16}{11}}\right )^{\nicefrac{11}{3}} \de t \right )^{\nicefrac{3}{11}}\\
  \leq{}& 
  c \left (\int_0^T\left ( \left (\delta^{\nicefrac{11}{3}} \| \fn d(t) \|_{\Hg}^2\right )^{\nicefrac{3}{11}}  \right )^{\nicefrac{11}{3}} \de t \right )^{\nicefrac{3}{11}}\| \f d \|_{L^\infty(\f L^{12})}^{\nicefrac{16}{11}}
 \\\leq{}& c\left (\delta^{\nicefrac{11}{3}} \| \fn d(t) \|_{L^2(\Hg)}^2 \right )^{\nicefrac{3}{11}}\| \f d \|_{L^\infty(\f L^{12})}^{\nicefrac{16}{11}}
\end{align*}
  Since locally one would expect an $\f L^\infty$-bound on the director (compare~\cite{bethuel,Dim3}), this will hopefully lead to additional local bounds on the defect measure $\mu_t$. 
\end{rem}
\subsection{Support of the defect angle measure}
For the defect angle measure $\nu^\infty$ in Proposition~\ref{lem:meas} we see that under the additional assumptions of Proposition~\ref{prop:apri2an}, the support is $\Se^{d^2-1}\times \Se^{d-1}_{\nicefrac{1}{2}}$ instead of $\Se^{d^2-1}\times \ov B_d$. Here, $\Se^{d-1}_{\nicefrac{1}{2}}$ is the sphere with radius $\frac{1}{2}$ in $B_d$, which corresponds to the unit sphere in untransformed coordinates.
\begin{proposition}
Under the assumptions of Proposition~\ref{prop:apri2an}, the defect measure $m$ is supported on $\Se^{d^2-1}\times \Se^{d-1}_{\nicefrac{1}{2}}$.

\end{proposition}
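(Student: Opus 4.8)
The statement is about the concentration angle measure $\nu^\infty$ attached to $m$ in Proposition~\ref{lem:meas}; its $\tilde{\f S}$-factor is already known to be $\Se^{d^2-1}$, so only the $\tilde{\f h}$-factor has to be sharpened from $\ov B_d$ to $\Se^{d-1}_{\nicefrac{1}{2}}$ (the image of the unit sphere under the transform of Proposition~\ref{lem:meas}). The plan is to feed the new bound of Proposition~\ref{prop:apri2an} into the generalized Young measure machinery of Theorem~\ref{thm:young}. First I would note that, after passing $n\to\infty$ — using weak-$*$ lower semicontinuity of the $L^{\nicefrac{8}{3}}(0,T;L^\infty(\Omega))$-norm together with $|\fn d|^2\to|\fd d|^2$ in the sense of distributions — Proposition~\ref{prop:apri2an} yields $\||\fd d|^2-1\|_{L^{\nicefrac{8}{3}}(0,T;L^\infty(\Omega))}\le c\,\delta^{\nicefrac{1}{3}}\to0$. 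Heuristically this says the director stays of unit length even where $\nabla\fd d$ concentrates, so concentration in the $\f h$-slot can only sit over $\{|\f h|=1\}$; the work is to convert this into a support statement for $\nu^\infty$.

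Next I would choose a cut-off $\chi\in\C_b([0,\infty))$ with $\chi\ge0$, $\chi(s)=0\Leftrightarrow s=1$, and $\lim_{s\to\infty}\chi(s)$ existing (for instance $\chi(s)=(s-1)^2/(1+s^2)$), and test the Young measure against
\begin{align*}
f(\f h,\f S):=\chi(|\f h|^2)\,(1+|\f h|^2)(1+|\f S|^2)\,.
\end{align*}
This $f$ has admissible growth (at most quadratic in $\f h$ times quadratic in $\f S$), and a direct computation with \eqref{transi} gives the recession function $\tilde f(\tilde{\f h},\tilde{\f S})=\chi\!\big(|\tilde{\f h}|^2/(1-|\tilde{\f h}|^2)\big)$, which extends continuously to $\ov B_d\times\ov B_{d\times d}$ (with value $\lim_{s\to\infty}\chi(s)$ on $|\tilde{\f h}|=1$); hence $f\in\mathcal R$. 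The point is that $\tilde f\ge0$ and, on the set $\ov B_d\times\Se^{d^2-1}$ on which $\nu^\infty$ is already supported by Proposition~\ref{lem:meas}, $\tilde f$ vanishes exactly where $|\tilde{\f h}|^2=\nicefrac{1}{2}$, i.e.\ on $\Se^{d-1}_{\nicefrac{1}{2}}\times\Se^{d^2-1}$.

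Then I would apply Theorem~\ref{thm:young} to this $f$ along the subsequence generating the measure-valued solution: $\int_Qf(\fd d,\nabla\fd d)\,\de\f y$ converges to the sum of an oscillation term $\int_Q\langle f(\f d(\f y),\cdot),\nu^o_{\f y}\rangle\,\de\f y$ and a concentration term $\int_{\ov Q}\langle\tilde f,\nu^\infty_{\f y}\rangle\,m(\de\f y)$. The oscillation term is zero because $\chi(|\f d|^2)\equiv0$ a.e.\ ($|\f d|=1$). For the left-hand side I would use the $t$-uniform bound $\int_\Omega(1+|\fd d(t)|^2)(1+|\nabla\fd d(t)|^2)\,\de\f x\le C$, which follows at once from \eqref{apri3} (the $L^\infty$-in-time bounds on $\fd d$ in $\He$ and $\Le$ together with the explicit term $\esssup_t\int_\Omega|\fd d|^2|\nabla\fd d|^2$), and split $[0,T]$ into the ``good'' set $G_\delta:=\{t:\||\fd d(t)|^2-1\|_{L^\infty(\Omega)}<\eta\}$ and its complement: on $G_\delta$ uniform continuity of $\chi$ on $[\nicefrac{1}{2},\nicefrac{3}{2}]$ gives $\chi(|\fd d|^2)<\varepsilon$ pointwise, while $|[0,T]\setminus G_\delta|\le c\,\eta^{-\nicefrac{8}{3}}\delta^{\nicefrac{8}{9}}$ by Chebyshev's inequality and Proposition~\ref{prop:apri2an}, with $\chi$ merely bounded there. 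This gives $\limsup_{\delta\to0}\int_Qf(\fd d,\nabla\fd d)\,\de\f y\le\varepsilon\,CT$ for every $\varepsilon>0$, so the left-hand side tends to $0$. Hence $\int_{\ov Q}\langle\tilde f,\nu^\infty_{\f y}\rangle\,m(\de\f y)=0$, and since $\tilde f\ge0$ the probability measure $\nu^\infty_{\f y}$ must be supported in $\{\tilde f=0\}=\Se^{d-1}_{\nicefrac{1}{2}}\times\Se^{d^2-1}$ for $m$-a.e.\ $\f y$; combined with Proposition~\ref{lem:meas} this is the assertion.

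The main obstacle is exactly the gap between the $L^{\nicefrac{8}{3}}$-in-time control of $|\fd d|^2-1$ provided by Proposition~\ref{prop:apri2an} and the pointwise-in-$t$ control one would like: concentration of $\nabla\fd d$ may still occur over a time set of vanishing measure, on which $|\fd d|^2\approx1$ fails. The decomposition into $G_\delta$ and its complement, together with the explicit $\delta^{\nicefrac{1}{3}}$-rate, is what neutralizes this; the growth/transform computation for $f$, the vanishing of the oscillation term, and the uniform-in-$t$ bound from \eqref{apri3} are all routine by comparison.
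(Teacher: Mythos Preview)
Your argument is correct and follows the same scheme as the paper: feed a test function of the form (function of $|\f h|^2$)$\times(1+|\f S|^2)$ into Theorem~\ref{thm:young} and use Proposition~\ref{prop:apri2an} to show the sequence side vanishes. Two differences are worth noting. First, the paper takes the signed choice $f(\f h,\f S)=(|\f h|^2-1)(1+|\f S|^2)$ with recession $2|\tilde{\f h}|^2-1$, and estimates $\int_Q f(\fd d,\nabla\fd d)$ in one line by $\||\fd d|^2-1\|_{L^1(L^\infty)}\,\|1+|\nabla\fd d|^2\|_{L^\infty(L^1)}$; your Chebyshev good-set/bad-set splitting is not needed, since $\chi(s)\le|s-1|$ gives exactly the same Hölder bound. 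Second, and conversely, your insistence on a \emph{nonnegative} $\chi$ is the right call: from the paper's signed $f$ one only gets $\int_{\ov Q}\langle\nu^\infty,\,2|\tilde{\f h}|^2-1\rangle\,m=0$, and a signed integrand with zero mean does not pin down the support (e.g.\ a mixture of $|\tilde{\f h}|=0$ and $|\tilde{\f h}|=1$ would also do). Replacing $(|\f h|^2-1)$ by $||\f h|^2-1|$, or by your $\chi(|\f h|^2)$, is what actually forces $\nu^\infty$ onto $\{|\tilde{\f h}|^2=\tfrac12\}$. Finally, your $n\to\infty$ passage can be phrased more simply: by \eqref{s:d} and $\Hc\hookrightarrow\C(\ov\Omega)$ one has $|\fn d|^2\to|\fd d|^2$ strongly in $L^{8/3}(0,T;L^\infty)$, so the bound of Proposition~\ref{prop:apri2an} carries over directly, no lower semicontinuity needed.
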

\begin{proof}
In convergence result~\eqref{convinS}, we take the test function $ f ( \f h , \f S) : = ( | \f h|^2-1) (1+| \f S|^2) = \frac{ | \f h|^2-1}{ | \f h|^2+1}(1+|\f	h|^2)(1+| \f S|^2)$. 
First we observe
\begin{align*}
\int_0^T\left (  | \fd d(t) |^2 -1 , 1+ | \nabla \fd d (t)| ^2  \right ) \de t \leq  c \|   | \fd d |^2 -1 \|_{L^{\nicefrac{8}{3}}(\f L^\infty)} ^2 \left (\| \nabla \fd d \|_{L^\infty(\Le)}^2+1\right )   \,.
\end{align*}
Thus, the term goes to zero for $\delta \ra 0$ due to Proposition~\ref{prop:apri2an}. 
On the other hand, we get
\begin{align*}
\int_0^T\left ( ( | \fd d(t) |^2 -1 ), 1+ | \nabla \fd d (t)| ^2  \right ) \de t  \longrightarrow \int_0^T \left (\int_\Omega \left \langle \nu^0_{(\f x ,t)} ,\frac{|\f d|^2-1}{|\f d|^2+1} \right \rangle \de \f x +
\int_{\ov \Omega} \left \langle \nu^\infty_{(\f x ,t)} , 2 | \f h|^2 -1 \right \rangle m_t(\de \f x) \right ) \de t \,.  
\end{align*}
Since $\f d$ has norm one a.\,e.~with respect to the Lebesgue measure, the first term on the right-hand side vanishes. This implies that the second term has to be zero as well. Consequently, the measure $\nu^\infty_{(\f x ,t)}
$ is  supported on the sphere with radius one-half, which corresponds to the unit sphere in $\R^d$. 
Thus, the measure $\nu^\infty_{(\f x ,t)}$ must be supported on $\Se^{d^2-1}\times \Se^{d-1}_{\nicefrac{1}{2}}$ for $m_t$ a.\,e.~$(\f x, t) \in \Omega\times (0,T)$. 

\end{proof}

\begin{remark}[Vanishing constants in the non-quadratic part of the Oseen--Frank energy]
Due to the additional $L^\infty$-estimate in space for the director, the existence of measure-valued solutions can also be granted, in the non coercive case, when the constants $k_3$, $k_4$ or $k_5$ vanish. 
The terms of the form $|\nabla \fd d |^2 | \fd d|^2 $ can be bounded by
\begin{align*}
\left \| \nabla \fd d | \fd d |\right \|_{L^{\nicefrac{8}{3}}(\Le)} \leq c 
\| \nabla \fd d\|_{L^\infty(\Le)}^2 \| \f d \|_{L^{\nicefrac{8}{3}}(\f L^\infty )}^2 \,. 
\end{align*}
The convergence result of Proposition~\ref{lem:meas} still holds true. But due to the lack of $L^\infty$ regularity in time, the result 	of Proposition~\ref{lem:timemeas} is not valid any more.
The associated energy-inequality~\eqref{energyin} fails to hold and consequently, the associated weak strong uniqueness is not valid any more.

\end{remark}
\subsection{Energy inequality}

\begin{proposition}[Energy inequality]\label{thm:geeignet}
Let the assumptions of Theorem~\ref{thm:meas} and additionally Parodi's relation $ (\mu_2+\mu_3)=\lambda$ (see~\eqref{parodi}) be fulfilled. Then there exists a measure-valued solution 
to the Ericksen--Leslie equations (see Definition~\ref{def:meas}), which satisfies the energy inequality
 \begin{align}
\begin{split}
 &\frac{1}{2}\|\f v (t)\|_{\Le}^2 + \ll{\nu_t, F} + \ll{\mu_t , 1 }   + \intt{(\mu_1+\lambda^2)\|\f d\cdot \sy{v}\f d\|_{L^2}^2 +
  \mu_4 \|\sy{v}\|_{\Le}^2 }  \\
& +\intt{( \mu_5+\mu_6-\lambda^2)\|\sy{v}\f d\|_{\Le}^2  +  \|\f d \times \f q\|_{\Le}^2}
\\
& \qquad\qquad\qquad \leq  \left ( \frac{1}{2}\|\f v_0 \|_{\Le}^2 + \F( \f d_0)\right )
 + \intte{\langle \f g , \f v \rangle 
 }\, .
\end{split}
\label{energyin}
\end{align}
The time derivatives of the measure-valued solution possess the  regularity 
 $$\t \f v \in L^2(0,T; ( \f W^{1,3}_{0,\sigma})^*)\qquad\text{and}\qquad \t \f d \in L^2(0,T;\f L^{\nicefrac{3}{2}})\,.$$ 
\end{proposition}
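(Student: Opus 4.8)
The plan is to obtain the energy‑inequality solution as the limit, for $\delta\to 0$, of the regularized solutions $(\fd v,\fd d)$ constructed in Section~\ref{sec:convmeas}, now coupled through the penalization exponent $\varepsilon=\delta^{\nicefrac{7}{3}}$ so that Proposition~\ref{prop:apri2an} and its consequences (in particular the $\f L^\infty$‑in‑space control of $|\fd d|^2-1$) are available. Parodi's relation~\eqref{parodi} is used precisely to kill the indefinite term $((\mu_2+\mu_3)-\lambda)(\f q_{n,\delta},\syn v\fn d)$ on the right of the discrete energy equality~\eqref{entro1}, turning it into a genuine energy balance. Passing to the limit $n\to\infty$ in~\eqref{entro1}: the strong convergence $\fn d\to\fd d$ in $L^q(0,T;\Hc)$ from Proposition~\ref{lem:limits} makes $\F_\delta(\fn d(t))\to\F_\delta(\fd d(t))$ for a.e.~$t$; the nonnegative quadratic dissipation terms are weakly lower semicontinuous along~\eqref{wr:v}, \eqref{wr:Dd}, \eqref{wr:dDd}, \eqref{wr:E}; and $P_n\f v_0\to\f v_0$ in $\Ha$, $R_n\f d_0\to\f d_0$ in $\Hc$. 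This gives, for a.e.~$t$,
\begin{equation*}
\begin{split}
&\tfrac12\|\fd v(t)\|_{\Le}^2+\F_\delta(\fd d(t))+\int_0^t\!\big(\mu_1\|\fd d\cdot\syd v\fd d\|_{L^2}^2+\mu_4\|\syd v\|_{\Le}^2+(\mu_5+\mu_6-\lambda^2)\|\syd v\fd d\|_{\Le}^2+\|\fd q\|_{\Le}^2\big)\de s\\
&\qquad\le\tfrac12\|\f v_0\|_{\Le}^2+\F_\delta(\f d_0)+\int_0^t\!\langle\f g,\fd v\rangle\de s .
\end{split}
\end{equation*}

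Next I would send $\delta\to 0$, testing this inequality against an arbitrary nonnegative $\phi\in\C_c^\infty((0,T))$, integrating in $t$, passing to the limit, and recovering the pointwise inequality a.e.\ from the arbitrariness of $\phi$ together with the desintegrations $m=m_t\otimes\de t$ (Proposition~\ref{lem:timemeas}) and $\mu=\mu_t\otimes\de t$ (Theorem~\ref{thm:defectmeas}). On the right, $\F_\delta(\f d_0)=\F(\f d_0)+\tfrac\delta2\|\Lap\f d_0\|_{\Le}^2\to\F(\f d_0)$ (the penalization vanishes since $|\f d_0|=1$) and $\int_0^t\langle\f g,\fd v\rangle\to\int_0^t\langle\f g,\f v\rangle$ by~\eqref{wr:v}. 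On the left I would split $\F_\delta(\fd d(t))=\tfrac\delta2\|\Lap\fd d(t)\|_{\Le}^2+\int_\Omega F(\fd d(t),\nabla\fd d(t))+\tfrac1{4\varepsilon}\||\fd d(t)|^2-1\|_{\Le}^2$, discard the last nonnegative term, send $\tfrac\delta2\|\Lap\fd d(t)\|^2\to\ll{\mu_t,1}$ via Theorem~\ref{thm:defectmeas} (using $\gamma_0(\Lap\fd d)=0$ to replace $|\Lap\fd d|^2$ by $|\nabla^2\fd d|^2$ up to an $O(\sqrt\delta)$ boundary defect, as in the proof of that theorem), and send $\int_\Omega F(\fd d(t),\nabla\fd d(t))\to\ll{\nu_t,F}$ via Theorem~\ref{thm:young}: indeed $F\in\mathcal R$, its transform being obtained from~\eqref{tensoren} by multiplying the $\f\Lambda$‑part by $1-|\tilde{\f h}|^2$ and leaving the $\f\Theta$‑part unchanged (Remark~\ref{rem:trs}), hence continuous up to the boundary. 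The three dissipation terms pass by weak lower semicontinuity through~\eqref{wr:dDd}, \eqref{wr:v}, \eqref{wr:Dd}.

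The term $\int_0^t\|\fd q\|_{\Le}^2$ is the delicate one, and it is where the coefficient of $\|\f d\cdot\sy{v}\f d\|^2$ acquires the extra $\lambda^2$. By~\eqref{wr:E}, $\fd q\rightharpoonup\ov{\f q}$ in $L^2(0,T;\Le)$, so $\liminf_{\delta\to0}\int_0^t\|\fd q\|^2\ge\int_0^t\|\ov{\f q}\|^2$. Since $|\fd d\times\fd q|,|\fd d\cdot\fd q|\le|\fd d|\,|\fd q|$, both $\fd d\times\fd q$ and $\fd d\cdot\fd q$ are bounded in $L^2(Q)$; by strong–weak convergence their weak limits are $\f d\times\ov{\f q}$ and $\f d\cdot\ov{\f q}$, while Proposition~\ref{prop:qtilde} identifies the former with $\f d\times\f q$. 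From the regularized director equation~\eqref{eq:dir}, $\fd q=-\fd e-\lambda\syd v\fd d$ with $\fd e$ as in~\eqref{edel}; using $\fd d\cdot\skd v\fd d=0$ one has $\fd d\cdot\fd e=\tfrac12\t|\fd d|^2+\tfrac12(\fd v\cdot\nabla)|\fd d|^2\to0$ in $\mathcal D'(Q)$ because $|\fd d|^2-1\to0$ in $L^\infty(0,T;\Le)$ and $\fd v$ is bounded in $L^2(0,T;\f L^6)$, so with~\eqref{wr:dDd} we get $\f d\cdot\ov{\f q}=-\lambda(\f d\cdot\sy{v}\f d)$. As $|\f d|=1$ a.e., the Lagrange identity $|\ov{\f q}|^2=|\f d\times\ov{\f q}|^2+|\f d\cdot\ov{\f q}|^2$ gives $\int_0^t\|\ov{\f q}\|^2=\int_0^t\|\f d\times\f q\|^2+\lambda^2\int_0^t\|\f d\cdot\sy{v}\f d\|^2$; absorbing $\lambda^2\|\f d\cdot\sy{v}\f d\|^2$ into the $\mu_1$‑term produces the coefficients $(\mu_1+\lambda^2)$ and $(\mu_5+\mu_6-\lambda^2)$ of~\eqref{energyin}, and collecting all the limits yields~\eqref{energyin} a.e.\ The time‑derivative regularity $\t\f v\in L^2(0,T;(\f W^{1,3}_{0,\sigma})^*)$, $\t\f d\in L^2(0,T;\f L^{\nicefrac{3}{2}})$ is part of Definition~\ref{def:meas} and is read off from~\eqref{apri3} and the limit equations exactly as in the proof of Theorem~\ref{thm:meas}.

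I expect the main obstacle to be the pointwise‑in‑$t$ control of the energy: Theorem~\ref{thm:young} only yields convergence of the space–time integral, so the pointwise balance must be produced by the $\phi$‑testing argument above, and — crucially — the two defect contributions must be kept disjoint, the $\delta$‑biharmonic piece feeding $\ll{\mu_t,1}$ and the quartic part of $F$ feeding the concentration part of $\ll{\nu_t,F}$ through the recession function $\tilde F$. Verifying that $F\in\mathcal R$ (so that Theorem~\ref{thm:young} holds as an equality, not merely a lower bound) and that the $\Lap$‑versus‑$\nabla^2$ boundary defect is negligible are the technical heart; the $\f L^\infty$‑in‑space estimate for $|\fd d|^2-1$ from Proposition~\ref{prop:apri2an} (hence the choice $\varepsilon=\delta^{\nicefrac{7}{3}}$) is what makes these, as well as the facts $\fd d\cdot\fd e\to0$ and $|\f d|=1$ used above, go through cleanly.
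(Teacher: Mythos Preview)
Your proof is correct and follows essentially the same route as the paper: pass to the limit $n\to\infty$ in~\eqref{entro1} (with Parodi killing the cross term), then $\delta\to0$ via $\phi$-testing and Theorems~\ref{thm:young}/\ref{thm:defectmeas} for the energy terms, weak lower semicontinuity for the dissipation, and finally the Lagrange decomposition $\|\ov{\f q}\|^2=\|\f d\times\ov{\f q}\|^2+\|\f d\cdot\ov{\f q}\|^2$ combined with $\f d\cdot\ov{\f q}=-\lambda\,\f d\cdot\sy v\f d$ obtained by testing the director equation with $\fd d\phi$. One small point: your invocation of the choice $\varepsilon=\delta^{7/3}$ and Proposition~\ref{prop:apri2an} is unnecessary here --- the paper's proof uses only the basic penalization bound $\||\fd d|^2-1\|_{L^\infty(L^2)}\le c\sqrt{\varepsilon}\to0$ from~\eqref{apri3}, which already suffices for $|\f d|=1$ a.e.\ and for $\fd d\cdot\fd e\to0$ after integration by parts (since $\di\fd v=0$).
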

\begin{proof}
The existence of measure-valued solutions follows from Theorem~\ref{thm:meas}. It is sufficient to show the energy inequality. 
Consider the inequality~\eqref{entro1}. Due to Parodi's relation, the last term on the right-hand side vanishes.
Passing to the limit in the approximate Galerkin space and using the weak lower semi-continuity of the appearing norms gives
\begin{align}
\begin{split}
&\frac{1}{2 }\| \fd v(t) \| _{\Le}^2   
+ \frac{\delta}{2} \| \Lap \fd d(t)\|_{\Le}^2
 + \F(\fd d (t)) 
  + \frac{1}{4\varepsilon}\left  \| | \fd d(t) |^2-1\right \|_{L^2}^2+  \int_0^t \mu_4 \|\syd {v}\|_{\Le}^2\de s \\&+\int_0^t\mu_1\|\fd d\cdot \syd{v}\fd d\|_{L^2}^2
 +( \mu_5+\mu_6-\lambda(\mu_2+\mu_3))\|\syd{v}\fd d\|_{\Le}^2+ \|\fd q\|_{\Le}^2 \de s \\  &\qquad\qquad
\leq   \frac{1}{2}\| \f v_0\|_{\Le}^2 + \frac{\delta}2 \| \Lap \f d_{0}\|_{\Le}^2 + \int_{\Omega} F(\f d_{0} , \nabla \f d_0)  \de s  + \frac{1}{4\varepsilon} \left \| | \f d_0 |^2-1\right \|_{L^2}^2 + \int_0^t  \langle \f g , \fd v \rangle\de s  \,.
\end{split}
\label{ungleichung}
\end{align}
On the right-hand side of the above inequality, the initial values $(\f v_0,\f d_0)$ are inserted. This can be done due to the strong convergences
\begin{align*}
P_n \f v_0 \ra \f v_0 \quad\text{in }\Ha \quad \text{and} \quad R_n \f d_o \ra \f d_0 \quad\text{in }\Hc\,.
\end{align*}
For the limiting process in the nonlinear energy, we refer to the calculations in Proposition~\ref{prop:apri}.

The aim is now to pass to the limit for vanishing regularization in the above inequality~\eqref{ungleichung}.
The penalisation-term on the right hand side of~\eqref{ungleichung} vanish since $\f d_0$ has norm one a.\,e.~and the penalization term on the left-hand side of~\eqref{ungleichung} can be estimated from below by zero. Since $\|\Lap \f d_0\|_{\Le}\leq c$, we get $\delta \| \Lap \f d_0 \|_{\Le}^2 \ra 0$ for $\delta \ra 0$.  

   For positive smooth functions  $ \phi \in \C_c^\infty ( 0,T)$ with $\phi (t) \geq 0$ for all $t\in [0,T]$
it follows from Theorem~\ref{thm:young} that
\begin{align*}
\lim _{\delta\ra 0} \int_0^T \phi(t) \left ( \frac{\delta}{2} \| \Lap \fd d \|_{\Le}^2+ \F(  \fdk d (t)) \right ) \de t  &= 
   \int_0^T \phi(t)\left (\frac{1}{2}\ll{\mu_t, 1}+  \ll{\nu_t , F }  \right ) \de t\,.
\end{align*}
The fundamental lemma of variational calculus gives
 \begin{align*}
\lim _{\delta\ra 0}  \left ( \frac{\delta}{2}\| \Lap \fdk d (t) \|_{\Le}^2+ \F(  \fdk d (t))\right ) =   \frac{1}{2}\ll{\mu_t, 1} +  \ll{\nu_t , F }\, \quad\text{a.e.~in $(0,T)$.} 
\end{align*} 

With the weak convergence of the appearing sequences and the  weak-lower semi-con\-tin\-u\-ity of the appearing norms, we can pass to the limit in the regularisation parameter and attain
 \begin{align}
\begin{split}
 &\frac{1}{2}\|\f v (t)\|_{\Le}^2 + \ll{\nu_t, F} + \ll{\mu_t , 1 }   + \int_0^t\left ({\mu_1\|\f d\cdot \sy{v}\f d\|_{L^2}^2 +
  \mu_4 \|\sy{v}\|_{\Le}^2 } \right ) \de s \\
& +\int_0^t\left ({( \mu_5+\mu_6-\lambda(\mu_2+\mu_3))\|\sy{v}\f d\|_{\Le}^2  +  \| \f q\|_{\Le}^2}\right )\de s
\\
& \qquad\qquad\qquad \leq  \left ( \frac{1}{2}\|\f v_0 \|_{\Le}^2 + \F( \f d_0)\right )
 + \int_0^t{\langle \f g , \f v \rangle 
 }\de s\, .
\end{split}
\label{energyin2}
\end{align}

Testing the director equation of the regularized system with $\f d \phi$, where $\phi\in \C^\infty_c(\Omega\times (0,T))$, gives
\begin{align*}
\int_0^t \frac{1}{2}( \t | \fd d (t) |^2 + \fd v(t) \cdot  \nabla | \fd d(t) |^2 , \phi(t) ) \de t  + \int_0^t  ( \lambda\fd d  (t) \cdot ( \nabla \fd v (t))_{\sym} \fd d(t)+ \f q(t) \cdot \fd d(t) , \phi(t))  \de t = 0 \,.  
\end{align*} 
Using two integrations by parts and due to the fact that the weak derivative of a constant function is zero, we get
\begin{align*}
- \int_0^t \frac{1}{2}\left ((  | \fd d (t) |^2-1, \t \phi(t) )   +  ( | \fd d(t) |^2-1  , \di ( \fd v (t) \phi(t))  )\right ) \de t  \ra 0 
\end{align*} 
for $\delta\ra0$ since 
for vanishing regularization, we already established that $| \f d|=1$ a.\,e.~in $\Omega\times (0,T)$. 
 Thus, it holds
\begin{align*}
\int_0^t  ( \lambda\f d (t) \cdot ( \nabla \f v (t))_{\sym} \f d(t)+ \f q(t) \cdot \f d(t) , \phi(t))  \de t = 0 \,
\end{align*}
for all $\phi \in \C^\infty_c(\Omega\times (0,T))$. Since the above terms are in $L^1(\Omega\times (0,T))$
the equality holds a.\,e.~in $\Omega\times (0,T)$. The a priori estimate~\eqref{entrodiss} implies that both terms are bounded in $L^2(0,T;\Le)$ and their norms must  coincide,
\begin{align}
\|\lambda  \f d \cdot \sy v \f d \|_{L^2(\Le)} = \| \f q \cdot \f d \|_{L^2(\Le)} \,.\label{eq:1}
\end{align}
Since $|\f d |=1 $ a.\,e.~in $\Omega\times (0,T)$, we conclude 
\begin{align}
\| \f q \|_{\Le} ^2 = ( \f q , \f q ) = ( \f q , | \f d |^2 \f q) = \left ( \f q , \left (| \f d|^2 I - \f d \otimes \f d \right ) \f q \right ) + \left ( \f q \cdot \f d , \f q \cdot \f d \right ) = \| \f d \times \f q \|_{\Le}^2  + \| \f q \cdot \f d \|_{L^2}^2\,.\label{eq:2}
\end{align}
Inserting \eqref{eq:1} and~\eqref{eq:2} into~\eqref{energyin2} gives the asserted energy inequality~\eqref{energyin}. 

The estimate~\eqref{esttime}, the weak convergences~\eqref{rtimev} and \eqref{rtimed} and the weak-lower semi-continuity of the norms give the asserted regularity of the time derivatives.

\end{proof}

\addcontentsline{toc}{section}{References}

\small
\bibliographystyle{abbrv}

\end{document}